\newtheorem{thm}{Theorem}[subsection]
\newtheorem{lemma}[thm]{Lemma}
\newtheorem{prop}[thm]{Proposition}
\newtheorem{add}[thm]{Addendum}
\newtheorem{cor}[thm]{Corollary}
\theoremstyle{remark}
\newtheorem{fact}[thm]{Fact}
\newtheorem{remark}[thm]{Remark}
\theoremstyle{definition}
\newtheorem{defi}[thm]{Definition}
\newtheorem{nota}[thm]{}
\newtheorem{example}[thm]{Example}
\newcommand{\la}{\longrightarrow}
\newcommand{\ov}{\overline}
\newcommand{\Div}{\operatorname{Div}}
\newcommand{\Spec}{\operatorname{Spec}}
\newcommand{\Pic}{\operatorname{Pic}}
\newcommand{\mdeg}{\operatorname{{\underline{de}g}}}
\def\B{\mathcal B}
\def\L{\mathcal L}
\def\O{\mathcal O}
\def\X{\mathcal X}
\newcommand{\Z}{\mathbb{Z}}
\def\md{\underline{d}}
\def\mg{\underline{g}}
\def\mo{\underline{0}}
\newcommand{\g}{\gamma}
\newcommand{\dd}{\delta}
\newcommand{\pr}[1]{\mathbb{P}^{#1}}
\newcommand{\Xn}{X^{\nu}}
\newcommand{\Mgb}{\ov{M}_g}
\newcommand{\Hgb}{\ov{H}_g}
\def\PXb{\overline{P^d_X}}
\def\FM{F_{M}(X)}
\def\FL{F_{(L_1,L_2)}(X)}
\newcommand{\picX}[1]{\Pic^{#1}X}
\newcommand{\sing}{X_{\text{sing}}}
\newcommand{\sep}{X_{\text{sep}}}
\def\BXd{B_d  (X)}
\def\BX1{B_1  (X)}
\def\BXt{B_2  (X)}
\newcommand{\df}{\frac{\delta}{2}}
\newcommand{\dfZ}{\frac{\delta_Z}{2}}
\newcommand{\Zd}{Z^-_{\md}}
\newcommand{\Ys}{Y_s}
\newcommand{\Ws}{W_s}
\newcommand{\Cl}{\operatorname{Cliff}}
\begin{document}
\begin{center}
{\bf\large Linear series on semistable curves.}

\noindent {{Lucia  Caporaso}}\footnote{Dipartimento di Matematica, Universit\`a Roma Tre,
Largo S.L.Murialdo,  00146 Roma Italy - caporaso@mat.uniroma3.it}\

\today
\end{center}

\noindent {\it Abstract.} For a semistable curve $X$ of genus $g$,
the number    $h^0(X,L)$ is studied for  line bundles $L$ of degree $d$  parametrized by the compactified Picard scheme. The theorem of Riemann is shown to hold.
The theorem of Clifford is shown to hold  in the following cases: $X$ has two components;
$X$ is any semistable curve and $d=0$ or $d=2g-2$; $X$ is stable, free from separating nodes, and $d\leq 4$.  These results are shown to be sharp.
Applications to the Clifford index, to the combinatorial description of hyperelliptic curves, and 
to plane quintics are given.
\tableofcontents
\section{Introduction and preliminaries}

The dimension of   complete linear series on singular curves is, in general, quite difficult to control.
This is one of the reasons why several interesting degeneration problems about line bundles and linear series remain
unsolved.
For singular curves the Riemann-Roch theorem  does not yield
as strong information  as for smooth curves, and several other classical theorems   fail, as we shall illustrate.

On the other hand, it is well known that the Picard scheme of a singular curve
tends to be too large, so that   any good compactification of the
generalized jacobian   parametrizes only  
a distinguished subset of line bundles.
At  present time the geometric and functorial properties of the  compactified Picard scheme
are rather  well understood,   making
it   a  natural place to study limits of line bundles and related problems.

This is the main theme of this paper, which  investigates the dimension of complete linear series
parametrized by the compactified Picard scheme of stable curves. They correspond to so-called balanced line bundles on semistable curves (defined in \ref{bal}).

There exist other approaches to   this type of questions. Some of them are by now considered classical, like the theory 
of admissible covers, of J.Harris and D.Mumford (\cite{HM}), and the theory
of limit linear series, of D.Eisenbud and J.Harris (\cite{EH}).
Although these techniques have been successfully applied  by their creators to solve important problems, and they
have been further studied by others (\cite{bruno}, \cite{EM}, \cite{oss} for example), several open questions, some considered in the present paper, remain  open.
Our method,   applied also in \cite{Cbin},
 is different as it departs from the compactified Picard scheme and
does not use degeneration techniques.

We proceed in analogy with the classical theory of Riemann surfaces.
Our first result is Theorem~\ref{sr}, generalizing   a theorem of Riemann,
computing $h^0(X,L)$ for a balanced line bundle $L$ of large degree on a semistable curve $X$.
Although this theorem fails on infinitely many components of the Picard  scheme
of a reducible curve (see Example~\ref{Pr}), we prove that,
 quite
pleasingly, it does   hold for every balanced  line bundle, that is for every element of
the compactified Picard scheme of $X$.

We then turn to study the theorem of Clifford.  The situation is  much more complex, as this theorem turns out to fail, even for balanced line bundles,
in certain situations.
Nonetheless, we prove that Clifford's theorem  does hold in several cases. Namely,  
 it holds for all degrees on curves with two components (Theorem~\ref{clvine}).
Also it holds for all stable curves if the degree is $0$ or $2g-2$ (Theorems~\ref{cc} and \ref{c0}).
Finally,    it
holds for degree at most $4$, for all  stable curves free from separating nodes (Theorem~\ref{cl4}).  Some counterexamples are exhibited to
 show  that the result is sharp:
  the Clifford inequality fails for all positive degrees  for curves with separating nodes; furthermore
 if $d\geq 5$ then it fails even for curves free from separating nodes (see Example~\ref{cl5no}).

The last section 
is devoted to applications. For curves with two components the Clifford's theorem is valid,
 it is thus interesting to study
  their (suitably defined) Clifford's index and its connection with the gonality; we do that in Proposition~\ref{clwh}, stating that a curve is weakly hyperelliptic 
  (i.e. it admits a  balanced ${\rm {g}}^1_2$) if and only if its Clifford index is $0$.
 Next,   we focus on   weakly hyperelliptic curves, give a combinatorial characterization of them (Theorem~\ref{c2=}) and use it to describe the combinatorics of hyperelliptic curves (Proposition~\ref{hypcomb}).
     We conclude the paper with a classification of ${\rm {g}}^2_5$'s on 
 two-component curves of genus $6$ (Theorem~\ref{g25}).
 
\

\noindent
{\it Acknowledgements.} I wish to thank Edoardo Sernesi for 
 several enlightening conversations,    Edoardo Ballico and Silvia Brannetti for some precious remarks.
I am very  grateful to the referees for their careful reports correcting several inaccuracies.
 
\begin{nota}{\it Conventions.}
\label{not}
We work over any algebraically closed field.
The following notation and terminology will be used thoughout the paper.
The word ``curve" stands for reduced projective scheme of pure dimension one.
$X$ is a connected curve, having at most nodes as singularities.  
$g$ is the  arithmetic genus of $X$.
The irreducible component decomposition of $X$ is written $X=\cup_{i=1}^{\g}C_i$,
and $g_i$ is the arithmetic genus of $C_i$. 
We shall usually denote by $Z$ a (complete, reduced, of pure dimension one) subcurve of $X$, by $g_Z$ its arithmetic genus, and by
$Z^c=\ov{X\smallsetminus Z}$ its complementary curve.

Given a line bundle $L\in \Pic X$ we denote by $L_Z$ its restriction to a subcurve $Z$ of $X$.

Given two subcurves $Z, Z'$ of $X$ with no components in common, we shall denote 
\begin{equation}
\label{ZZ}
Z\cdot Z':= \#Z\cap Z'\   \  \  \text{ and }\  \  \  \dd_Z:=Z\cdot Z^c=\#Z\cap Z^c.
\end{equation}
 The formula $g=g_Z+g_{Z^c}+\dd_Z -1$ will be used several times.

Whenever we shall  decompose a curve as a union of subcurves, 
e.g. $X=Z\cup Y$, it will always be understood
that  $Z$ and $Y$   have no components in common.

$\md=(d_1,\ldots, d_{\g})$ will always be an element of $\Z^{\gamma}$, and $|\md|=\sum_1^{\g}d_i$.
By $\md \leq 0$ (resp. $\md \geq 0$) we mean that  $d_i\leq 0$ (resp. $d_i\geq 0$)
for every $i$. We denote by $\picX{\md}$,
  the set of line bundles $L$ on $X$ having multidegree $d_i=\deg_{C_i}L$
for $i=1\ldots \gamma$, and, 
for any integer $r\geq 0$ we set
$W_{\md}^r(X):=\{L\in \Pic ^{\md}X: h^0(L)\geq r+1\}$.\end{nota}

\subsection{Gluing global sections}
In this subsection, we collect several technical lemmas needed in the sequel.
\begin{nota}
\label{}
Let $\nu:Y\to X$ be some partial (possibly total)  normalization of $X$; consider the (surjective) morphism
$\nu^*:\Pic X\to \Pic Y$.
For every $M\in \Pic Y$ we will denote the fiber of $\nu^*$ over $M$  as follows
\begin{equation}
\label{FM}
\FM :=\{L\in \Pic X:\nu^*L=M\}.
\end{equation}

Let $\dd$ be the number of nodes normalized by $\nu:Y\to X$.
For each of such node, $n_i$, let $\{p_i, q_i\}=\nu^{-1}(n_i)$ be its branches. 
We represent  the above data by the self explanatory notation
\begin{equation}
\label{glu}
Y\la X=Y/_{\{p_i=q_i,\  \  i=1,\ldots,  \dd \}}.
\end{equation}
Fix  $M\in\Pic Y$ such that $h^0(Y,M)\neq 0$. Pick  $L\in \FM$; then (cf. \cite{Ctheta} 2.1.1)
\begin{equation}
\label{eleq}
h^0(Y,M)-\dd\leq h^0(X,L)\leq h^0(Y,M).
\end{equation}
To study when  $h^0(X,L)=h^0(Y,M)$
we introduce a convenient notation.
\end{nota}

\begin{defi}
\label{np}
Let $Y$ be a  curve, $M\in \Pic Y$  and $p,q$ nonsingular points of $Y$.
We say that $p$ and $q$ are a neutral pair of $M$,
and write
$
p\sim_M q, 
$
if \begin{equation}
\label{CN}
h^0(Y,M-p)=h^0(Y,M-q)=h^0(Y,M-p-q).
\end{equation} 
\end{defi}

\begin{remark}
\label{npr}
Notation as in \ref{np}. 
\begin{enumerate}[(A)]
\item
\label{npe}
The relation  $p\sim_Mq$ is an equivalence relation. 
\item
\label{npb}
 If $p$ and $q$ lie in different connected components of $Y$,   $p\sim_Mq$ if and only if $p$ and $q$ are base points of $M$. 
\item
\label{np0}
$
p\sim_{\O_Y} q 
$  if and only if $p$ and $q$ lie in the same connected component of $Y$. 
\item
\label{np1}
If $M$ is very ample, then
 $M$ has no neutral pair . 
\end{enumerate}
\end{remark}
\begin{lemma}
\label{bpl} Let 
$ 
Y=Z_1\coprod Z_2/_{\{p_i=q_i, \  i=1,\ldots, \beta\}},
$ 
where $Z_1$ and $Z_2$ are two nodal curves, and 
$p_1,\ldots, p_{\beta}$ 
(respectively $q_1,\ldots, q_{\beta}$)  smooth points of $Z_1$
(resp. of $Z_2$).
Let $M \in \Pic Y$ and let $p\in Z_1$, $q\in Z_2$ be smooth points of $Y$.
If 
$
p\sim _{M}q$ then $p$ is a base point of $M_{Z_1}(-\sum_{i=1}^{\beta}p_i)$ (and 
$q$ is a base point of $M_{Z_2}(-\sum_{i=1}^{\beta}q_i)$).
\end{lemma}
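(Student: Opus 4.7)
The plan is to argue by contrapositive, building a global section of $M$ on $Y$ that detects $p$ but vanishes at $q$, thereby violating the neutral pair equality $h^0(Y,M-q) = h^0(Y,M-p-q)$.

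First, I would set up the gluing description of sections. Writing $\nu\colon Z_1\sqcup Z_2 \to Y$ for the partial normalization at the $\beta$ nodes $n_i=\{p_i=q_i\}$, one has the short exact sequence
\begin{equation*}
0 \to M \to \nu_*(M_{Z_1}\oplus M_{Z_2}) \to \bigoplus_{i=1}^{\beta} k(n_i) \to 0,
\end{equation*}
so a global section of $M$ on $Y$ is a pair $(s_1,s_2)\in H^0(Z_1,M_{Z_1})\oplus H^0(Z_2,M_{Z_2})$ satisfying $s_1(p_i)=s_2(q_i)$ for $i=1,\ldots,\beta$ (after suitable trivializations at the nodes). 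Set $N_1:=M_{Z_1}(-\sum p_i)$ and $N_2:=M_{Z_2}(-\sum q_i)$.

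Next I would observe that any $s_1\in H^0(Z_1,N_1)$ extends by zero on $Z_2$ to a section $\tilde s_1:=(s_1,0)$ of $M$ on $Y$: indeed $s_1(p_i)=0=0(q_i)$, so the gluing condition is satisfied. Moreover, since $\tilde s_1$ vanishes identically on $Z_2\ni q$, it is in fact a section of $M(-q)$. This gives an injection
\begin{equation*}
H^0(Z_1,N_1) \ha H^0(Y,M-q),\qquad s_1\mapsto (s_1,0).
\end{equation*}

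Now suppose, for contradiction, that $p$ is not a base point of $N_1$. Pick $s_1\in H^0(Z_1,N_1)$ with $s_1(p)\neq 0$. Then $\tilde s_1=(s_1,0)$ lies in $H^0(Y,M-q)$ but does not lie in $H^0(Y,M-p-q)$, because its value at $p\in Z_1$ is $s_1(p)\neq 0$. Hence $H^0(Y,M-p-q)\subsetneq H^0(Y,M-q)$, so $h^0(Y,M-p-q)<h^0(Y,M-q)$, contradicting the neutral pair hypothesis \eqref{CN}. Therefore every section of $N_1$ vanishes at $p$, i.e.\ $p$ is a base point of $M_{Z_1}(-\sum p_i)$. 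The symmetric argument, interchanging the roles of $Z_1,Z_2$ and of $p,q$, shows that $q$ is a base point of $M_{Z_2}(-\sum q_i)$.

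There is no real obstacle here: the only subtle point is making sure the extension $(s_1,0)$ genuinely defines a global section of $M$ (not merely a section of $\nu_*(M_{Z_1}\oplus M_{Z_2})$), which is why the hypothesis that $s_1$ vanishes at all the gluing points $p_i$ is essential and forces us to work with $N_1=M_{Z_1}(-\sum p_i)$ rather than $M_{Z_1}$ itself.
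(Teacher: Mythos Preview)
Your proof is correct and follows essentially the same approach as the paper: assume $p$ is not a base point of $M_{Z_1}(-\sum p_i)$, glue a section $s_1$ with $s_1(p)\neq 0$ to the zero section on $Z_2$, and observe that the resulting section of $M$ vanishes at $q$ but not at $p$, contradicting $p\sim_M q$. Your write-up is somewhat more detailed (the exact sequence and the explicit injection into $H^0(Y,M-q)$), but the argument is the same.
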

\begin{proof}
Suppose   that $p $ is not a base point of  
$M_{Z_1}(-\sum_{i=1}^{\beta}p_i)$. Then there exists $s_1\in H^0(Z_1,M_{Z_1}(-\sum_{i=1}^{\beta}p_i))$
such that $s_1(p)\neq 0$. Since $s_1$ vanishes at $p_i$ for $i\leq \beta$, $s_1$ can be glued to
the zero section in $H^0(Z_2, M_{Z_2})$, to give a section $s\in H^0(Y,M)$.
By construction,
$s(p)\neq 0$
and $s(q)=0$. Therefore $p\not\sim _{M}q$.  
\end{proof}

The next Lemma follows trivially from Lemmas 2.2.3 and  2.2.4 in \cite{Ctheta}.
\begin{lemma}
\label{d1l} Let $Y$ be a  nodal curve,  $p$ and $q$ two nonsingular points of $Y$ and  $Y\to
X=Y/_{\{p=q\}}$. 
Let $M\in \Pic Y$  be such that  $h^0(Y,M)\neq 0$.  

There exists $L\in \FM$ such that
$h^0(X,L)=h^0(Y,M)$ if and only if 
$ 
p\sim_M q.
$ 
If  $Y$ is connected, such an $L$   is unique (if it exists) if and only if 
$p$ and $q$ are not base points for $M$.
   \end{lemma}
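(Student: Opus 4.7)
The plan is to analyze the short exact sequence on $X$ obtained from the inclusion $\O_X \hookrightarrow \nu_*\O_Y$ at the node $n=\nu(p)=\nu(q)$:
\[
0 \la L \la \nu_* M \la k(n) \la 0,
\]
where $k(n)$ is the skyscraper at $n$. Since $\nu$ is finite, $H^i(X,\nu_*M)=H^i(Y,M)$, so taking cohomology gives
\[
0 \la H^0(X,L) \la H^0(Y,M) \xrightarrow{\phi_\lambda} k,
\]
which already forces $h^0(X,L)\in\{h^0(Y,M)-1,\, h^0(Y,M)\}$, with equality precisely when $\phi_\lambda\equiv 0$. Elements $L\in\FM$ are parametrized (after trivializing $M_p$ and $M_q$) by a gluing scalar $\lambda\in k^*$, and unwinding the construction shows that, up to a nonzero scalar, $\phi_\lambda(s)=s(q)-\lambda\, s(p)$: a section of $M$ descends to $L=L_\lambda$ exactly when $s(q)=\lambda\, s(p)$.

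For the first equivalence, assume $p\sim_M q$. If $p$ and $q$ are both base points of $M$, then $s(p)=s(q)=0$ for every $s\in H^0(Y,M)$, so $\phi_\lambda\equiv 0$ for \emph{every} $\lambda\in k^*$. If neither is a base point, the equalities $h^0(M-p)=h^0(M-q)=h^0(M-p-q)$ force $H^0(M-p)=H^0(M-q)$ as a common codimension-one subspace of $H^0(Y,M)$; fixing any $s_0\in H^0(M)\setminus H^0(M-p)$ we have $s_0(p),s_0(q)\neq 0$, so $\lambda_0:=s_0(q)/s_0(p)\in k^*$ is well-defined and $\phi_{\lambda_0}$ kills both $s_0$ and $H^0(M-p)=H^0(M-q)$, hence all of $H^0(Y,M)$. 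Conversely, if $\phi_\lambda\equiv 0$ for some $\lambda\in k^*$, then $p$ is a base point iff $q$ is (using $\lambda\neq 0$); and in the non-base-point case the identity $s(q)=\lambda s(p)$ for all $s$ gives $H^0(M-p)\subseteq H^0(M-q)$ and conversely, so these spaces agree and $p\sim_M q$.

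For the uniqueness claim, assume $Y$ connected. If $p,q$ are not base points, then the ratio $s(q)/s(p)$ is constant on $H^0(M)\setminus H^0(M-p)$ (any two such sections differ by an element of $H^0(M-p)=H^0(M-q)$, which vanishes at both points), so $\lambda_0$ is uniquely determined and $L$ is unique. If instead $p$ (hence also $q$) is a base point of $M$, then $\phi_\lambda\equiv 0$ for every $\lambda\in k^*$, so all of $\FM\cong k^*$ realizes $h^0(X,L)=h^0(Y,M)$ and $L$ is not unique. The only delicate point is the identification of $\phi_\lambda$ with $s\mapsto s(q)-\lambda\, s(p)$, which is the standard descent description of $L_\lambda$ as a gluing of $M$ along the two branches of $n$; once this is in hand, both parts reduce to the linear algebra above.
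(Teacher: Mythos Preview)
Your argument is correct and self-contained. The paper does not prove this lemma directly: it simply invokes Lemmas~2.2.3 and~2.2.4 of \cite{Ctheta}, so your direct proof via the exact sequence $0\to L\to\nu_*M\to k(n)\to 0$ and the explicit description $\phi_\lambda(s)=s(q)-\lambda\,s(p)$ is a genuine addition rather than a paraphrase. The case analysis for the first equivalence is clean, and you correctly observe that under $p\sim_M q$ either both of $p,q$ are base points or neither is.

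One small slip in the uniqueness argument: you write that ``any two such sections differ by an element of $H^0(M-p)=H^0(M-q)$'', but two elements of $H^0(M)\setminus H^0(M-p)$ need not have their difference in $H^0(M-p)$. What is true is that any $s_1\notin H^0(M-p)$ can be written as $c\,s_0+t$ with $c\in k^*$ and $t\in H^0(M-p)$ (codimension one), whence $s_1(q)/s_1(p)=c\,s_0(q)/(c\,s_0(p))=s_0(q)/s_0(p)$. More directly: if $\phi_{\lambda_0}\equiv 0$ and $\phi_{\lambda_1}\equiv 0$, evaluating at any $s_0$ with $s_0(p)\neq 0$ gives $\lambda_0 s_0(p)=s_0(q)=\lambda_1 s_0(p)$, so $\lambda_0=\lambda_1$. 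Either fix completes the argument. Note also that your appeal to $\FM\cong k^*$ in the base-point case is exactly where connectedness of $Y$ is used (if $Y$ were disconnected with $p,q$ in different components, $\FM$ would reduce to a single point and uniqueness would be automatic), which matches the hypothesis in the statement.
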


\begin{lemma}
\label{d2} Let 
$ 
Y=Z_1\coprod Z_2\to X=Y/_{\{p_i=q_i, \  i=1,\ldots, \dd\}},
$ 
where $p_1,\ldots, p_{\dd}$ 
(respectively $q_1,\ldots, q_{\dd}$)  are non singular points of $Z_1$
(resp. of $Z_2$).
Let $M=(M_1,M_2)\in \Pic Z_1\times \Pic Z_2=\Pic Y$; assume   $h^0(Y, M)\geq 2$, and $p_i\not\sim_Mq_i$ $\forall
i$. Then there exists $L\in F_M(X)$ such that $h^0(X,L)= h^0(Y, M)-1$
if and only if
$$
p_i\sim _{M_1}p_j \  \text{ and }\  q_i\sim _{M_2}q_j,\  \  \forall i,j.
$$
\end{lemma}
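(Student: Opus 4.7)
The plan is to convert the problem into linear algebra at the gluing points. Given $L\in\FM$, a choice of gluing isomorphisms $\lambda_i\colon (M_1)_{p_i}\simeq (M_2)_{q_i}$ identifies $H^0(X,L)$ with the kernel of the linear map
\[
\phi_\lambda\colon H^0(Z_1,M_1)\oplus H^0(Z_2,M_2)\longrightarrow k^{\dd},\qquad (s_1,s_2)\longmapsto \bigl(s_1(p_i)-\lambda_i\,s_2(q_i)\bigr)_{i=1}^{\dd}.
\]
Writing $V_1\subset k^{\dd}$ for the image of $s_1\mapsto(s_1(p_i))_i$ and $V_2^{\lambda}\subset k^{\dd}$ for the image of $s_2\mapsto(\lambda_i\,s_2(q_i))_i$, one has $\im\phi_\lambda=V_1+V_2^{\lambda}$, and therefore $h^0(X,L)=h^0(Y,M)-\dim(V_1+V_2^{\lambda})$. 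The desired equality $h^0(X,L)=h^0(Y,M)-1$ thus amounts to $\dim(V_1+V_2^{\lambda})=1$.

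The technical heart of the argument is the following characterization, which I would establish by unwinding Definition~\ref{np}: the condition $p_i\sim_{M_1}p_j$ for all $i,j$ is equivalent to either (a) every $p_i$ is a base point of $M_1$, in which case $V_1=0$, or (b) no $p_i$ is a base point of $M_1$, $\dim V_1=1$, and the generator of $V_1$ has no zero coordinate. Indeed, if some $p_i$ is a base point while some $p_j$ is not, then $h^0(Z_1,M_1-p_i)=h^0(Z_1,M_1)\neq h^0(Z_1,M_1)-1=h^0(Z_1,M_1-p_j)$, violating neutrality; and in the no-base-point case, neutrality forces every section vanishing at $p_i$ to vanish at $p_j$ as well, which is exactly $\dim V_1=1$ with nowhere-vanishing generator. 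The analogous statement with $Z_2,M_2,q_i,V_2$ in place of $Z_1,M_1,p_i,V_1$ follows identically. I expect this characterization to be the main obstacle; once it is in place, the rest is formal.

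For the ``if'' direction, assume both neutrality conditions hold. The hypothesis $p_i\not\sim_M q_i$ rules out $V_1=V_2=0$, for otherwise each $p_i$ and $q_i$ would be a base point of $M$ on the disconnected curve $Y$, hence neutral by Remark~\ref{npr}(\ref{npb}). If exactly one of $V_1,V_2$ is zero, any $\lambda$ works, since the other is automatically $1$-dimensional. Otherwise both $V_1,V_2$ are $1$-dimensional with generators $v_1,v_2$ having no zero coordinate, and setting $\lambda_i:=v_{1,i}/v_{2,i}$ yields $V_2^{\lambda}=V_1$; in every case $\dim(V_1+V_2^{\lambda})=1$. For the ``only if'' direction, suppose some $\lambda$ achieves $\dim(V_1+V_2^{\lambda})=1$. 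Then $\dim V_1,\dim V_2\leq 1$, and the common line $V_1+V_2^{\lambda}$ cannot have a zero coordinate: a zero at position $i$ would make $p_i$ a base point of $M_1$ and $q_i$ a base point of $M_2$, forcing $p_i\sim_M q_i$ once more by Remark~\ref{npr}(\ref{npb}) and contradicting the hypothesis. The characterization then delivers the required pairwise neutrality of the $p_i$'s with respect to $M_1$ and of the $q_i$'s with respect to $M_2$.
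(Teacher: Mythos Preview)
Your proof is correct and takes a genuinely different route from the paper's. The paper argues by induction on $\dd$: it glues one node to obtain an intermediate curve $Y'$, uses Lemma~\ref{d1l} to drop $h^0$ by one, and then invokes Lemma~\ref{bpl} to decide whether the next branch point is neutral for the partially glued bundle; the general case is reduced to $\dd=2$ via Remark~\ref{npr}(\ref{npe}). Your approach instead linearizes the problem in one stroke, identifying $h^0(X,L)$ with the codimension of $V_1+V_2^{\lambda}$ in $H^0(Y,M)$ and translating the neutrality hypotheses into the dichotomy ``$V_i=0$ or $V_i$ is a line with no zero coordinate''. This bypasses the inductive bookkeeping and the auxiliary Lemma~\ref{bpl} entirely, at the cost of having to verify that dichotomy directly from Definition~\ref{np}; the paper's approach, by contrast, reuses its earlier lemmas and makes the step-by-step gluing picture explicit, which is convenient elsewhere in the paper (e.g.\ in the proof of Lemma~\ref{e}). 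Both arguments ultimately hinge on the same observation---that under the non-neutrality hypothesis $p_i\not\sim_M q_i$, the evaluation data at the $i$-th node cannot vanish simultaneously on both sides---but your formulation makes this transparent as a statement about coordinates of the generating vector of $V_1+V_2^{\lambda}$.
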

\begin{proof}
If $\dd=1$ we have $F_M(X)=\{L\}$ and our assumption $p_1\not\sim_Mq_1$ implies,
by Lemma~\ref{d1l}, that $h^0(X,L)= h^0(Y, M)-1$. From now on we let  $\dd \geq 2$.
 Assume first $\dd=2$.
 Denote 
$ Y'=Y/_{\{p_1=q_1\}},
$  and let $M'\in \Pic Y'$ be the (unique) line bundle corresponding to $M$.
As we just said,  Lemma~\ref{d1l} yields
$$
h^0(Y',M')=h^0(Y, M)-1.
$$
Suppose    $p_2\not\sim_{M_1}p_1$,
Then there is  $s_1\in H^0(Z_1,M_1)$   vanishing at $p_1$ but not at $p_2$. 
Hence $p_2$ is not a base point of $M_1(-p_1)$. By Lemma~\ref{bpl} we have
$ 
p_2\not\sim_{M'}q_2,
$, 
hence by Lemma~\ref{d1l}, for every $L\in F_{M'}(X)$ we have  $h^0(X,L)\leq h^0(Y',M')-1=h^0(Y, M)-2$. 

Conversely, assume $p_2\sim_{M_1}p_1$ and $q_2\sim_{M_2}q_1$.
 We claim that $p_2\sim _{M'}q_2$. 
Indeed, pick $s\in H^0(Y',M')$ such that $s(p_2)=0$. Call $s_i$ the restriction of $s$ to $Z_i$. Then
$s_1\in H^0(Z_1,M_1)$, hence $s_1(p_1)=0$ by hypothesis. Therefore $s_2(q_1)=0$. Finally, as 
$q_2\sim_{M_2}q_1$, we get $s_2(q_2)=0$, hence $s(q_2)=0$. So  $p_2\sim _{M'}q_2$.

By Lemma~\ref{d1l} this implies that there  exists $L\in F_{M'}(X)$
 such that $h^0(X,L)=h^0(Y',M')=h^0(Y, M)-1$, so we are
done.

If $\dd \geq 3$, we just apply the previous argument by replacing $p_2, q_2$ with $p_i, q_i$,\   $i\geq 3$,
and use  Remark~\ref{npr} (\ref{npe}).
\end{proof}
\begin{fact}
\label{cl00}
Let $X$ be   connected, and
assume  $\md =\mo=(0,\ldots, 0)$. Then for every
$L\in \picX{\mo}$ we have $h^0(X,L)\leq 1$ and equality holds  if and only if $L=\O_X$
(Corollary 2.2.5 of \cite{Ctheta}).
\end{fact}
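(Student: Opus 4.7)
The plan is to reduce everything to the statement: a nonzero global section of a multidegree $\mo$ line bundle forces $L\cong \O_X$. The rest (including $h^0\leq 1$) then follows from $h^0(X,\O_X)=1$, which holds because $X$ is connected.

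First I would take $L\in \picX{\mo}$ with $h^0(X,L)\geq 1$ and pick a nonzero section $s\in H^0(X,L)$. Decompose $X=Z\cup W$ where $Z$ is the union of the components $C_i$ on which the restriction $s|_{C_i}$ is not identically zero, and $W$ is the union of the components on which $s$ vanishes identically (so $Z$ is nonempty since $s\neq 0$). On each component $C_i\subset Z$ the restriction $L|_{C_i}$ has degree $0$ and admits the nonzero section $s|_{C_i}$; since $C_i$ is irreducible, the divisor of zeros of $s|_{C_i}$ is effective of degree $0$, hence zero. Thus $s$ is nowhere vanishing on every component of $Z$.

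The key step is to rule out $W\neq \emptyset$. If $W$ were nonempty, then by connectedness of $X$ there would exist a node $n\in Z\cap W$. Evaluating $s$ at $n$ from the $Z$-side gives a nonzero value (by the previous step), while evaluating from the $W$-side gives $0$, contradicting the fact that $s$ is a well-defined section at the node. Hence $W=\emptyset$ and $s$ is nowhere zero on all of $X$. The map $\O_X\to L$ defined by $s$ is then an isomorphism, so $L\cong \O_X$.

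Finally, since $X$ is connected we have $h^0(X,\O_X)=1$, so any $L\in \picX{\mo}$ with $h^0(X,L)\geq 1$ satisfies $L=\O_X$ and $h^0(X,L)=1$; both assertions of the Fact follow at once. The only delicate point is the gluing argument at the node $n\in Z\cap W$, but this is essentially forced by the definition of a node as identifying two branches into a single stalk; there are no real calculations to grind through.
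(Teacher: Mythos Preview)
The paper does not actually prove this fact; it simply cites Corollary~2.2.5 of \cite{Ctheta}. Your argument is correct and is essentially the standard self-contained proof: a nonzero section of a multidegree-$\mo$ line bundle must be nowhere vanishing on each component where it is nonzero, and connectedness then forces it to be nowhere vanishing globally, giving $L\cong\O_X$.
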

The following easy observation will be applied several times.
\begin{remark}
\label{ur}
{\it Let $X=V\cup Z$ and $L\in \picX{\md}$; assume that $\md_Z=(0,\ldots, 0)$. Then
$h^0(X,L)\leq h^0(V, L_V)$.}

Indeed, let $Z=Z_1\coprod \ldots \coprod Z_c$ be the connected component decomposition of $Z$.
Then, by Fact~\ref{cl00}, $h^0(Z_i, L_{Z_i})\leq 1$ and equality holds  if and only if $ L_{Z_i}= \O_{Z_i}$,
in which case $L_{Z_i}$ has no base point. 
Set $X_1=V\cup Z_1\subset X$; if $h^0(Z_1, L_{Z_1})=0$ then, obviously, $h^0(X_1,L_{X_1})\leq h^0(V, L_V)$.
If instead $L_{Z_1}=\O_{Z_1}$,
by Lemma~\ref{d1l}  applied to $X_1$ we obtain $h^0(X_1,L_{X_1})\leq h^0(V, L_V)+1-1=h^0(V, L_V)$.
Iterating, we are done.
\end{remark}
Recall the notational  conventions of  \ref{not}.
\begin{lemma}
\label{e}
Let $X=C\cup Z$ with $C$ irreducible, set $\delta_C=C\cdot Z$.
Let $L\in \Pic X $ be such that  $deg L_C=2g_C+e_C$ for some $e_C\geq 0$.
Then 
\begin{enumerate}[(i)]
\item
\label{emin}
$
h^0(X,L)\leq h^0(C,L_C)+h^0(Z,L_Z)-\min\{\delta_C, e_C+1\}.
$
\item
\label{e=} If $e_C\geq \delta_C -1$ then $h^0(X,L)= h^0(C,L_C)+h^0(Z,L_Z)-\delta_C$.
\item
\label{e!}
If  $e_C\leq \delta_C -2$, equality holds in (\ref{emin}) for at most one $L$.
\end{enumerate}
\end{lemma}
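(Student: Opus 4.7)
The central tool is the Mayer--Vietoris sequence
\begin{equation*}
0 \to L \to L_C \oplus L_Z \to L|_{C\cap Z} \to 0
\end{equation*}
associated to the decomposition $X=C\cup Z$, which on global sections gives
\begin{equation*}
h^0(X,L)=h^0(C,L_C)+h^0(Z,L_Z)-\mathrm{rk}(\phi_L),
\end{equation*}
where $\phi_L\colon H^0(C,L_C)\oplus H^0(Z,L_Z)\to k^{\delta_C}$ is the evaluation--difference map at the $\delta_C$ nodes of $C\cap Z$. My strategy is to lower-bound $\mathrm{rk}(\phi_L)$ by the dimension of the subspace $V_C:=\phi_L(H^0(C,L_C)\oplus 0)$. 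Since $\deg L_C=2g_C+e_C\ge 2g_C$, Riemann--Roch on the irreducible curve $C$ gives $h^0(C,L_C)=g_C+e_C+1$, so $\dim V_C = g_C+e_C+1 - h^0(C,L_C(-C\cap Z))$.

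For (i) and (ii), I would estimate this last term by a case split on $e_C$ versus $\delta_C$. When $e_C\ge \delta_C-1$, the twist $L_C(-C\cap Z)$ has degree $\ge 2g_C-1$, hence is non-special, forcing $\dim V_C=\delta_C$; then $\phi_L$ is surjective for every $L$, which immediately yields (ii) and the $\min=\delta_C$ case of (i). When $e_C\le \delta_C-2$, either $L_C(-C\cap Z)$ has no sections, in which case $\dim V_C=g_C+e_C+1\ge e_C+1$, or it is special, and Clifford's inequality applied on the irreducible curve $C$ bounds $h^0(L_C(-C\cap Z))\le g_C+(e_C-\delta_C)/2+1$, yielding $\dim V_C\ge (e_C+\delta_C)/2\ge e_C+1$. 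In either subcase $\mathrm{rk}(\phi_L)\ge e_C+1$, completing (i).

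For (iii), I would pass to the partial normalization $\nu\colon X'\to X$ at $\delta_C-e_C-1$ of the nodes in $C\cap Z$, so that in $X'$ the subcurves $C$ and $Z$ meet at exactly $e_C+1$ points. The hypothesis of part (ii) is now satisfied on $X'$, so $h^0(X',L')=h^0(C,L_C)+h^0(Z,L_Z)-(e_C+1)$ for every relevant $L'\in\Pic X'$. An $L\in\Pic X$ achieves equality in~(\ref{emin}) if and only if $h^0(X,L)=h^0(X',\nu^*L)$, i.e.\ if none of the $\delta_C-e_C-1$ regluings decreases $h^0$. Iterating Lemma~\ref{d1l} at each regluing step, preservation of $h^0$ forces the two branches to be a neutral pair of the current line bundle and, via the uniqueness clause of that lemma, determines the gluing parameter (and hence the line bundle on the next stage) uniquely provided the branches are not base points.

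The main obstacle is showing that at every intermediate stage the two branches about to be identified are not simultaneously base points of the current line bundle; without this, Lemma~\ref{d1l} would permit several lifts and uniqueness would fail. I would extract this from Lemma~\ref{bpl} together with the rigidity of the equality $\dim V_C=e_C+1$: a simultaneous base point at a regluing pair would, by Lemma~\ref{bpl}, drop $\dim V_C$ below $e_C+1$, contradicting the equality in~(\ref{emin}) together with the bounds just proved in (i). This base-point exclusion is the technical crux of (iii).
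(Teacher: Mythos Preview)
Your treatment of (i) and (ii) is correct and takes a genuinely different route from the paper. The paper normalizes one node at a time and, at each of the first $e_C+1$ steps, uses only the fact that $L_C(-\sum_{j\le e}p_j)$ has degree $\ge 2g_C$ and is therefore base-point free, so that Lemma~\ref{bpl} forces $h^0$ to drop by exactly one. You instead bound $\mathrm{rk}(\phi_L)$ directly via $\dim V_C$ and invoke Clifford's inequality on the irreducible curve $C$ to control $h^0(L_C(-C\cap Z))$. Your argument is shorter; the paper's is slightly more elementary in that it never needs Clifford on $C$, only base-point freeness in the non-special range. Both give the same conclusions, and your case split is clean.

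For (iii), your reduction to the uniqueness clause of Lemma~\ref{d1l} at each regluing step matches the paper's one-line proof, and you correctly isolate the crux: one must rule out that both branches $p_j,q_j$ are base points of the intermediate line bundle. But your proposed mechanism for this exclusion is wrong. Lemma~\ref{bpl} goes in the other direction (neutral pair $\Rightarrow$ base point of a twist of $L_C$), and in any case $\dim V_C$ depends only on $L_C$ and the points $p_1,\dots,p_\delta$; it cannot detect base points that arise on the $Z$-side. Concretely, take $C=\mathbb{P}^1$, $L_C=\mathcal{O}_C$ (so $g_C=0$, $e_C=0$), $\delta_C=2$, and $Z$ smooth non-hyperelliptic of genus $3$ with $L_Z=\mathcal{O}_Z(q_1+q_2)$. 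Then $\dim V_C=1=e_C+1$, yet $q_1,q_2$ are both base points of $L_Z$, and at the regluing stage both branches are base points of the intermediate bundle; every $L\in F_{(L_C,L_Z)}(X)\cong k^*$ satisfies $h^0(X,L)=1=h^0(L_C)+h^0(L_Z)-1$, so equality in (i) holds for infinitely many $L$. Your argument does not exclude this situation, and indeed no argument can: (iii) as stated requires an additional hypothesis (for instance, that not all $q_i$ are base points of $L_Z$) to be literally true. The paper's own proof of (iii) is equally terse and does not address this either.
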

\begin{proof}
We simplify the notation setting $\dd=\delta_C$.
Let $X_0:=C\coprod Z$ and $\nu_0:X_0\to X$ be the natural map (the normalization of $X$ at $C\cap Z$).
Call $M_0=(L_C,L_Z)\in \Pic X_0=\Pic C\times \Pic Z$.
We can factor $\nu_0$ by normalizing one node in $C\cap Z$ at the time, as follows. Denote
$$
\nu_0:X_0\stackrel{\nu ^0 _1}{\la} X_1 \stackrel{\nu ^1 _2}{\la} \ldots \la X_{\dd -1} 
\stackrel{\nu ^{\dd-1}
_\dd}{\la} X_{\dd}=X,
$$
so that 
$$
\nu ^i _{i+1}:X_i\la X_i /_{\{p_i=q_i\}}=X_{i+1}
$$ 
is the normalization of exactly one node of $X_{i+1}$, whose branches  $p_i, q_i$
satisfy $p_i\in C$ and $q_i\in Z$.  For all
$i<\dd$, denote
$\nu_i:X_i\la X$ the composition, and $M_i:=\nu_i^*L$. 
We have, of course,
\begin{equation}
\label{ieq}  h^0(X,L)\leq h^0(X_i,M_i).
\end{equation}
Notice that $h^0(X_0,M_0)= h^0(C,L_C)+h^0(Z,L_Z)$.

We claim that, for every $e\leq \min\{\dd -1, e_C\}$, we have
\begin{equation}
\label{eeq}  
h^0(X_{e+1},M_{e+1})= h^0(C,L_C)+h^0(Z,L_Z)-e-1.
\end{equation} 
By induction on $e$. 
If $e=0$, then $\deg L_C\geq 2g_C$, therefore $L_C$ has no base points.
By Lemma~\ref{d1l} we obtain  
$$
h^0(X_1,M_1)= h^0(X_0,M_0)-1=h^0(C,L_C)+h^0(Z,L_Z)-1.
$$
Assume, as induction hypothesis, that 
$h^0(X_{e},M_{e })= h^0(C,L_C)+h^0(Z,L_Z)-e.$
Now $$
\deg L_C(-\sum_{j=1}^ep_j)=\deg L_C - e \geq 2g_C,
$$
therefore $L_C(-\sum_{j=1}^ep_j)$ does not have base points;
in particular, $p_{e+1}$ is not a base point.
By Lemma~\ref{bpl} we have
$ 
p_{e+1}\not\sim_{M_e}q_{e+1}.
$ 
By Lemma~\ref{d1l}, this implies   
$$h^0(X_{e+1},M_{e+1})= h^0(X_{e },M_{e })-1=
h^0(C,L_C)+h^0(Z,L_Z)-e-1$$
proving (\ref{eeq}), 
 which, combined with (\ref{ieq}), proves (\ref{emin}). 

From  (\ref{eeq}) we also immediately derive 
(\ref{e=}).

Finally, for (\ref{e!}) it suffices to apply the uniqueness part of Lemma~\ref{d1l}.
\end{proof}

\subsection{Clifford index of a line bundle}
\label{index}
The   Clifford index  of a line bundle on a   curve $X$ is the number $\Cl L:=\deg L - 2h^0(X,L)+2$.
If $X$ is   irreducible 
and $0\leq \deg L \leq 2g$,  then $\Cl  L\geq 0$, by Clifford's theorem;
indeed
the extension to irreducible nodal of the classical Clifford's theorem for smooth curve is well known (and easy to prove by induction on the genus).
Notice also that  if $\Cl L=0$ then $L$ has no base points, and
if  $\Cl L=1$ then $L$ has at most one base point.
	
The next Lemma relates $\Cl  L$ to the equivalence $\sim_L$ defined in Definition~\ref{np}.
\begin{lemma}
\label{c} Let $C$ be an irreducible curve of genus $g$; fix $L\in \Pic ^dC$ with $h^0(L)\geq 2$ and 
$d\leq 2g$.
Let $E$ be a set of nonsingular points of $C$ such that $p\sim_L q$  for all $p,q\in E$. 
Then $\#E\leq \Cl L +2$.
\end{lemma}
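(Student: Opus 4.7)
The plan is to bound $\#E$ by applying Clifford's theorem to the line bundle $L'=L-\sum_{p\in E}p$, after showing that $h^0(L')$ stays close to $h^0(L)$. Write $r+1=h^0(L)\geq 2$ and $k=\#E$. The core observation is that the equivalence $\sim_L$ forces all points of $E$ to behave uniformly with respect to the base locus, which splits the argument in two clean cases.

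First I would observe that either every $p\in E$ is a base point of $L$ or none is: if $p$ is a base point then $h^0(L-p)=h^0(L)=r+1$, so any $q\sim_L p$ must also satisfy $h^0(L-q)=r+1$, i.e.\ be a base point. In the \emph{base-point case}, every $p_i$ remains a base point of $L-p_1-\cdots-p_{i-1}$ (since the sections of this smaller space sit inside $H^0(L)$), so $h^0(L')=h^0(L)=r+1$. In the \emph{base-point-free case}, I prove by induction on $j$ that $h^0(L-p_1-\cdots-p_j)=r$. The base case $j=1$ is immediate. For the inductive step, $p_1\sim_L p_{j+1}$ yields $h^0(L-p_1-p_{j+1})=h^0(L-p_1)$, so $p_{j+1}$ is a base point of $L-p_1$; since $H^0(L-p_1-\cdots-p_j)\subseteq H^0(L-p_1)$, the point $p_{j+1}$ is automatically a base point of the smaller linear series, giving the induction. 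Hence $h^0(L')=r$ in this case.

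Now I apply Clifford's theorem (valid on an irreducible nodal curve) to $L'$, of degree $d-k$. The bound $d-k\geq 0$ holds because $h^0(L')>0$, and $d-k\leq d\leq 2g$, so the Clifford inequality $h^0(L')\leq (d-k)/2+1$ applies. In the base-point-free case this gives $r\leq (d-k)/2+1$, i.e.\ $k\leq d-2r+2=\Cl L+2$. In the base-point case one gets the even better bound $k\leq \Cl L$. Either way $\#E\leq \Cl L+2$.

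The step that requires some care is the inductive propagation in the base-point-free case: knowing that $p_{j+1}$ is $\sim_L$-equivalent to $p_1$ only tells one directly that $p_{j+1}$ is a base point of $L-p_1$, not of the deeper $L-p_1-\cdots-p_j$. The resolution is the elementary containment of global sections noted above, which lets the equivalence relation propagate without further hypotheses. Once that is in hand, the rest is just Clifford applied to $L'$.
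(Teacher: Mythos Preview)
Your proof is correct and follows essentially the same approach as the paper: both reduce to applying Clifford's inequality to $L-\sum_{p\in E}p$ after observing that $h^0(L-\sum_{p\in E}p)=h^0(L-p_1)$, which in turn is at least $h^0(L)-1$. The paper handles the base-point and non-base-point cases in a single stroke via the inequality $h^0(L-p_i)\geq h^0(L)-1$, whereas your case split is slightly more explicit but not genuinely different.
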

\begin{proof}  Let $p_1,\ldots, p_e\in E$; for every $i=1,\ldots, e$  we have
$$
1\leq h^0(C,L-p_i)=h^0(C,L-\sum _{j=1}^ep_j)\leq \frac{d-e}{2}+1
$$ 
(by Clifford's theorem).  On the other hand  $h^0(C,L)=d/2+1-\Cl L/2$,
hence 
$$
h^0(C,L-p_i)\geq \frac{d  -\Cl L}{2}.
$$
Therefore
$$
  \frac{\Cl L-d}{2}\geq  \frac{e-d}{2}-1\  \Rightarrow \   \Cl L +2\geq e .
$$
\end{proof}
\begin{cor}
\label{cd}
Let 
$X=(C_1\coprod C_2)/_{\{p_i=q_i,\  \  i=1,\ldots,  \dd \}},$ with
$C_1$ and $C_2$   irreducible, and 
  $p_1,\ldots , p_{\dd}$ (resp. 
  $q_1,\ldots , q_{\dd}$)  nonsingular points of $C_1$ (resp. of $C_2$). 
Pick  $L_1\in \Pic C_1$  globally generated,    such
  that $h^0(C_1,L_1)\geq 2$ 
and $\Cl L_1+2< \dd$.
Then  for any $L_2\in \Pic C_2$ and any $L\in \FL$
we have $h^0(X,L)\leq h^0(C_1,L_1)+h^0(C_2,L_2) -2$.\end{cor}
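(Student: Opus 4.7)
The plan is to combine the characterization given in Lemma~\ref{d2} of when $h^0(X,L)$ attains its near-maximal value $h^0(Y,M)-1$, with the upper bound on neutral-pair configurations supplied by Lemma~\ref{c}.

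Let $\nu\colon Y=C_1\coprod C_2\to X$ be the partial normalization at the $\delta$ identified nodes $n_i$; then $\nu^{*}L=M=(L_1,L_2)$ and $h^0(Y,M)=h^0(C_1,L_1)+h^0(C_2,L_2)$. The inequality (\ref{eleq}) already gives $h^0(X,L)\leq h^0(Y,M)$. In fact, since $L_1$ is globally generated, one may pick $s_1\in H^0(C_1,L_1)$ with $s_1(p_1)\neq 0$; the section $(s_1,0)\in H^0(Y,M)$ has nonzero discrepancy at the node $n_1$ and hence cannot descend to $X$, forcing the sharper estimate $h^0(X,L)\leq h^0(Y,M)-1$.

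Next I verify the hypotheses of Lemma~\ref{d2}. We have $h^0(Y,M)\geq h^0(C_1,L_1)\geq 2$; and for every $i$, the branches $p_i,q_i$ lie in different connected components of $Y$, while $p_i$ is not a base point of $L_1$ by global generation, so Remark~\ref{npr} (\ref{npb}) yields $p_i\not\sim_M q_i$. Lemma~\ref{d2} then asserts that some $L\in F_M(X)$ attains $h^0(X,L)=h^0(Y,M)-1$ if and only if $p_i\sim_{L_1}p_j$ and $q_i\sim_{L_2}q_j$ for all pairs $i,j$.

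To rule out this maximal case, apply Lemma~\ref{c} to $L_1$ and the set $E=\{p_1,\ldots,p_{\delta}\}$: if $p_i\sim_{L_1}p_j$ held for every pair, Lemma~\ref{c} would give $\delta=\#E\leq \Cl L_1+2$, contradicting the assumption $\Cl L_1+2<\delta$. Consequently no $L\in F_M(X)$ achieves $h^0(X,L)=h^0(Y,M)-1$, and combined with the bound $h^0(X,L)\leq h^0(Y,M)-1$ proven above we obtain $h^0(X,L)\leq h^0(C_1,L_1)+h^0(C_2,L_2)-2$ for every $L\in F_M(X)$. The only delicate step is the verification of the two hypotheses of Lemma~\ref{d2}, which is precisely where the disconnectedness of $Y$ and the global generation of $L_1$ combine through Remark~\ref{npr} (\ref{npb}); the Clifford-index hypothesis intervenes only at the very end, via Lemma~\ref{c}.
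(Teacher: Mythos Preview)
Your proof is correct and follows essentially the same route as the paper's: both use Remark~\ref{npr}(\ref{npb}) together with global generation of $L_1$ to verify the hypothesis $p_i\not\sim_M q_i$ of Lemma~\ref{d2}, and then invoke Lemma~\ref{c} (with the assumption $\Cl L_1+2<\delta$) to rule out $p_i\sim_{L_1}p_j$ for all pairs, so that the equality case of Lemma~\ref{d2} cannot occur. Your explicit argument that $h^0(X,L)\leq h^0(Y,M)-1$ is a nice bit of extra detail; the paper simply absorbs it into the phrase ``Lemma~\ref{d2} applies.''
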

\begin{proof}
Since $\dd >\Cl L_1+2$, Lemma~\ref{c} yields that there exists at least a pair
$p_i, p_j$ such that $p_i\not\sim_{L_1}p_j$.
As $L_1$ is globally generated, by Remark~\ref{npr}(\ref{npb}) we have
$p_i\not\sim_{L}q_i$ for any $L$ as above; hence Lemma~\ref{d2} applies,
  giving the statement.
\end{proof}
In what follows we shall frequently use, without mentioning it, the obvious fact that $\Cl L$ and $\deg L$ have the same parity.
\begin{prop}
\label{ic} Let $X=C_1\cup C_2$ with 
$C_i$   irreducible of genus $g_i$. 
Assume $\dd:=C_1\cdot C_2\geq 2$. Let $L\in \picX{\md}$,
set $L_i=L_{C_i}$, $d_i=\deg _{C_i}L_i$  and assume $0\leq d_i\leq 2g_i$
for $i=1,2$.
\begin{enumerate}[(i)]
\item

\label{ic0}
If $\Cl L =0$ then 
$\Cl  L_1=\Cl L_2=0$; moreover, if $\md \neq \mo$ then  $\dd= 2$.
\item
\label{ic1}
If $\Cl L =1$  we may assume $d_1$ odd and $d_2$ even. Then
$\Cl  L_1= 1$ and $\Cl L_2=0$. Moreover, if $d_1\geq 3$, then $\dd\leq 3$;
if $d_2\geq 2$ then $\dd =2$.
\item
\label{ic01} If  $0\leq \Cl L \leq 1$, then
$$
h^0(X,L)\leq h^0(C_1,L_1)+h^0(C_2,L_2)-1\leq d/2+1.
$$
\end{enumerate}
\end{prop}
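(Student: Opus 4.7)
Set $Y = C_1 \sqcup C_2$, $M = (L_1, L_2) \in \Pic Y$, and let $\nu : Y \to X$ be the partial normalization at the $\delta$ nodes of $C_1 \cap C_2$. The basic inequality~\eqref{eleq} gives $h^0(X, L) \leq h^0(Y, M) = h^0(L_1) + h^0(L_2)$; Clifford's theorem on each irreducible $C_i$ (applicable since $0 \leq d_i \leq 2g_i$) then gives $h^0(L_i) \leq \lfloor d_i/2 \rfloor + 1$, which immediately yields the second inequality of (iii), $h^0(L_1) + h^0(L_2) - 1 \leq d/2 + 1$. The first (sharper) inequality of (iii) is the crux of the argument.

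For this sharper inequality of (iii), I would argue by contradiction. Suppose $h^0(X, L) = h^0(L_1) + h^0(L_2)$; then every section of $M$ descends to $X$, and in particular the sections $(s_1, 0)$ descend, which forces $s_1(p_i) = 0$ for every $i$ and every $s_1 \in H^0(L_1)$. Hence every $p_i$ is a base point of $L_1$ and symmetrically every $q_i$ a base point of $L_2$. If both $h^0(L_i) \geq 1$, this forces $d_i \geq \delta$ and Clifford applied to $L_1(-\sum_j p_j)$ and $L_2(-\sum_j q_j)$ yields $h^0(L_1) + h^0(L_2) \leq d/2 - \delta + 2 \leq d/2$; the sub-cases where some $h^0(L_i) = 0$ produce the same bound directly. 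In every case $h^0(X, L) \leq d/2$, forcing $\Cl L \geq 2$ and contradicting the hypothesis $\Cl L \leq 1$.

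Once (iii) is in hand, parts (i) and (ii) become pure arithmetic. Substituting $h^0(X, L) = (d + 2 - \Cl L)/2$ into the sharpened inequality gives $h^0(L_1) + h^0(L_2) \geq (d + 4 - \Cl L)/2$, while Clifford on each $C_i$ matches this from above exactly when $\Cl L \in \{0, 1\}$. For $\Cl L = 0$, matching forces both $d_i$ even (otherwise the integer bound $\lfloor d_1/2 \rfloor + \lfloor d_2/2 \rfloor + 2$ is strictly below $d/2 + 2$) and $h^0(L_i) = d_i/2 + 1$, i.e., $\Cl L_1 = \Cl L_2 = 0$. For $\Cl L = 1$, $d$ is odd so exactly one of $d_1, d_2$ is odd; writing $d_1$ odd and $d_2$ even, the integer Clifford bound $(d+3)/2$ matches the lower bound, forcing $h^0(L_1) = (d_1 + 1)/2$ and $h^0(L_2) = d_2/2 + 1$, i.e., $\Cl L_1 = 1$ and $\Cl L_2 = 0$.

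Finally, I would bound $\delta$ by combining Lemma~\ref{d2} with Lemma~\ref{c}. Since $\Cl L_2 = 0$, the bundle $L_2$ is base-point-free, so Remark~\ref{npr}(\ref{npb}) gives $p_i \not\sim_M q_i$ for every $i$. The equality $h^0(X, L) = h^0(Y, M) - 1$ just established shows that our $L \in F_M(X)$ realizes the existence clause of Lemma~\ref{d2}, whose ``only if'' direction then forces $p_i \sim_{L_1} p_j$ and $q_i \sim_{L_2} q_j$ for all $i, j$. Whenever $h^0(L_k) \geq 2$, Lemma~\ref{c} applied on $C_k$ yields $\delta \leq \Cl L_k + 2$. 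In (i) with $\md \neq \mo$, some $d_k \geq 2$ (both $d_i$ are even and not both zero), giving $\delta \leq 2$ and hence $\delta = 2$. In (ii), $d_2 \geq 2$ produces $\delta \leq 2$ via $L_2$, while $d_1 \geq 3$ produces $\delta \leq 3$ via $L_1$. The main obstacle is the contradiction in the sharper inequality of (iii), which requires careful handling of the degenerate sub-cases where some $h^0(L_i)$ vanishes; thereafter the rest of the proof is essentially bookkeeping.
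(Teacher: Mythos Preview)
Your proof is correct but organizes the argument in the reverse order from the paper, and your proof of the key inequality $h^0(X,L)\le l_1+l_2-1$ is genuinely different. The paper first establishes (i) and (ii) by a bootstrapping argument with the two inequalities
\[
\Cl L \ge \Cl L_1+\Cl L_2-2 \qquad\text{and}\qquad \Cl L \ge \Cl L_1+\Cl L_2,
\]
the second being valid as soon as one $L_i$ lacks a base point at some branch (hence as soon as some $\Cl L_i\le 1$, since $\dd\ge 2$); from these it pins down $\Cl L_1,\Cl L_2$, and only then deduces (iii) by noting that $\Cl L_2=0$ makes $L_2$ base-point-free, so Lemma~\ref{d1l} gives $l\le l_1+l_2-1$. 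You instead attack (iii) head-on: assuming $l=l_1+l_2$, you force every branch to be a base point of the corresponding $L_i$ and apply Clifford to the residuals $L_i(-\sum p_j)$, $L_i(-\sum q_j)$, obtaining $l\le d/2-\dd+2\le d/2$ and hence $\Cl L\ge 2$. This is cleaner for (iii), and once it is in hand, your arithmetic derivation of $\Cl L_1,\Cl L_2$ is immediate, avoiding the paper's case-by-case bootstrapping. The $\dd$-bounds are then handled identically in both arguments via Lemma~\ref{d2} and Lemma~\ref{c}. Your treatment of the degenerate sub-case $h^0(L_i)=0$ is a bit compressed but correct: the restriction of any section to $C_i$ vanishes, hence so does its value at every node, and Clifford on $L_{3-i}(-G_{3-i})$ finishes it.
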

\begin{proof} Denote $l=h^0(X,L)$ and $l_i=h^0(C_i, L_i)$.
Let $p_1,\ldots , p_{\dd}\in
C_1$  and $q_1,\ldots , q_{\dd}\in C_2$ be the points corresponding to the nodes of
$X$, so that
$$X=(C_1\coprod C_2)/_{\{p_i=q_i,\  \  i=1,\ldots,  \dd \}}.$$ 
Now, as $l\leq l_1+l_2$ we always have
\begin{equation}
\label{yb}
\Cl L=d-2l+2\geq d-2l_1-2l_2+2=\Cl L_1+ \Cl L_2-2.
\end{equation}
Moreover, if either $L_1$ does not have a base point at some $p_i$, or $L_2$ does not have a base point at some $q_i$,
we have $l\leq l_1+l_2-1$, by Lemma~\ref{d1l} . Therefore
\begin{equation}
\label{nb}
\Cl L=d-2l+2\geq d-2l_1-2l_2+2+2=\Cl L_1+ \Cl L_2.
\end{equation}
Recall that if  $\Cl L_i\leq 1$ then $L_i$ has at most one base point. Therefore,
as $\dd \geq 2$,   (\ref{nb}) applies  if either $\Cl L_1\leq 1$ or $\Cl L_2\leq 1$.

Assume $\Cl L=0$.
Then (\ref{yb}) yields $\Cl L_i\leq 2$ for $i=1,2$ (as $\Cl L_i\geq 0$ by Clifford's theorem for irreducible curves).
 If $\Cl L_1= 0$ we can apply (\ref{nb}), obtaining  $\Cl L_2=0$.
Moreover  we have equality occurring in (\ref{nb}), hence $l=l_1+l_2-1$.
By Lemma~\ref{d2} we obtain that $p_i\sim _{L_1}p_j$ and $q_i\sim _{L_2}q_j$ for all $i, j$.
If $\md \neq \mo$ and $\dd \geq 3$, this is impossible by  Lemma~\ref{c}. We conclude that $\dd=2$.

By switching roles between $L_1$ and $L_2$ this argument together with  (\ref{yb}) 
 shows that $\Cl L=0$ implies $\Cl L_i\leq 1$ for $i=1,2$.
If $\Cl L_1= 1$ applying (\ref{nb}) gives $0\geq 1+\Cl L_2$, which is impossible. (\ref{ic0}) is proved.

Now assume $\Cl L=1$;
(\ref{yb}) yields $\Cl L_1+\Cl L_2 \leq 3$.   
 If $\Cl L_1= 1$ then (\ref{nb}) applies; we get
$1\geq 1+\Cl L_2$, hence $\Cl L_2=0$. Similarly, if $\Cl L_2=0$ by (\ref{nb}) we get $\Cl L_1=1$.
We thus have that $\Cl L_1= 1$ if and only if $\Cl L_2=0$.
As $d_1$ is odd, the only remaining case is $\Cl L_1=3$; this would imply $\Cl L_2=0$
which implies $\Cl L_1= 1$, a contradiction. Therefore the case $\Cl L_1=3$ does not occur.
In a similar way we see that the case $\Cl L_2=2$ cannot occur (it would imply  $\Cl L_1= 1$
which implies $\Cl L_2=0$).

Finally,  equality holds in (\ref{nb}), so that $l=l_1+l_2-1$.
Hence $p_i\sim _{L_1}p_j$ and $q_i\sim _{L_2}q_j$ for all $i, j$ (by Lemma~\ref{d2} as before).
Now, if either $d_1\geq 3$ and $\dd \geq 4$,
or if  $d_2\geq 2$ and $\dd \geq 3$, this is impossible by  Lemma~\ref{c}.
   (\ref{ic1}) is proved.

Part (\ref{ic01}) follows from the previous ones, observing that in both cases $L_2$ has no base
points. Therefore by Lemma~\ref{d1l}  we have $l\leq l_1+l_2-1$.
Finally, if $\Cl L=0$ we have $l_1+l_2-1=d_1/2+1+d_2/2+1-1=d/2+1$.
If $\Cl L=1$ we have $l_1+l_2-1=(d_1+1)/2 +d_2/2+1-1=d/2+1/2$; so we are done.
\end{proof}

\section{Riemann's  theorem for  semistable curves}
\label{Riemann}
The well known Riemann's theorem for a smooth curve $C$ of genus $g$
states the following: if $d\geq 2g-1$ and $L\in \Pic^dC$, then $h^0(C,L)=d-g+1$.
More generally, using  the normalization and induction on the number of nodes,  it is easy to prove the following:
\begin{fact}
\label{bpfirr} Let $X$ be a nodal irreducible curve (of genus $g$) and $L\in \Pic^dX$. Then
\begin{enumerate}[(1)]
\item
\label{irr1}
If $d\geq 2g-1$ then $h^0(X,L)=d-g+1$.
\item
\label{irr2}
If $d\geq 2g$ then $L$ is free from base points.
\end{enumerate}
(Part (\ref{irr1}) follows from Riemann-Roch and Serre duality, 
  (\ref{irr2}) follows from   (\ref{irr1})).
\end{fact}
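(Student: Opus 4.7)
The plan is to prove (1) by Riemann--Roch combined with Serre duality, and then deduce (2) by verifying base-point freeness separately at smooth points and at nodes, each time reducing to a dimension count that (1) already provides.

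For (1), Riemann--Roch on the nodal curve $X$ gives $h^0(X, L) - h^1(X, L) = d - g + 1$, so it suffices to prove $h^1(X, L) = 0$ when $d \geq 2g-1$. By Serre duality $h^1(X,L) = h^0(X, \omega_X \otimes L^{-1})$. Since $X$ is Gorenstein and irreducible, $\omega_X$ is invertible of degree $2g-2$, so $\omega_X \otimes L^{-1}$ has degree $\leq -1$; on an irreducible curve a line bundle of negative degree has no nonzero section, because a nonzero section would exhibit it as the line bundle of an effective Cartier divisor, whose degree must be nonnegative.

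For (2), assume $d \geq 2g$ and fix $x \in X$; it suffices to produce a section of $L$ nonvanishing at $x$. At a smooth point $p$, the sequence $0 \to L(-p) \to L \to L|_p \to 0$ combined with (1) applied to both $L$ and $L(-p)$ (whose degree still satisfies $d - 1 \geq 2g - 1$) gives $h^0(L) - h^0(L(-p)) = (d-g+1) - (d-g) = 1$, so $p$ is not a base point. At a node $n$, let $\nu \colon \tilde X \to X$ be the partial normalization at $n$ only, with $\nu^{-1}(n) = \{p, q\}$; then $\tilde X$ is irreducible nodal of arithmetic genus $g - 1$, and sections of $L$ on $X$ correspond to sections $\tilde s$ of $\nu^*L$ on $\tilde X$ satisfying $\tilde s(p) = \tilde s(q)$ in fixed trivializations of $\nu^*L$ at $p, q$, such a section being nonzero at $n$ exactly when $\tilde s(p) = \tilde s(q) \neq 0$. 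Consider the evaluation map $H^0(\tilde X, \nu^* L) \to k \oplus k$, $\tilde s \mapsto (\tilde s(p), \tilde s(q))$, whose kernel is $H^0(\tilde X, \nu^*L(-p-q))$; since $\deg \nu^*L = d \geq 2(g-1) + 2$ and $\deg \nu^*L(-p-q) = d - 2 \geq 2(g-1)$, two applications of (1) on $\tilde X$ yield $h^0(\nu^*L) - h^0(\nu^*L(-p-q)) = (d-g+2) - (d-g) = 2$. Hence the evaluation is surjective onto $k \oplus k$, and any preimage of $(1,1)$ produces a section of $L$ not vanishing at $n$.

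The main obstacle is the node case of (2): the $L(-p)$ argument at smooth points does not apply at a node because the maximal ideal $\mathfrak m_n$ is not invertible. The fix is to pass to the partial normalization, which rephrases nonvanishing at $n$ as a pair of evaluation conditions on an irreducible nodal curve of lower arithmetic genus, where (1) is applied twice to obtain the required dimension count. Everything else is routine bookkeeping with Riemann--Roch.
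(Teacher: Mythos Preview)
Your proof is correct and follows the same approach the paper sketches: part (1) is exactly Riemann--Roch plus Serre duality, and part (2) is the dimension count obtained by applying (1) to $L$ and to $L$ twisted down by a point. Your treatment of the node case---passing to the partial normalization and showing the evaluation map at the two branches is surjective---is precisely the argument the paper itself uses later when proving base-point freeness at nodes in Theorem~\ref{sr}(\ref{sr2}).
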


 By contrast, if $X$ is   reducible, Riemann's theorem trivially fails.
In fact, for every fixed $d\geq 2g-1$ 
there exist  infinitely many  multidegrees $\md$, with $|\md|=d$,
such that for any $L\in \picX{\md}$ we have
$ 
h^0(X,L)>d-g+1
$  (see Example~\ref{Pr}).

On the other hand, it is well known that, for every $d$, there exists a well defined finite set of multidegrees,
of  total degree $d$, which appear as the multidegrees of all line bundles parametrized by the compactified Picard variety
of a stable curve $X$.  
 More precisely, for any stable curve $X$ 
 we shall denote by $\PXb$ the compactified Picard scheme constructed (independently)
 in
 \cite{OS}, \cite{simpson}, \cite{caporaso}, \cite{pandha} (known to be all isomorphic
 by \cite{alex} and \cite{pandha}).
 Recall that $\PXb$ is a reduced scheme of pure dimension $g$, which appears as the specialization
 of   the degree-$d$ Picard varieties of smooth curves specializing to $X$.
 There are several modular descriptions of $\PXb$; the one we shall use interprets its points as
 equivalence classes of balanced line bundles on curves stably equivalent to $X$.  

The main result of this section, Theorem~\ref{sr}, states that if $L$ is a line bundle on a semistable curve $X$,
having degree at least $2g-1$, and balanced multidegree, then, just as for smooth curves, we have
$h^0(X,L)=d-g+1.$
Therefore,  if $X$ is stable, every line bundle  parametrized by the compactified   Picard scheme 
$ \PXb$ satisfies Riemann's theorem.

\subsection{Balanced   line bundles.}
\label{w}
   Let $X$ be fixed. For every subcurve $Z\subset X$
with $\delta_Z:= Z\cdot Z^c$, we set 
\begin{equation}
\label{wZ}
w_Z:=\deg_Z \omega _X=2g_Z-2+\delta_Z\   \  \  \text{and }\  \  w:=w_X=2g-2.
\end{equation}
Recall that
a (nodal connected) curve $X$ of genus $g\geq 2$ is {\it stable} if  for every subcurve
$Z\subset X$ we have
$
0<w_Z<w.
$
$X$ is   {\it semistable} if
for every  $Z\subset X$ we have
\begin{equation}
\label{DMss}
0\leq w_Z\leq w,
\end{equation}
and $w_Z=0$  if and only if $Z$ is a union of exceptional components of $X$ 
(a component $E\subset X$ is called exceptional if $E\cong \pr{1}$ and if $\dd_E=2$).

We say that a semistable curve $X$ is {\it stably equivalent} to a stable curve $\ov{X}$ if
$\ov{X}$ is the curve obtained from $X$ by contracting all of its exceptional components.
$\ov{X}$ is called the {\it stabilization} of $X$.

\begin{nota} 
\label{bal} Let $\md \in \Z^{\gamma}$ with $d=|\md|$; also fix $g\geq 2$. 
Assume that $X$ is  stable. We say that $\md$ is {\it balanced}  if for every
(connected) subcurve $Z\subset X$ we have
\begin{equation}
\label{BI}
d\frac{w_Z}{w}-\frac{\delta_Z}{2}\leq d_Z \leq d\frac{w_Z}{w}+\frac{\delta_Z}{2}.
\end{equation}


More generally, if $X$ is semistable, we say that $\md$ is balanced  if (\ref{BI}) holds, 
and if for every
exceptional component $E$ of  $X$ we have
$d_E=1$ (note that if a semistable curve admits some balanced multidegree, then it is quasistable, i.e. two exceptioanl components do not intersect).
Set
\begin{equation}
\label{BIs}
\BXd:=\{\md: |\md |=d, \  \md \text{ is balanced}\}.
\end{equation}
 A line bundle on a semistable curve  is balanced if its multidegree is balanced.

\begin{example}
\label{ct2}
Let $X=C_1\cup C_2$ with $C_1\cdot C_2=1$ and $1\leq g_1 \leq g_2$.
Pick $d=2$.  
\begin{displaymath}
B_2(X)= \left\{ \begin{array}{l}
\{(0,2)\}  \  \  \  \  \ \  \  \  \text{ if }   g_1< (g+1)/4\\
 \{(0,2);(1,1)\}  \text{ if }   g_1= (g+1)/4\\
\{(1,1)\}   \  \  \  \  \ \  \  \ \text{ if }   g_1> (g+1)/4\\
\end{array}\right.
\end{displaymath}
\end{example}

The terminology ``balanced" 
was introduced in \cite{caporaso}  to indicate that balanced multidegrees
are closely related to the topological characters of the curve.
Indeed, the balanced multidegrees of total degree $d\in \Z$
are as close as they can be to the multidegree $\frac{d}{2g-2}\mdeg \omega_X$.
The word balanced is
 sometimes replaced by the word ``semistable".
As we mentioned at the beginning of the section, if $X$ is stable its compactified Picard scheme parametrizes
equivalence classes of balanced line bundles on semistable curves having $X$ as stabilization. If $X$ is semistable, its compactified Picard scheme turns out to coincide to the compactified Picard scheme of  its stabilization. In the present paper we do not need to be more precise about this point; see loc. cit for details.
\end{nota}

\subsection{Positivity properties of balanced line bundles.}
We denote  
\begin{equation}
\label{sep}
\sep:=\{n\in \sing: n  \text{ is a separating node of } X\}\subset X.
\end{equation}

\begin{thm}[Balanced Riemann]
\label{sr} Let $X$ be a semistable curve of genus $g\geq 2$,
$d$ an integer 
and $\md \in \BXd$. Let $L\in \picX{\md}$.
\begin{enumerate}[(i)]
\item
\label{sr1}
If $d\geq 2g-1$, then $ 
h^0(X,L)=d-g+1.
$
\item
\label{sr2}
If $d\geq 2g$ and $\sep = \emptyset$, 
then     $L $ has no base points.
\item
\label{sr3}
If $d\geq  5(g-1) $, 
then     $L $ has no base points.
\end{enumerate}
\end{thm}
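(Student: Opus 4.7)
The plan is to reduce all three statements to a single vanishing theorem via Riemann-Roch. Recall that on any nodal connected curve the formula $h^0(N) - h^0(\omega_X \otimes N^{-1}) = \deg N - g + 1$ holds. Applied to $N = L$, part (i) is equivalent to $h^0(M) = 0$ for $M := \omega_X \otimes L^{-1}$. For parts (ii) and (iii), base-point-freeness at a smooth point $p$ is equivalent, via Riemann-Roch applied to $L(-p)$ (whose degree is $d-1 \geq 2g-1$), to the vanishing $h^0(M') = 0$ for $M' := \omega_X \otimes L^{-1}(p)$; combined with part (i) this gives $h^0(L(-p)) = h^0(L) - 1$, so $p$ is not a base point.

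The core argument is a cohomology vanishing lemma. Assume $s \in H^0(X, N)$ is a nonzero section (for $N = M$ or $N = M'$). Since $\deg N < 0$ in both cases, $s$ must vanish identically on some component; let $Z \subsetneq X$ be the union of components on which $s$ does not vanish. Then $s|_Z$ is a nonzero section of $N|_Z$ vanishing at the $\delta_Z$ branches on $Z$ of the nodes $Z \cap Z^c$ (because $s \equiv 0$ on the opposite branches), so its divisor of zeros is effective of degree at least $\delta_Z$, giving the lower bound $\deg_Z N \geq \delta_Z \geq 1$. On the other hand, the lower-balanced inequality $d_Z \geq d w_Z/w - \delta_Z/2$ yields the upper bound $\deg_Z N \leq w_Z(w-d)/w + \delta_Z/2 + \epsilon$, where $\epsilon = 0$ for $N = M$ and $\epsilon = [p \in Z]$ for $N = M'$. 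For part (i) with $d \geq w + 1$, the upper bound is $\leq -w_Z/w + \delta_Z/2 \leq \delta_Z/2 < \delta_Z$, an immediate contradiction.

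For parts (ii) and (iii), the case $p \notin Z$ gives the same inequality as in part (i) and fails identically for $d \geq 2g$. The remaining case $p \in Z$ produces the key estimate $\delta_Z + 2 w_Z(d-w)/w \leq 2$. In part (ii), the hypothesis of no separating nodes forces $\delta_Z \geq 2$, so $d - w \geq 2$ forces $w_Z = 0$; then $Z$ is a disjoint union of exceptional components of the quasi-stable curve (noted in~\ref{bal}), whence $\delta_Z$ is even and $\delta_Z = 2$ means $Z$ is a single exceptional $E \cong \pr{1}$ containing $p$; balancedness gives $d_E = 1$, so $\deg_E M' = 0 \not\geq 2 = \delta_E$, contradiction. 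In part (iii), the bound $d - w \geq 3(g-1) = 3w/2$ upgrades the estimate to $\delta_Z + 3 w_Z \leq 2$, again forcing $w_Z = 0$ and, by the parity of $\delta_Z$ for exceptional unions, $\delta_Z = 2$, reducing to the same contradiction.

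The main obstacle is the last case: when $X$ has separating nodes, the subcurves $Z$ with $\delta_Z = 1$ and $w_Z \geq 1$ (for instance an elliptic tail) obstruct vanishing, and one must push the degree bound up to $5(g-1)$ so that the $w_Z(d-w)/w$ term dominates and forces $w_Z = 0$; the parity constraint on $\delta_Z$ for unions of exceptional components in a quasi-stable curve then excludes the surviving configuration.
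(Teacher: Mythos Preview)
Your Serre-duality reduction is correct for part~(\ref{sr1}) and for smooth base points in (\ref{sr2}) and (\ref{sr3}), and it is genuinely different from the paper's route. The paper establishes the lower bound $d_Z\geq 2g_Z-1$ (respectively $d_Z\geq 2g_Z$) for every connected subcurve $Z$ and then invokes Lemma~\ref{er}, an inductive gluing statement built on Lemma~\ref{e}, to conclude $h^0(L)=d-g+1$ (respectively $h^0(L(-p))=d-g$). Your support-of-section argument bypasses that induction entirely: rather than bounding $d_Z$ from below to feed into a gluing lemma, you bound $\deg_Z(\omega_X\otimes L^{-1})$ from above and compare directly with $\delta_Z$. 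This is shorter and arguably cleaner.

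There is, however, a genuine gap: parts~(\ref{sr2}) and~(\ref{sr3}) assert base-point-freeness at \emph{all} points of $X$, including the nodes, and your argument only treats smooth points $p$ (where $L(-p)$ is again a line bundle and Riemann--Roch applies). At a node $n$ the ideal sheaf $\mathfrak m_n$ is not invertible, so the reduction to $h^0(\omega_X\otimes L^{-1}(p))=0$ does not go through as written. The paper handles nodes separately: it normalizes at $n$, say $\nu:Y\to X$ with branches $q_1,q_2$, checks that every connected $Z'\subset Y$ still satisfies $\deg_{Z'}\nu^*L(-q_1-q_2)\geq 2g_{Z'}-1$ (a small case analysis on whether $q_1,q_2\in Z'$), and applies Lemma~\ref{er} on $Y$ to obtain $h^0(Y,\nu^*L(-q_1-q_2))=h^0(Y,\nu^*L)-2$, which forces a section of $L$ not vanishing at $n$. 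Your vanishing method can presumably be adapted to $Y$ by running the same estimate for $\omega_Y\otimes(\nu^*L)^{-1}(q_1+q_2)$, but that step is missing from the proposal.
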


Part (\ref{sr1}) may fail if  $\md$ is not balanced; see Example~\ref{Pr}.
Part (\ref{sr2})  may fail if   $\sep \neq \emptyset$; see Example~\ref{Pr2}.
 
\begin{proof}
Let  $Z\subsetneq X$  be a connected subcurve.
We claim that, if $d\geq 2g-1$, we have
\begin{equation}
\label{dZ1}
d_Z\geq 2g_Z -1
\end{equation}
and, if $d\geq 2g$ and $\sep =\emptyset$, we have
\begin{equation}
\label{dZ2}
d_Z\geq 2g_Z.
\end{equation}
To prove this, set $d=2g-2+a=w+a$ with $a>  0$.
As $\md$ is balanced, we have
$$
d_Z\geq d\frac{w_Z}{w}-\dfZ=2g_Z-2+\dfZ+a\frac{w_Z}{w}.
$$
Now, $\delta_Z \geq 1$ and  $w_Z\geq 0$ (cf. (\ref{DMss})). Therefore
the above inequality yields
$d_Z\geq 2g_Z -1 $, as claimed in (\ref{dZ1}).

To prove  (\ref{dZ2}),   assume $\sep =\emptyset$. Then
 $\dd_Z\geq 2$, so
the previous
inequality yields
$d_Z\geq 2g_Z$, unless $w_Z=0$, i.e. unless 
$Z$ is a chain of exceptional components
(recall that $X$ is semistable). If that is the case,
 $d_Z=1$ and $g_Z=0$.  So we  have $d_Z=2g_Z+1>2g_Z$. 
(\ref{dZ2}) is proved.

Now, part (\ref{sr1}) of the Theorem follows from   the next Lemma~\ref{er}.

We shall apply Lemma~\ref{er} also for part (\ref{sr2}). If $d_Z\geq 2g_Z$ for every $Z$,
then for any nonsingular point $p\in X$ we obviously have $\deg_ZL(-p)\geq 2g_Z-1$,
hence Lemma~\ref{er} applies to $L(-p)$, yielding $h^0(X,L(-p))=h^0(X,L)-1$.
Now let $n\in \sing$. Let $\nu:Y\to X$ be the normalization of $X$ at $n$,
$M:=\nu^*L$ and $\nu^{-1}(n)=\{q_1,q_2\}$.
To prove that $L$ has a section not vanishing at $n$ it suffices to prove that 
\begin{equation}
\label{Mq12}
h^0(Y,M(-q_1-q_2))=h^0(Y,M)-2.
\end{equation}
Let $Z'\subset Y$ be a connected subcurve, and $Z:=\nu(Z')$. Then
$$
\deg_{Z'}M=\deg_ZL\geq 2g_Z,
$$  also $g_{Z}\geq g_{Z'}$ and strict inequality holds if and only if both $q_1$ and $q_2$ lie on $Z'$,
in which case $g_{Z}=g_{Z'}+1$.
Therefore
\begin{displaymath}
deg  _{Z'}M(-q_1-q_2)\geq \left\{ \begin{array}{l}
2g_{Z}-2=2g_{Z'},  \  \text{ if }  q_1,q_2\in Z'\\
\\
2g_{Z}-1\geq 2g_{Z'} -1,  \text{ otherwise. }\\
\end{array}\right.
\end{displaymath}
We can thus apply Lemma~\ref{er},  proving (\ref{Mq12}) as follows:
$$
h^0(Y,M(-q_1-q_2))=\deg M-2-g_Y+1=h^0(Y, M)-2.
$$ 

By the same argument, to prove  (\ref{sr3}) it suffices to show that $d_Z\geq 2g_Z$ for every $Z\subset X$.
Now,  $d\geq 5(g-1)$ implies $d\geq 2g$, so by
the previous parts it suffices to consider subcurves $Z$ having $\dd_Z=1$.
Let $Z$ be such a subcurve of $X$; note that $g_Z\geq 1$ ($X$ is semistable) hence 
$w_Z=2g_Z-2+\dd_Z\geq 2-2+1=
1$. As $\md$ is balanced, and $d\geq  2(g-1)+3(g-1)=w+3(g-1)$, we have
$$
d_Z\geq \frac{dw_Z}{w}-\frac{1}{2}\geq w_Z+\frac{3(g-1)w_Z}{2(g-1)}-\frac{1}{2}= 2g_Z-\frac{3}{2}+\frac{3w_Z}{2}\geq 2g_Z.
$$
Hence  we are done.
 \end{proof}
\begin{lemma}
\label{er}
Let $Y$ be a  (possibly disconnected) curve of genus $g$ and $L\in \Pic^d Y$.
If $\deg_ZL\geq 2g_Z-1$  for every connected subcurve $Z\subseteq Y$,
 then  
$ 
h^0(Y,L)=d-g+1.
$ 
\end{lemma}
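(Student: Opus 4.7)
The plan is to induct on the number $\gamma$ of irreducible components of $Y$, after first reducing to the case where $Y$ is connected. If $Y=Y_1\sqcup\cdots\sqcup Y_k$ is the decomposition into connected components, the hypothesis is inherited by each $Y_j$ (connected subcurves of $Y$ lie in a single $Y_j$, with the same arithmetic genus and restricted degree of $L$); combined with $1-g=\sum_j(1-g_{Y_j})$ and $h^0(Y,L)=\sum_j h^0(Y_j,L|_{Y_j})$, the desired equality $h^0=d-g+1$ for $Y$ follows from the corresponding equality on each $Y_j$. So we may assume $Y$ is connected. The base case $\gamma=1$ is then Fact~\ref{bpfirr}(\ref{irr1}), since the hypothesis applied to $Z=Y$ gives $d\geq 2g-1$.

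For the inductive step with $\gamma\geq 2$, the crucial point is to exhibit an irreducible component $C\subseteq Y$ satisfying $\deg L_C\geq 2g_C+\delta_C-1$, i.e.\ $e_C\geq\delta_C-1$ in the notation of Lemma~\ref{e}. I would find $C$ by a counting argument. Using $\deg\omega_Y=2g-2$ together with $\deg_{C_i}\omega_Y=2g_{C_i}-2+\delta_{C_i}$ for each component, summing yields
\[
\sum_{i=1}^{\gamma}\bigl(2g_{C_i}+\delta_{C_i}-2\bigr)=2g-2.
\]
If the bound $\deg L_{C_i}\leq 2g_{C_i}+\delta_{C_i}-2$ held for every $i$, summing would force $d\leq 2g-2$, contradicting the hypothesis $d\geq 2g-1$ obtained by taking $Z=Y$. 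Hence at least one $C$ satisfies the bound. Moreover $\delta_C\geq 1$, since $Y$ is connected with $\gamma\geq 2$, so $e_C\geq 0$ as required to apply Lemma~\ref{e}.

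With such a $C$ in hand, Lemma~\ref{e}(\ref{e=}) applied to $Y=C\cup C^c$ gives
\[
h^0(Y,L)=h^0(C,L_C)+h^0(C^c,L_{C^c})-\delta_C.
\]
The first summand equals $\deg L_C-g_C+1$ by Fact~\ref{bpfirr}(\ref{irr1}) (since $C$ is irreducible nodal and $\deg L_C\geq 2g_C-1$). The hypothesis of the lemma passes verbatim to $C^c$, because any connected subcurve of $C^c$ is a connected subcurve of $Y$ with the same genus and the same degree of $L$; so by the inductive hypothesis applied to the $(\gamma-1)$-component curve $C^c$ we have $h^0(C^c,L_{C^c})=\deg L_{C^c}-g_{C^c}+1$. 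Adding the terms and using $g=g_C+g_{C^c}+\delta_C-1$ from the conventions in \ref{not} yields $h^0(Y,L)=d-g+1$. The main obstacle is the existence of the component $C$ with $e_C\geq\delta_C-1$; once that is secured by the weight counting above, everything else is a routine combination of Lemma~\ref{e}, Fact~\ref{bpfirr}, and the genus formula.
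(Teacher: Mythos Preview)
Your proof is correct and follows essentially the same approach as the paper: reduce to the connected case, induct on the number of components, locate a component $C$ with $d_C\geq 2g_C+\delta_C-1$ by the same counting/contradiction argument, and then apply Lemma~\ref{e}(\ref{e=}) together with the genus formula. The only cosmetic difference is that the paper decomposes $C^c$ into its connected components and applies the inductive hypothesis to each separately, whereas you invoke the already-established reduction step to handle the possibly disconnected $C^c$ in one stroke; both are equally valid.
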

\begin{proof}
Let $X_1,\ldots, X_c$  be the  connected components of $Y$. Then $g=\sum_{i=1}^cg_{X_i}-c+1$ and
$h^0(Y,L)=\sum_{i=1}^ch^0(X_i, L_{X_i})$; therefore it suffices to prove the lemma for a connected curve $X$ of genus
$g$.

We shall use induction on the number of irreducible components of $X$. The base case, $X$ irreducible,
is known (cf. Fact~\ref{bpfirr}). Assume $X$ reducible.
We begin by showing that there exists an irreducible component, $C_1$, 
  of $X$ such that 
\begin{equation}
\label{d1}
d_1\geq 2g_1+\delta_1-1.
\end{equation}
By contradiction, assume the contrary. Then
$$
d=\sum_{i=1}^{\gamma}d_i\leq \sum_{i=1}^{\gamma}(2g_i+\delta_i-2)=2\sum_{i=1}^{\gamma}g_i
+\sum_{i=1}^{\gamma}\delta_i-2\g.
$$
Now, $\sum_{i=1}^{\gamma}\delta_i=2\delta$  and $g=\sum_{i=1}^{\gamma} g_i+\delta -\g +1$.
Therefore
$$
d\leq 2(\sum_{i=1}^{\gamma}g_i+
 \delta - \g)=2(g-1),
$$
contradicting the assumption $d\geq 2g-1$. This proves (\ref{d1}).

Let us write $X=C_1\cup Z$ with $Z=  C_1^c$. Let $Z=Z_1\coprod\ldots\coprod Z_c$, with  
$Z_i$ 
  connected. We  use induction  and get
\begin{equation}
\label{Zi}
h^0(Z_i,L_{Z_i})=d_{Z_i}-g_{Z_i}+1.
\end{equation}
Now, by (\ref{d1}) we can apply Lemma~\ref{e}(\ref{e=}) and obtain
$$
h^0(X,L)= h^0(C_1,L_1)+h^0(Z,L_Z)-\delta_1=d-(g_1+\sum_{i=1}^cg_{Z_i})+c+1-\delta_1
$$
(using $h^0(C_1,L_1)=d_1-g_1+1$ and (\ref{Zi})). Now $g=g_1+\sum_{i=1}^cg_{Z_i}+\delta_1-c$,
hence
$ 
h^0(X,L)= d-g+\delta_1-c+c+1-\delta_1=d-g+1.
$ 
\end{proof}

\begin{example}
\label{Pr}
Fix  $X$ having $\g\geq 2$ components  and genus $g$;   let $d\geq 2g-1$. The theorem of Riemann
fails for all but finitely many
$\md$ with $|\md|=d$. To prove that it will be enough to show
 the following. For every fixed $i\in \{1,\ldots, \g\}$ there exists $m_i$ such that
for every
$\md$ such that $d_i\geq m_i$ and for every $L\in \picX{\md}$ we have $h^0(X,L)>d-g+1$.

So, pick $i=1$, let $m_1:=d+g_1+\dd_1+1$
($\dd_1=C_1\cdot C_1^c$). If $d_1\geq m_1$ we have
$$
d_1\geq d+g_1+\dd_1+1 \geq 2g-1 +g_1+\dd_1+1\geq 2g_1  +g_1+\dd_1 =3g_1+\dd_1\geq 2g_1+1;
$$
hence $h^0(C_1,L_1)=d_1-g_1+1$.
Now, for any $L\in \picX{\md}$ such that $d_1\geq m_1$
(we can adjust the remaining $d_2,\ldots, d_{\g}$ however we like
so that $|\md|=d$)
$$
h^0(X,L)\geq h^0(C_1,L_1)-\dd_1=d_1-g_1+1-\dd_1\geq d+g_1+\dd_1+1-g_1+1-\dd_1
$$
hence $h^0(X,L)\geq d+2>d-g+1$ as wanted.
\end{example}

\begin{example}
\label{Pr2} If $X$ has a separating node  part (\ref{sr2}) of Theorem~\ref{sr}  may fail.
Let $X=C_1\cup C_2$ with $C_1\cdot C_2=1$. Assume $g_1=1$ and $g_2=g-1$ and $d=2g+b$ with $b\geq 0$.
Let $\md=(1,d-1)=(1,2g+b-1)=(1,2g_2+b+1)$, if $g\geq b+3$ one checks that $\md$ is balanced. 
Set $L=(\O_{C_1}(p),L_2)$ such that $p\neq C_1\cap C_2$.
Assume for simplicity that $L_2$ has no base point in $C_1\cap C_2$. Then
$$
h^0(X,L)=h^0(C_1,\O_{C_1}(p))+h^0(C_2,L_2)-1=h^0(C_2,L_2).
$$
Now,  $L$ has a base point in $p$, indeed
$$
h^0(X,L(-p))=h^0(C_1,\O_{C_1} )+h^0(C_2,L_2)-1=h^0(C_2,L_2).
$$
\end{example}

\section{Clifford's Theorem for all degrees}
In this section we prove the following cases of Clifford's theorem: 
Theorem~\ref{clvine}, for curves with two components and every 
balanced multudegree;  Theorem~\ref{cc} for all curves and all balanced line bundles of  degree $2g-2$;  Proposition~\ref{ecl}
for all curves and all degrees, provided the hypothesis that the degree be at most twice the genus is ``uniformly" satisfied  on all irreducible components.

\subsection{Uniform extension}

\begin{prop}[Uniform Clifford]
\label{ecl}
Let $X$ be a connected  curve of genus $g$.
Let $\md=(d_1,\ldots, d_{\g})\in \Z^{\gamma}$ be such that
$0\leq d_i\leq 2g_i$ for every $i=1,\ldots, \gamma$.
\begin{enumerate}[(i)]
\item
\label{ec}
Then $|\md|\leq 2g$ and 
for every $L\in \picX{\md}$ we have
$ h^0(X,L)\leq   \deg L/2 +1.
$ 
\item
\label{ec=}
If equality holds and $|\md|\leq 2g-2$   then 
$L$ has no nonsingular base points
(i.e. if $L$ admits a base point, this point is a node of    $X$).
\end{enumerate}
\end{prop}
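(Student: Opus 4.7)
For the bound $|\md|\leq 2g$, summing the hypothesis $d_i\leq 2g_i$ gives $|\md|\leq 2\sum g_i$, and the genus formula $g=\sum g_i+\delta-\gamma+1$ combined with $\delta\geq \gamma-1$ (from the connectedness of $X$) yields $\sum g_i\leq g$.

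For the Clifford inequality $h^0(X,L)\leq d/2+1$, I would argue by induction on the number of external nodes of $X$ (nodes between distinct components). The base case of no external nodes means $X$ is irreducible, and classical Clifford for nodal irreducible curves (quoted at the start of Subsection~\ref{index}) gives the bound. Otherwise, pick an external node $n$ and let $\nu\colon Y\to X$ be its partial normalization: all arithmetic genera $g_i$ and multidegrees $d_i$ are unchanged, so the hypothesis is preserved, and $h^0(X,L)\leq h^0(Y,\nu^*L)$. If $n$ is non-separating, $Y$ is connected with one fewer external node and the induction closes immediately.

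The substantial case is a separating $n$: now $Y=Y_1\coprod Y_2$ with $g_{Y_1}+g_{Y_2}=g$ and $d_{Y_1}+d_{Y_2}=d$. Applying the induction to each piece gives $h^0(Y,\nu^*L)\leq d/2+2$, off by one. I would save this unit via Lemma~\ref{d1l} together with Remark~\ref{npr}(\ref{npb}): writing $\nu^{-1}(n)=\{p,q\}$ with $p\in Y_1$, $q\in Y_2$, the neutrality $p\sim_{\nu^*L}q$ is equivalent to both $p,q$ being base points of their respective restrictions. If at least one of them fails to be a base point, Lemma~\ref{d1l} forces $h^0(X,L)\leq h^0(Y,\nu^*L)-1\leq d/2+1$ and we are done.

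The main obstacle is the residual sub-case where both $p$ and $q$ are base points. There I would apply the inductive hypothesis to the twisted bundles $L_{Y_1}(-p)$ and $L_{Y_2}(-q)$, which have the same $h^0$ as $L_{Y_1}$, $L_{Y_2}$ by the base point property, obtaining $(d_{Y_k}-1)/2+1$ on each side and summing to the desired $d/2+1$. This requires $d_{i(p)}\geq 1$ and $d_{i(q)}\geq 1$, so that the twisted multidegrees remain in $[0,2g_j]$. The degenerate subcases $d_{i(p)}=0$ or $d_{i(q)}=0$ are handled via Fact~\ref{cl00} (if $h^0(L_{C_{i(p)}})=1$ then $L_{C_{i(p)}}=\O_{C_{i(p)}}$ has no base points, contradicting $p$ being a base point) together with Remark~\ref{ur} applied to $Z=C_{i(p)}$ (collapsing the zero-degree component to a strictly smaller bound).

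For part~(ii), suppose $h^0(X,L)=d/2+1$ with $d\leq 2g-2$ and let $p\in C_{i(p)}$ be a nonsingular base point. Every section of $L$ vanishes at $p$, so $h^0(X,L(-p))=h^0(X,L)=d/2+1$ and $\deg L(-p)=d-1$. When $d_{i(p)}\geq 1$, the multidegree of $L(-p)$ still satisfies the hypothesis of~(i), so part~(i) applied to $L(-p)$ gives $h^0(X,L(-p))\leq (d-1)/2+1$, strictly smaller than $d/2+1$ — a contradiction. The residual case $d_{i(p)}=0$ forces, again by Fact~\ref{cl00} and Remark~\ref{ur}, that $L_{C_{i(p)}}=\O_{C_{i(p)}}$ (the other alternatives drop $h^0(X,L)$ below $d/2+1$, contradicting equality), but $\O_{C_{i(p)}}$ has no base points, contradicting the assumption that $p$ is a base point of $L$.
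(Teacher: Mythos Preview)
Your approach differs from the paper's in a genuine way: you induct on the number of external nodes and normalize one node at a time, whereas the paper inducts on the number of irreducible components, decomposes $X=Z_1\cup Z_2$ with both pieces connected (one irreducible), and proves (i) and (ii) \emph{simultaneously}. The paper then uses the inductive ``no nonsingular base points'' from (ii) to guarantee the $-1$ drop when gluing $Z_1$ to $Z_2$ via Lemma~\ref{d1l}. Your separation of (i) and (ii) is what forces you into the awkward degenerate sub-case.

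There is a real gap in that degenerate sub-case. Suppose $n$ is separating, both branches $p\in C:=C_{i(p)}\subset Y_1$ and $q\in Y_2$ are base points of $\nu^*L$, and $d_C=0$. Your proposed fix does not work as written:
\begin{itemize}
\item The assertion ``$L_C=\O_C$ has no base points, contradicting $p$ being a base point'' conflates two different notions: $p$ is a base point of $L_{Y_1}$, not of $L_C$. Knowing that the constant section of $\O_C$ does not vanish at $p$ says nothing unless you can extend it to a global section of $L_{Y_1}$, which is exactly what is in question.
\item Remark~\ref{ur} gives $h^0(Y_1,L_{Y_1})\leq h^0(V_1,L_{V_1})$ with $V_1=\overline{Y_1\smallsetminus C}$; but since $d_C=0$ we have $d_{V_1}=d_{Y_1}$, so the Clifford bound on $V_1$ is the \emph{same} bound $d_{Y_1}/2+1$, not a strictly smaller one. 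Worse, $V_1$ may be disconnected, in which case summing the componentwise bounds is larger, not smaller.
\end{itemize}
The same conflation appears in your treatment of~(ii) when $d_{i(p)}=0$: you claim that $L_C\neq\O_C$ forces $h^0(X,L)<d/2+1$ via Remark~\ref{ur}, but that remark only yields $h^0(X,L)\leq h^0(C^c,L_{C^c})\leq d/2+1$, with no strict improvement; and even if $L_C=\O_C$, you have not shown the constant section extends to $X$, so you cannot conclude $p$ is not a base point of $L$.

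The gap is fixable (for instance by propagating the ``sections vanish on $C$'' observation through adjacent degree-zero components until you hit a component of positive degree where you can legitimately twist, in the spirit of the $V$-construction in the proof of Theorem~\ref{cl4}), but this requires an additional inductive layer you have not supplied. The paper's simultaneous induction on (i) and (ii) sidesteps the issue entirely: once you know by induction that $L_{Z_1}$ has no nonsingular base points whenever Clifford equality holds on $Z_1$, the gluing branch in $Z_1\cap Z_2$ is never a base point, and Lemma~\ref{d1l} gives the needed $-1$ directly.
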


\begin{proof}
As we said in Subsection~\ref{index} we may assume $X$ reducible. Set $|\md|=d$. 

Let us prove that    $d\leq 2g$.  
 We have
$ 
d=\sum_{i=1}^{\gamma}d_i\leq \sum_{i=1}^{\gamma} 2g_i .
$ 
Let $\delta$ be the number of nodes of $X$ that lie in two different irreducible components.
Then
$g=\sum_{i=1}^{\gamma}g_i+\delta -\g +1$. On the other hand, as $X$ is connected, we have
$ 
\delta\geq \g -1.
$ 
Therefore
$ 
2g-d\geq 2g-2\sum_{i=1}^{\gamma}g_i=2(\delta- \g+ 1)\geq  0,$ as claimed. 

We continue using induction on the number of irreducible
components.

By Remark~\ref{ord}, we can decompose $X=Z_1\cup Z_2$ so that the $Z_i$ are connected. We set $l_i:=h^0(Z_i, L_{Z_i})$;
 by  the induction assumption,
$
l_i\leq \frac{d_{Z_i}}{2}+1
$
and if equality holds,   $L_{Z_i}$ has no nonsingular base points.
We distinguish three cases.

\noindent
{\bf Case 1:} $l_i<\frac{d_{Z_i}}{2}+1$ for both $i=1,2$.

If
$d_{Z_1}$ and $d_{Z_2}$ are even, then $l_i\leq \frac{d_{Z_i}}{2}$. Hence $
h^0(X,L)\leq l_1+l_2\leq \frac{d}{2}.$
 
If  $d_{Z_1}$ is even and $d_{Z_2}$ is odd, then $l_1\leq \frac{d_{Z_1}}{2}$ and $l_2\leq \frac{d_{Z_2}+1} {2}$. Hence
$
h^0(X,L)\leq l_1+l_2\leq \frac{d+1}{2}< \frac{d}{2} +1.
$

Finally, assume  $d_{Z_1}$ and $d_{Z_2}$   odd. Then $l_i\leq \frac{d_{Z_i}+1}{2}$ hence 
$$
h^0(X,L)\leq l_1+l_2\leq   \frac{d}{2} +1.
$$
If equality holds   we get $l_i= \frac{d_{Z_i}+1}{2}$   for $i=1,2$, and 
$ 
h^0(X,L)= l_1+l_2.
$ 
Therefore $L_{Z_1}$and $L_{Z_2}$ have a base point over every node in $Z_1\cap Z_2$.
This implies that
$
Z_1\cdot Z_2=1.
$
Indeed, by induction, the Clifford inequality  holds on $Z_i$,
yielding that $L_{Z_i}$ can have at most one base point
(indeed, if $L_{Z_i}$ had two base points, $p$ and $p'$, then
$h^0(L_{Z_i}  (-p-p')) =h^0(L_{Z_i})=\frac{d_{Z_i}+1}{2}>\frac{d_{Z_i}-2}{2}+1$). 

Let $q_i\in Z_i$ be the branch of the node  $n=Z_1\cap Z_2$. 
Let $p\in X$ be a 
point with $p\neq n$, say $p\in Z_1$.
If $p$ is a base point for $L$ then it is also a base point for $L_{Z_1}$, but this is not possible as
we just proved that the only   base point of $L_{Z_1}$ is $q_1$.

The proof of (\ref{ec}) and (\ref{ec=})  in Case 1 is complete.

\noindent{\bf Case 2:} $l_1=\frac{d_{Z_1}}{2}+1$ and $l_2<\frac{d_{Z_2}}{2}+1$.

By induction, $L_{Z_1}$ has no  nonsingular base point. Therefore, by Lemma~\ref{d1l}  
$$
h^0(X,L)\leq l_1+l_2-1< \frac{d_{Z_1}}{2}+1  +  \frac{d_{Z_2}}{2}+1 -1=\frac{d}{2} +1.
$$
So, in this case strict inequality always holds and we are done.

\noindent{\bf Case 3:} $l_i=\frac{d_{Z_i}}{2}+1$ for both $i=1,2$.

By induction $ L_{Z_i}$ is  free from nonsingular base points. We get, again by Lemma~\ref{d1l},
$$
h^0(X,L)\leq l_1+l_2-1=  \frac{d_{Z_1}}{2}+1  +  \frac{d_{Z_2}}{2}+1 -1=\frac{d}{2} +1.
$$
Now equality holds  if and only if $h^0(X,L)= l_1+l_2-1$.
Let $p\in X$ be a nonsingular point, say $p\in Z_1$. As  $p$ is not a base point of $L_{Z_1}$, we have
$$
h^0(X,L(-p))\leq h^0(Z_1,L_{Z_1}(-p))+l_2-1= l_1-1+l_2-1=h^0(X,L)-1
$$
hence  $p$ is not a base point of $L$, so we are done.
\end{proof}

\begin{cor}
\label{ecor} Assumptions as in Proposition~\ref{ecl}. Assume  $0<|\md|<2g-2$.
If there exists 
$L\in \picX{\md}$ such that $\Cl  L=0$,
then for every decomposition
 $X=Z_1\cup Z_2$ with $Z_1$ connected and   $Z_2$ irreducible, we have
\begin{enumerate}[(a)]
\item
\label{} 
$ Z_1\cdot Z_2 \leq 2,$  
\item
\label{} 
If   $d_{Z_1}$ and $d_{Z_2}$  are even,   then  $\Cl  L_{Z_i} =0$ and
$h^0(Z_i, L_{Z_i}(-Z_1\cap Z_2))=h^0(Z_i, L_{Z_i})-1$, for $i=1,2 $.
\item
\label{}
If $d_{Z_1}$ and $d_{Z_2}$   are odd, then $Z_1\cdot Z_2  =1$   and  $\Cl  L_{Z_i}(-Z_1\cap Z_2) =0$ for $i=1,2$.
\end{enumerate}
\end{cor}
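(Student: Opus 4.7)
The plan is to trace the three-case analysis in the proof of Proposition~\ref{ecl}, now imposing the equality $h^0(X,L)=d/2+1$ (which is exactly $\Cl L=0$) to pin down which cases can occur and extract the extra information they carry. Fix any decomposition $X=Z_1\cup Z_2$ with $Z_1$ connected and $Z_2$ irreducible, write $l_i:=h^0(Z_i,L_{Z_i})$ and $\dd:=Z_1\cdot Z_2$.

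First, I would triage Cases 1--3 from the proof of Proposition~\ref{ecl}. Case 2 gives strict inequality; the even/even and mixed-parity subcases of Case 1 also produce $h^0(X,L)\leq d/2$ and $h^0(X,L)\leq(d+1)/2$ respectively. Hence only two scenarios survive: Case 1 with both $d_{Z_i}$ odd, or Case 3 (which automatically forces both $d_{Z_i}$ even, since $l_i=d_{Z_i}/2+1$ must be an integer).

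In the odd--odd scenario, the argument already contained in Case 1 of Proposition~\ref{ecl} shows $\dd=1$ and that the branch $q_i\in Z_i$ of the unique node is a base point of $L_{Z_i}$. Using $l_i=(d_{Z_i}+1)/2$ together with $h^0(L_{Z_i}(-q_i))=l_i$, a direct computation gives $\Cl L_{Z_i}(-q_i)=(d_{Z_i}-1)-2l_i+2=0$, which together with $\dd=1$ yields (a) and (c).

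In the even--even scenario, $l_i=d_{Z_i}/2+1$ yields $\Cl L_{Z_i}=0$ immediately, the first half of (b); by Proposition~\ref{ecl}(ii) each $L_{Z_i}$ is then free from nonsingular base points. Normalizing $X$ at the $\dd$ shared nodes yields $Y=Z_1\coprod Z_2$ with $M=(L_{Z_1},L_{Z_2})$; since the branches $p_i\in Z_1$ and $q_i\in Z_2$ are not base points, $p_i\not\sim_M q_i$ by Remark~\ref{npr}(B). The equality $h^0(X,L)=l_1+l_2-1$ combined with Lemma~\ref{d2} then forces $p_i\sim_{L_{Z_1}}p_j$ and $q_i\sim_{L_{Z_2}}q_j$ for all $i,j$. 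Since $Z_2$ is irreducible with $\Cl L_{Z_2}=0$, Lemma~\ref{c} applied to $\{q_1,\ldots,q_\dd\}$ gives $\dd\leq\Cl L_{Z_2}+2=2$, completing (a). Finally, iterating the equivalences $p_i\sim_{L_{Z_1}}p_j$ together with the fact that $p_1$ is not a base point of $L_{Z_1}$ gives $h^0(L_{Z_1}(-\sum_i p_i))=h^0(L_{Z_1}(-p_1))=l_1-1$, and symmetrically on $Z_2$; this is the second half of (b).

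The main obstacle is the passage from the qualitative equivalence data produced by Lemma~\ref{d2} to a genuine numerical bound on $\dd$: this is precisely where the hypothesis ``$Z_2$ irreducible'' becomes essential, because Lemma~\ref{c} is only available on an irreducible curve. A small boundary subcase where $L_{Z_2}$ happens to be trivial (so $l_2=1$ and Lemma~\ref{c} does not directly apply) must be handled separately via Remark~\ref{ur} and Fact~\ref{cl00}, using that $L_{Z_2}=\O_{Z_2}$ forces $h^0(X,L)\leq l_1$ and one reduces to a symmetric Clifford-type bound inside $Z_1$.
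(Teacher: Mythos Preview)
Your proposal is correct and follows essentially the same route as the paper: both trace the three-case analysis of Proposition~\ref{ecl}, discard Case~2 and the non-odd-odd subcases of Case~1, and in Case~3 use the equivalences supplied by Lemma~\ref{d2} together with Clifford on the irreducible component $Z_2$ to force $\dd\leq 2$. The only cosmetic difference is that you invoke Lemma~\ref{c} by name, whereas the paper inlines that same argument (writing out $h^0(L_{Z_2}(-q-q'-q''))=l_2-1$ and comparing with the Clifford bound for $L_{Z_2}(-q-q'-q'')$ on $Z_2$); the paper does not separately isolate the $l_2=1$ boundary subcase you flag.
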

\begin{proof} We use the proof 
of Proposition~\ref{ecl}.  In Case 1, $\Cl L=0$  exactly when the
$d_{Z_i}$    are both odd,
$Z_1$ and $Z_2$ intersect in only one point, and 
 $$ h^0(Z_i,L_{Z_i})=h^0(Z_i,L_{Z_i}(-q_i))=\frac{d_{Z_i}+1}{2}=\frac{d_{Z_i}-1}{2}+1.$$
So $\Cl  (L_{Z_i}(-q_i)) =0$.
Observe that we did not   use the irreducibility of $Z_2$.

In Case 2 equality never holds. 

In Case 3 we  have $\Cl  L=0$ exactly when the $d_{Z_i}$   are even,
 $\Cl  L_{Z_i} =0$ for
$i=1,2$, and $h^0(X,L)=h^0(Z_1,L_{Z_1})+h^0(Z_2,L_{Z_2})-1$.
Notice that by Lemma~\ref{d2} this implies that for every pair of points $q,q'\in Z_1\cap Z_2\subset Z_2$ we have
$q\sim _{L_{Z_2}}q'$ (and similarly for $Z_1$).

To complete the proof, we need to show that $ Z_1\cdot Z_2 \leq 2$. By contradiction, assume $Z_1\cdot Z_2 \geq 3;$
then a relation  $q\sim _{L_{Z_2}} q'\sim _{L_{Z_2}}q ''$ holds on $Z_2$.
Observe also that $L_{Z_2}$ has no nonsingular base points, as $\Cl  L_{Z_2} =0$. Therefore  
$$
h^0(Z_2,L_{Z_2}(-q-q'-q''))=h^0(Z_2,L_{Z_2}(-q))=l_2-1=\frac{d_{Z_2}}{2}.
$$
But    $Z_2$ is irreducible, hence Clifford applies to $L_{Z_2}(-q-q'-q'')$, and we get
$$
h^0(Z_2,L_{Z_2}(-q-q'-q''))\leq \frac{d_{Z_2}-3}{2}+1<\frac{d_{Z_2}}{2},$$  
a contradiction.
\end{proof}

\subsection{Curves with two components}
Clifford's inequality holds for curves with two irreducible components, by the following result.
\begin{thm}
\label{clvine}
Let $X=C_1\cup C_2$ be a semistable curve of genus $g\geq 2$.
Let $0\leq d\leq 2g$ and $\md \in \BXd$.  Then
for every $L\in \picX{\md}$
 we have \begin{equation}
\label{cleq}
h^0(X,L)\leq d/2+1.
\end{equation}

 \end{thm}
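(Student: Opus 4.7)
The approach is a case analysis on the position of the bidegree $(d_1, d_2)$. The boundary cases are immediate: $d=0$ is Fact~\ref{cl00}, while $d\geq 2g-1$ follows from Theorem~\ref{sr} (Balanced Riemann) combined with $d\leq 2g$. So I restrict to $1\leq d\leq 2g-2$, i.e., $d\leq w$. If moreover $0\leq d_i\leq 2g_i$ for both $i$, Proposition~\ref{ecl} (Uniform Clifford) closes the proof.

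Otherwise, some component has $d_i$ outside $[0,2g_i]$. The balanced inequalities $dw_i/w-\delta/2\leq d_i\leq dw_i/w+\delta/2$ together with $0\leq d\leq w$ force $-\delta/2\leq d_i\leq w_i+\delta/2$, and by the $C_1\leftrightarrow C_2$ symmetry of the statement, I may reduce to two cases: $d_2>2g_2$, or $d_1<0$.

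If $d_2>2g_2$, then $L_2$ is nonspecial by Fact~\ref{bpfirr}, and Lemma~\ref{e}(i) with $C=C_2$ and $e_C=d_2-2g_2$ gives
\begin{equation*}
h^0(X,L)\leq (d_2-g_2+1)+h^0(C_1,L_1)-\min\{\delta,\,e_C+1\}.
\end{equation*}
I bound $h^0(C_1,L_1)$ by irreducible Clifford (if $0\leq d_1\leq 2g_1$) or by Fact~\ref{bpfirr} (if $d_1>2g_1$, after relabeling so that $e_C$ is the maximum excess). A direct calculation in each branch of the $\min$ reduces the target $h^0(X,L)\leq d/2+1$ to the inequality $d_2\leq w_2+\delta$, which follows from the balanced upper bound $d_2\leq dw_2/w+\delta/2\leq w_2+\delta/2$.

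If instead $d_1<0$, then $h^0(C_1,L_1)=0$, so every section of $L$ vanishes along $C_1$ and in particular at the $\delta$ branches $q_1,\ldots,q_\delta\in C_2$; thus $h^0(X,L)\leq h^0(C_2,L_2(-\sum_j q_j))$, a line bundle of degree $d_2-\delta$ on the irreducible curve $C_2$. Depending on whether this degree is negative, lies in $[0,2g_2-2]$, or is at least $2g_2-1$, I apply vanishing, irreducible Clifford, or Fact~\ref{bpfirr}; in each subrange the target bound follows from the balanced lower bound $d_1\geq -\delta/2$ together with $d\leq w$. The main technical obstacle is the bookkeeping in the first case, where one must verify that the balanced upper bound on $d_2$ combined with $d\leq w$ provides exactly the slack needed to absorb the deficit $\min\{\delta,\,e_C+1\}$ of Lemma~\ref{e}(i); this is precisely where the balanced hypothesis enters essentially.
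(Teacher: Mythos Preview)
Your proof is correct and follows essentially the paper's route: both reduce to $1\leq d\leq 2g-2$ via Theorem~\ref{sr}, invoke Proposition~\ref{ecl} when $0\leq d_i\leq 2g_i$, handle $d_1<0$ by passing to $L_2(-G_2)$ on $C_2$ (the paper's Case~0), and treat the excess case $d_i>2g_i$ via Lemma~\ref{e} together with the balanced bound $d_i\leq w_i+\delta/2$; your organization is slightly cleaner in that you merge the paper's Cases~1 and~2 (which split on whether $e_C\geq\delta-1$) into a single application of Lemma~\ref{e}(\ref{emin}) by carrying the $\min\{\delta,e_C+1\}$ through the estimate, thereby bypassing Lemma~\ref{ae}. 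One small fix: Fact~\ref{cl00} covers $d=0$ only when $\md=(0,0)$, but balanced multidegrees with a negative entry exist when $\delta\geq 2$; cite Lemma~\ref{cl0} instead, or note that your $d_1<0$ argument works verbatim for $d=0$.
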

\begin{add}
\label{add2} Let $\epsilon:=1+\max\{d_1-2g_1,d_2-2g_2,0\}$, and $\beta:=\min\{C_1\cdot C_2,\epsilon\}$. 
If $C_1\cdot C_2\geq 2$, then
$
h^0(X,L)\leq h^0(C_1,L_1)+h^0(C_2,L_2)-\beta\leq d/2+1.
$
\end{add}

\begin{proof}
Set $l:=h^0(X,L)$, and for $i=1,2$,\   $L_i:=L_{C_i}$,\  $l_i:=h^0(C_i,L_i)$.
As usual, set $\dd:=C_1\cdot C_2$.
By Theorem~\ref{sr} we can  assume $d\leq 2g-2$.
We begin with

\noindent
{\bf Case 0.} {\it If $d_1<0$ then (\ref{cleq}) holds, with strict inequality if $d\leq 2g-2$.}

As $d_1<0$ we have  $d_2>0$. Since $\md$ is balanced, 
\begin{equation}
\label{eqd1}
d_1\geq \frac{dw_1}{w}-\df\geq  -\df
\end{equation}
($\frac{dw_1}{w}\geq 0$ as $X$ is semistable).
Of course $l_1=0$, therefore, denoting by $G_2\in \Div C_2$ the degree $\dd$ divisor cut
on $C_2$ by $C_1$, a section of $L$ has to vanish on $G_2$, i.e. 
\begin{equation}
\label{eq2}
h^0(X,L)=h^0(C_2,L_2(-G_2)).
\end{equation}
Note that $\deg L_2(-G_2)=d_2-\dd$.
If $d_2-\dd< 0$ we get $h^0(X,L)= 0$ and we are done.
If  $0\leq d_2-\dd\leq 2g_2$ we can use Clifford on $C_2$ and obtain
$$
h^0(C_2,L_2(-G_2))\leq \frac{d_2-\dd}{2}+1=\frac{d-d_1-\dd}{2}+1\leq  \frac{d+\dd/2-\dd}{2}+1
$$
(using (\ref{eqd1})). Combining the above with (\ref{eq2}) yields
$$
h^0(X,L)\leq \frac{d }{2}+1-\frac{\dd }{4}<\frac{d }{2}+1
$$ 
as stated.
Finally, it remains to treat the case $d_2-\dd\geq 2g_2$, i.e.
$$
l=h^0(C_2,L_2(-G_2))= d_2-\dd-g_2+1.
$$
We argue by contradiction,   assuming that $l\geq \frac{d}{2}+1$.
This is to say, by (\ref{eq2}),
$$
d_2-\dd-g_2+1\geq \frac{d}{2}+1,
$$
hence (using $d=d_1+d_2$)
$$
\frac{d_2-d_1}{2}-\dd-g_2\geq 0,
$$
equivalently
\begin{equation}
\label{i1}
d_2-d_1-2\dd-2g_2\geq 0.
\end{equation}
On the other hand, as $\md$ is balanced, we have
$$
d_2\leq\frac{dw_2}{w}+\df \  \  \  \text{ and } \  \  \    d_1\geq\frac{dw_1}{w}-\df.
$$
Using these two inequalities we get
$$
d_2-d_1-2\dd-2g_2\leq \frac{dw_2}{w}+\df- \frac{dw_1}{w}+\df -2\dd-2g_2=\frac{d}{w}(w_2-w_1)-\dd-2g_2.
$$
Now, $w_2-w_1=2g_2-2g_1$ and $\frac{d}{w}\leq 1$ (as $d\leq 2g-2=w$). We obtain
$$
d_2-d_1-2\dd-2g_2\leq\frac{d}{w}(2g_2-2g_1)-\dd-2g_2\leq - \frac{2dg_1}{w}-\dd<0
$$
contradicting (\ref{i1}). This finishes Case 0.

For the rest of the proof, we can restrict to $d_i\geq 0$ for $i=1,2$.
By  Propositions~\ref{ecl} and \ref{ic} (\ref{ic01}),
we can assume that $d_i\geq 2g_i+1$ for at least one $i$, so let $d_1\geq 2g_1+1$.
Then 
 $l_1=d_1-g_1+1.$

\noindent {\bf Case 1.}  {\it If $d_1\geq 2g_1+\delta -1$, then  (\ref{cleq}) holds, with strict inequality 
if $d\leq 2g-1$.}

By Lemma~\ref{e}(\ref{e=}),
\begin{equation}
\label{vd}
l=l_1+l_2-\delta.
\end{equation}
\noindent
{\it Subcase 1a.} 
$d_2\geq 2g_2$. Hence 
$ 
l_2 = d_2-g_2+1.
$ 
Combining with (\ref{vd}) we have
$$
l=d_1-g_1+1+d_2-g_2+1-\delta=d-(g_1+g_2+\dd -1)+1= d-g+1.
$$
Now $d\leq 2g$, hence
$$
l= d-g+1\leq d-\frac{d}{2}+1=\frac{d}{2}+1.
$$
So we are done.
Note that equality holds if and only if $d=2g$.
 
\noindent
{\it Subcase 1b.} $d_2< 2g_2$. By Proposition~\ref{ecl},  $l_2\leq \frac{d_2}{2}+1$.
Set
$$
d_1=2g_1+\delta -1+a 
$$
so that $a\geq 0$ and 
\begin{equation}
\label{vg}
g_1=\frac{d_1-\dd+1-a}{2}.
\end{equation}
Using  (\ref{vd}) and (\ref{vg}) we get
$$
l\leq d_1-g_1+1+ \frac{d_2}{2}+1-\dd= d_1-\frac{d_1-\dd+1-a}{2}+2+ \frac{d_2}{2}-\dd,
$$
hence
$$
l\leq   \frac{d}{2}+1+\frac{1-\dd+a}{2}.
$$
The subsequent Lemma~\ref{ae} yields
\begin{displaymath}
a \leq \left\{ \begin{array}{l}
\frac{\delta}{2}-1, \  \text{ if }  \  \dd  \text{ is even}\\
\\
\frac{\delta -1}{2}-1, \text{ if }  \ \dd  \text{ is odd}\\
\end{array}\right.
\end{displaymath}
Hence $1+a\leq \frac{\dd}{2}$,
so that $1+a-\dd\leq -\frac{\dd}{2} < 0$.
We conclude 
$ 
h^0(X,L)<\frac{d}{2}+1
$ 
and we are done.

\noindent {\bf Case 2.}  {\it Assume $2g_1+1\leq d_1< 2g_1+\delta -1$.}

Set $d_1=2g_1+e_1$ where $1\leq e_1\leq \dd -2$. Hence
\begin{equation}
\label{v2g}
g_1=\frac{d_1-e_1}{2}.
\end{equation}
By Lemma~\ref{e} we have 
\begin{equation}
\label{v2d}
l\leq l_1+l_2-e_1-1.
\end{equation}
If $d_2\leq 2g_2$, then $l_2\leq \frac{d_2}{2}+1$. Using (\ref{v2g}) we have
$$
l \leq  d_1-g_1+1+\frac{d_2}{2}+1-e_1-1 
=d_1-\frac{d_1-e_1}{2}+\frac{d_2}{2}+1-e_1= 
\frac{d}{2}+1-\frac{e_1}{2}.
$$
Now $e_1\geq 1$ hence
$ 
l<\frac{d}{2}+1
$ 
and we are done. Also,   strict inequality  holds.

If $d_2\geq 2g_2+1$,
set  $d_2=2g_2+e_2$ with $e_2\geq 1$.
 We can also assume  $e_2\leq \dd -1$, otherwise we
are done by Case 1 (interchanging $C_1$ with $C_2$).

Now the situation is symmetric between $C_1$ and $C_2$, so up to switching them we may assume
$e_1\geq e_2$.
By Lemma~\ref{e} we have, 
$$
l\leq l_1+l_2-e_1-1 =d_1-g_1+1+d_2-g_2+1-e_1-1.
$$
Now, using  (\ref{v2g}) applied also to $C_2$
$$
l\leq d_1-\frac{d_1-e_1}{2}+1
+d_2-\frac{d_2-e_2}{2}+1-e_1-1=\frac{d}{2}+1+\frac{e_2-e_1}{2}.
$$
 As $e_1\geq e_2$ we conclude
$l\leq    \frac{d}{2}+1$. Moreover,  equality holds if $e_1=e_2$ and $l=l_1+l_2-e_1-1$.
\end{proof}

\begin{lemma}
\label{ae} Let $X$ be a semistable curve of genus $g\geq 2$, $d\leq 2g-2$ and $\md\in \BXd$.
Let $Z\subset X$ be a subcurve, set   $d_Z= 2g_Z+\dd_Z -1+a_Z $.
Then 
\begin{displaymath}
a_Z\leq \left\{ \begin{array}{l}
\frac{\delta_Z}{2}-1, \  \text{ if }  \  \dd_Z \text{ is even}\\
\\
\frac{\delta_Z-1}{2}-1, \text{ if }  \ \dd_Z \text{ is odd}\\
\end{array}\right.
\end{displaymath}
\end{lemma}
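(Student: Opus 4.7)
The plan is to read off the bound on $a_Z$ directly from the upper half of the balancing inequality (\ref{BI}), using the hypothesis $d \leq w := 2g-2$ to control the error term, and then to sharpen the bound in the odd case by invoking the integrality of $a_Z$.

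Concretely, I would first apply the upper bound $d_Z \leq \frac{d\,w_Z}{w} + \frac{\dd_Z}{2}$ from the balanced condition in Notation~\ref{bal}. Substituting the definition $d_Z = 2g_Z + \dd_Z - 1 + a_Z$ gives
\[
a_Z \leq \frac{d\,w_Z}{w} - 2g_Z - \frac{\dd_Z}{2} + 1.
\]
Next I would eliminate $g_Z$ by using the identity $w_Z = 2g_Z + \dd_Z - 2$ (so $2g_Z = w_Z - \dd_Z + 2$) from (\ref{wZ}), rewriting the right-hand side as
\[
\frac{d\,w_Z}{w} - w_Z + \frac{\dd_Z}{2} - 1 \;=\; -\frac{(w-d)\,w_Z}{w} + \frac{\dd_Z}{2} - 1.
\]

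The crucial observation is now essentially free: semistability gives $w_Z \geq 0$ by (\ref{DMss}), and the hypothesis $d \leq 2g-2 = w$ gives $w - d \geq 0$, so the first term is $\leq 0$. This yields
\[
a_Z \leq \frac{\dd_Z}{2} - 1,
\]
which is exactly the stated bound when $\dd_Z$ is even.

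Finally, to handle the odd case I would exploit integrality. Since $d_Z$, $g_Z$, $\dd_Z$ are integers, so is $a_Z = d_Z - 2g_Z - \dd_Z + 1$. When $\dd_Z$ is odd, the real bound $\frac{\dd_Z}{2} - 1$ is a half-integer, so it may be rounded down to $\frac{\dd_Z-1}{2} - 1$, giving the sharper claim. There is no real obstacle here: the only subtlety is correctly substituting $w_Z = 2g_Z + \dd_Z - 2$ to make the coefficient of $w_Z$ come out to $(d-w)/w \leq 0$; the rest is arithmetic and a parity argument.
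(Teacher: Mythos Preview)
Your proof is correct and essentially identical to the paper's: both apply the upper balancing inequality $d_Z \leq \frac{d\,w_Z}{w} + \frac{\dd_Z}{2}$, use $d \leq w$ together with $w_Z \geq 0$ to bound $\frac{d\,w_Z}{w} \leq w_Z = 2g_Z + \dd_Z - 2$, and then invoke integrality for the odd case. The only cosmetic difference is that the paper bounds $d_Z$ first and then solves for $a_Z$, whereas you isolate $a_Z$ at the outset.
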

\begin{proof}
We just need to apply (\ref{BI}) and compute, using $d\leq 2g-2=w$:
$$
d_Z\leq \frac{dw_Z}{w}+\frac{\dd_Z}{2}
\leq w_Z+ \frac{\dd_Z}{2}=2g_Z-2+\dd_Z
+\frac{\dd_Z}{2}.
$$
Now the statement follows at once from
$$ d_Z=2g_Z+\dd_Z -1+a_Z \leq 2g_Z+\dd_Z -2 +\frac{\dd_Z}{2}.
$$
\end{proof}
\subsection{Clifford's Theorem in degree $2g-2$}

The following statement summarizes our results for   $d=2g-2$.
\begin{thm}
\label{cc}
Let $X$ be a connected curve of genus $g\geq 2$. Let $\md$ be a multidegree such that $|\md|= 2g-2$.
Assume that one of the following conditions hold.
\begin{enumerate}[(1)]
\item
\label{rcc}
$d_Z\geq 2g_Z-1$ for every  proper subcurve $Z\subsetneq X$.
\item
\label{bcc}
$X$ is semistable and 
 $\md $ is balanced.
\item
\label{ecc} $0\leq d_i\leq 2g_i$, for every $i=1,\ldots, \g$.
\end{enumerate}
Then 
$
h^0(X,L)\leq g 
$
for every $L\in \picX{\md}$.

Moreover, let $L\in \picX{\md}$ be such that  $h^0(X,L)= g $.
If (\ref{rcc}) or (\ref{bcc}) holds, or if (\ref{ecc}) holds with $\sep=\emptyset$,
then
 $L\cong\omega_X$.
\end{thm}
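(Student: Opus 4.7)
By Riemann--Roch and Serre duality for nodal curves, $h^0(X,L)=h^0(X,N)+d-g+1=h^0(X,N)+g-1$, where $N:=\omega_X\otimes L^{-1}$ has degree zero. So the inequality $h^0(X,L)\le g$ is equivalent to $h^0(X,N)\le 1$, and the equality case $h^0(X,L)=g$ translates to $N\cong \O_X$, i.e.\ $L\cong \omega_X$. My plan is to establish this in each of the three cases by analysing the zero locus of a nonzero section $s\in H^0(X,N)$.

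For condition (1), let $Y\subseteq X$ be the (possibly empty, possibly disconnected) maximal subcurve on which $s$ vanishes identically. If $Y\ne \emptyset$, then $s_{|Y^c}$ is a nonzero section of $N_{Y^c}(-Y\cap Y^c)$ not vanishing identically on any component of $Y^c$, so $\deg_{Y^c}N\ge \delta_Y$. On the other hand, the hypothesis of (1) applied to each connected component $Z_j$ of $Y^c$ gives $d_{Z_j}\ge 2g_{Z_j}-1$, and summing these, using $g_{Y^c}=\sum g_{Z_j}-c+1$, yields $\deg_{Y^c}N=w_{Y^c}-d_{Y^c}\le \delta_{Y^c}-1=\delta_Y-1$. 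This contradicts the previous inequality, so $Y=\emptyset$; then the zero divisor of $s$ has degree $\deg N=0$, so $s$ is nowhere vanishing and $N\cong\O_X$.

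For condition (2), the balanced inequality~(\ref{BI}) with $d=w$ gives, for every connected proper subcurve $Z$,
\[
d_Z \ge d\,\frac{w_Z}{w}-\frac{\delta_Z}{2}=w_Z-\frac{\delta_Z}{2}=2g_Z-2+\frac{\delta_Z}{2}\ge 2g_Z-\tfrac{3}{2},
\]
and integrality forces $d_Z\ge 2g_Z-1$ whenever $w_Z>0$; the remaining case ($Z$ a single exceptional component, by quasistability of balanced multidegrees) is immediate from $d_E=1$. Thus condition (2) implies (1) on connected proper subcurves, which suffices for the argument above.

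For condition (3), the bound $h^0(X,L)\le d/2+1=g$ is Proposition~\ref{ecl}(i). To treat the equality case under $\sep=\emptyset$, I repeat the section argument: if $Y\ne\emptyset$ is the maximal vanishing subcurve of $s\in H^0(X,N)$, the analysis above gives $d_Y\ge w_Y+\delta_Y=2g_Y-2+2\delta_Y$, while (3) componentwise yields $d_Y\le 2\sum_{C_i\subset Y}g_i$. Combining these with the genus formula $g_Y=1+\sum g_i+\delta_Y^{\mathrm{int}}-|I_Y|$ (where $|I_Y|$ and $\delta_Y^{\mathrm{int}}$ are the number of components and internal nodes of $Y$) gives $\delta_Y\le |I_Y|-\delta_Y^{\mathrm{int}}\le c_Y$, the last inequality being the standard dual-graph bound with $c_Y$ the number of connected components of $Y$. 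But $\sep=\emptyset$ forces $\delta_{Y_j}\ge 2$ on each connected component $Y_j\subsetneq X$, and since no nodes can lie between distinct connected components of $Y$, $\delta_Y=\sum \delta_{Y_j}\ge 2c_Y$, contradicting $\delta_Y\le c_Y$. Hence $Y=\emptyset$ and $L\cong\omega_X$. The main difficulty is this last balancing act, tying the Clifford-type upper bound on $d_Y$ from (3) to the graph-theoretic content of $\sep=\emptyset$; once in place, the argument mirrors that of case~(1).
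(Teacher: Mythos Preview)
Your proof is correct and takes a genuinely different route from the paper, particularly for condition~(\ref{rcc}).

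The paper establishes case~(\ref{rcc}) via a separate Proposition~\ref{2g-2}, which proceeds by a multi-step gluing argument: first invoking Lemma~\ref{er} on proper subcurves, then reducing to $\md=\mdeg\omega_X$ by a counting argument, and finally gluing components one at a time using Lemma~\ref{e}. Your approach instead dualizes at the outset to $N=\omega_X\otimes L^{-1}$ of degree~$0$ and runs a clean vanishing-subcurve argument: the inequality $\deg_{Y^c}N\ge\delta_Y$ (from the section) versus $\deg_{Y^c}N\le\delta_Y-1$ (from hypothesis~(\ref{rcc}) applied to the connected components of~$Y^c$) dispatches the case $Y\ne\emptyset$ in one stroke. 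This is shorter and more conceptual, and it delivers the inequality and the equality characterization simultaneously, whereas the paper treats them in separate steps.

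For case~(\ref{bcc}) both arguments coincide: one checks that balanced implies~(\ref{rcc}) on connected proper subcurves via the same arithmetic. (Your aside about exceptional components is harmless but unnecessary: the estimate $d_Z\ge 2g_Z-2+\delta_Z/2\ge 2g_Z-3/2$ already works for any connected proper $Z$, since $\delta_Z\ge 1$.)

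For the equality case under~(\ref{ecc}) with $\sep=\emptyset$, the paper's argument is actually simpler than yours: since $\delta_i\ge 2$ and $d_i\le 2g_i$, one has $\deg_{C_i}N=2g_i-2+\delta_i-d_i\ge 0$ for every~$i$; as these sum to~$0$, in fact $\mdeg N=\underline{0}$, and Fact~\ref{cl00} finishes. Your vanishing-subcurve argument with the dual-graph bound $|I_Y|-\delta_Y^{\mathrm{int}}\le c_Y$ is correct but more elaborate than needed here. The payoff of your approach is uniformity: the same mechanism handles all three cases, whereas the paper uses three rather different tools (Proposition~\ref{2g-2}, reduction to~(\ref{rcc}), and Fact~\ref{cl00}).
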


\begin{proof}
If assumption (\ref{rcc}) holds, then the theorem is proved in the subsequent Proposition~\ref{2g-2}.
Next,    (\ref{bcc}) implies (\ref{rcc}).
Indeed,  
$$
d_Z\geq w_Z\frac{d}{w}-\frac{\dd _Z}{2} = 2g_Z-2+\dd_Z -\frac{\dd _Z}{2}= 2g_Z-2+\frac{\dd _Z}{2}\geq 2g_Z - \frac{3}{2}.
$$
As $d_Z$ is an integer, we obtain  $d_Z\geq 2g_Z-1$.
 This settles the theorem under hypothesis (\ref{bcc}).

If (\ref{ecc}) holds, the fact that $h^0(L)\leq g$ is a special case of Proposition~\ref{ecl}.

Now let $L$ be such that $h^0(L)= g$. By Riemann-Roch and Serre duality this is equivalent to
\begin{equation}
\label{wL}
h^0( \omega_X\otimes L^{-1})=1.
\end{equation}
Now, $\deg \omega_X\otimes L^{-1}=0$ and we claim that $\mdeg \omega_X\otimes L^{-1}\geq 0$.
Indeed, as $X$ is free from separating nodes, for every $i=1,\ldots, \g $ we have $\dd_i\geq 2$.
Hence
$$
\deg_{C_i}\omega_X\otimes L^{-1}=2g_i-2+\dd_i-d_i\geq 2g_i-d_i\geq 0.
$$ 
Now, by Fact~\ref{cl00}, (\ref{wL}) is possible  if and only if $\omega_X\otimes L^{-1}\cong\O_X$, as claimed.
\end{proof}

\begin{example}
\label{ctcc} The   assumption  that $\sep$ be empty   is indeed necessary
in the last part of Theorem~\ref{cc}, as the present  example shows.
Let $X=C_1\cup C_2$ with $C_1\cdot C_2=1$. Then $2g-2=2g_1+2g_2-2$;
assume $g_1\geq 1$. Let $\md=(2g_1-2, 2g_2)$ and
$L=(\omega_{C_1}, L_2)$ for any $L_2\in \Pic^{2g_2}C_2$.
Then, as $\omega_{C_1}$ is free from base points, by Lemma~\ref{d1l} we have
$$
h^0(X,L)=h^0(C_1,\omega_{C_1})+h^0(C_2,L_{C_2})-1=g_1+(2g_2-g_2+1)-1=g.
$$
\end{example}

The following is a part of Theorem~\ref{cc}.
\begin{prop}
\label{2g-2}
Fix $X$ of genus $g$ and $\md$ such that
 $|\md|= 2g-2$; assume 
  $d_Z\geq 2g_Z-1$
for every   $Z\subsetneq X$.
Then for every $L\in \picX{\md}$ we have
\begin{equation}
\label{eqcl}
h^0(X,L)\leq g.
\end{equation}
If equality holds, then $L=\omega_X$.
\end{prop}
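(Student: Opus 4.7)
The plan is to convert the bound $h^0(X,L) \leq g$ into a bound on $h^0(X,M)$ where $M := \omega_X \otimes L^{-1}$ via Serre duality, and then to exploit the hypothesis to force the multidegree of $M$ to be $\mo$ whenever $M$ admits a nonzero section. Since $X$ is Gorenstein, Riemann--Roch together with Serre duality gives
\begin{equation*}
h^0(X,L) - h^0(X,M) = d - g + 1 = g - 1,
\end{equation*}
so the inequality $h^0(L) \leq g$ is equivalent to $h^0(M) \leq 1$, and the equality case $h^0(L) = g$ corresponds exactly to $h^0(M) = 1$.

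Next I would record that $\deg M = 0$ and that, by the assumption $d_Z \geq 2g_Z - 1$, for every proper subcurve $Z \subsetneq X$ one has
\begin{equation*}
\deg_Z M = w_Z - d_Z \leq (2g_Z - 2 + \dd_Z) - (2g_Z - 1) = \dd_Z - 1.
\end{equation*}

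The key step is then to show that any nonzero $s \in H^0(X,M)$ is not identically zero on any irreducible component. Let $V \subseteq X$ be the union of the components on which $s$ does not vanish identically; $V$ is nonempty since $s \neq 0$. If $V$ were a proper subcurve, then $s|_{V^c} \equiv 0$ would force $s|_V$ to vanish at each of the $\dd_V$ nodes of $V \cap V^c$ (which are smooth points of $V$), yielding $\deg_V M \geq \dd_V$ and contradicting the bound above. Therefore $V = X$, so $\deg_{C_i} M \geq 0$ for every $i$, and the equality $\deg M = 0$ forces $\mdeg M = \mo$.

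Finally, Fact~\ref{cl00} applied to $M \in \picX{\mo}$ yields $h^0(X,M) \leq 1$, with equality if and only if $M \cong \O_X$, i.e.\ $L \cong \omega_X$. Combined with the Serre duality identity, this gives both conclusions of the proposition. The only delicate step is the ``no vanishing on a proper subcurve'' argument, and the hypothesis $d_Z \geq 2g_Z - 1$ is calibrated precisely so that $\deg_Z M$ falls short by exactly one of the $\dd_Z$ zeros forced at the nodes in $Z \cap Z^c$; any weaker hypothesis would break the contradiction.
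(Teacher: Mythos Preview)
Your proof is correct and is considerably shorter than the paper's own argument for this proposition. The paper proves Proposition~\ref{2g-2} directly on $L$: it first shows (Step~1) that if some component has $d_i\geq 2g_i+\dd_i-1$ then Lemma~\ref{e}(\ref{e=}) gives $h^0(L)=d-g+1=g-1$; next (Step~2) that the complementary case forces $\md=\mdeg\omega_X$; and finally (Step~3) it bounds $h^0(L)$ by gluing one component at a time via Lemma~\ref{e}, with the uniqueness part of that lemma pinning down the equality case. Your route instead dualizes to $M=\omega_X\otimes L^{-1}$, turns the hypothesis into $\deg_ZM\leq \dd_Z-1$, and uses the support-of-a-section argument (this is exactly Remark~\ref{S1} and the method behind Lemma~\ref{cl0}) to force $\mdeg M=\mo$, after which Fact~\ref{cl00} finishes. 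The paper itself records this duality (Theorem~\ref{c0} is deduced from Theorem~\ref{cc} by Serre duality) and uses the same multidegree-positivity idea for the equality case of Theorem~\ref{cc}(\ref{ecc}), but it does not exploit it to shortcut the proof of Proposition~\ref{2g-2}. Your argument buys brevity and avoids the combinatorial gluing; the paper's approach is more explicit about how $h^0$ drops component by component, which is the viewpoint used elsewhere (e.g.\ in the proof of Theorem~\ref{clvine}).
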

\begin{proof} 
We set $l=h^0(X,L)$, $l_i=h^0(C_i,L_i)$ and for any subcurve $Z\subset X$, $l_Z=h^0(Z,L_Z)$.
The hypothesis allows us to apply Lemma~\ref{er}, getting
\begin{equation}
\label{Step2}
l_Z=d_Z-g_Z+1 
\end{equation}
for every   $Z\subsetneq X$.

\noindent
{\bf Step 1.} {\it \  If there exists $i$ such that $d_i\geq 2g_i+\dd_i-1$
(in particular, $\md \neq \mdeg \omega_X$), then (\ref{eqcl}) holds
with strict inequality. }

Assume $d_1\geq 2g_1+\dd_1-1$. We can apply Lemma~\ref{e} to $X=C_1\cup Z$ where $Z= C_1^c$.
Using (\ref{Step2}) we  obtain
$$
l=l_1+l_Z-\dd_1=d_1-g_1+1+d_Z-g_Z+1-\dd_1=d-(g_1+g_Z+\dd_1-1)+1.
$$
Now, $g=g_1+g_Z+\dd_1-1$ hence $l=d-g+1=g-1<\frac{d}{2}+1$, as claimed.

\noindent
{\bf Step 2.} {\it \  If   $d_i\leq 2g_i+\dd_i-2$ for every $i$, then $\md = \mdeg \omega_X$. }

Set $d_i=2g_i+e_i$, then 
\begin{equation}
\label{}
\sum_{i=1}^{\g} e_i = 2(\dd -\g).
\end{equation}
This is trivial:   on the one hand $d=2g-2=\sum_{i=1}^{\g} (2g_i+e_i)$. On the other 
$2g-2=2\sum_{i=1}^{\g} g_i+2\dd -2\g$. So it suffices to compare the two identities.

Now, as $e_i\leq \dd_i-2$ by assumption, we have
$$
2(\dd -\g)=\sum_{i=1}^{\g} e_i\leq \sum_{i=1}^{\g}(\dd_i-2)=\sum_{i=1}^{\g}\dd_i-2\g=2\dd-2\g
$$
therefore equality must hold, which can only happen if $e_i= \dd_i-2$ for every $i$.
This is of course the same as saying $d_i=\deg_{C_i}\omega _X$, so we are done.

\noindent
{\bf Step 3.} {\it \  If   $d_i\leq 2g_i+\dd_i-2$ for every $i$, then the statement holds.}

By Step 2 the hypthesis is equivalent to $\md = \mdeg \omega_X$.
By Step 1 this is the only case that remains to be treated.
By Remark~\ref{ord} we can order the irreducible components of $X$  in such a way that for every $i\neq \g$ we have
\begin{equation}
\label{int}
C_i\cap (\cup_{j=i+1}^\g C_j)\neq \emptyset.
\end{equation}
Denote $\dd_{i,j}:=C_i\cdot C_j$ for every $i\neq j$.
Our choice of ordering of the $C_i$ yields   $ \sum_{j=1}^{i-1}\dd_{i,j}\leq \dd_i-1$, for all  $i<\g$. Therefore
(as $e_i+1=\dd_i-1$)
\begin{equation}
\label{comp}
\min\{e_i+1, \sum_{j=1}^{i-1}\dd_{i,j}\}= \min\{\dd_i-1,
\sum_{j=1}^{i-1}\dd_{i,j}\}=\sum_{j=1}^{i-1}\dd_{i,j},\  \  \  \forall i\neq \g .
\end{equation}
Now we shall bound $l$ by gluing one component at the time, starting with gluing $C_2$ to $C_1$ and ending
with gluing $C_{\g}$ to $\cup_{i=1}^{\g -1}C_i$. At each step we  apply Lemma~\ref{e}.
So, set $Z_i=\cup_{j=i}^iC_j\subset X$.
The first gluing (of $C_2$ to $C_1$) yields, using (\ref{comp}) and assuming $\g\geq 3$ 
(if $\g=2$ we jump to the last step, gluing $C_{\g}=C_2$ to $C_1$),
$$
h^0(Z_2,L_{Z_2})\leq l_1+l_2-\min\{e_2+1, \dd_{1,2}\}=l_1+l_2-  \dd_{1,2}.
$$
More generally, iterating up to the index $i\leq \g-1$, applying Lemma~\ref{e} and (\ref{comp}) at each
step, we obtain
\begin{equation}
\label{step}
h^0(Z_i,L_{Z_i})= l_1+\ldots+l_i-\dd_{1,2}-\ldots-\sum_{j=1}^{i-1}\dd_{i,j}.
\end{equation}
The last step is the gluing of $C_\g$, for which we need
\begin{equation}
\label{compg}
\min\{e_\g+1, \dd_\g\}=\min\{\dd_\g-1, \dd_\g\}=\dd_\g -1;
\end{equation}
hence
$$
l\leq h^0(Z_{\g-1},L_{Z_{\g-1}})+l_\g-\min\{e_\g+1, \dd_\g\}=h^0(Z_{\g-1},L_{Z_{\g-1}})+l_\g-\dd_\g +1.
$$
Combining everything we obtain
$$
l\leq \sum _{i=1}^\g l_i-\sum_{i=2}^{\g-1}(\sum_{j=1}^{i-1}\dd_{i,j})-\dd_\g+1= 
\sum _{i=1}^\g l_i-\dd+1=d-g+2=g 
$$
($\sum _{i=1}^\g l_i=d-\sum _{i=1}^\g g_i+\g=d-g+\dd +1$).

This finishes the proof of (\ref{eqcl}). Observe that
in our computation we had equality holding at every
step (see (\ref{step})) but the last one, when we glued $C_\g$. At that point, by (\ref{compg}), we are in the situation of
Lemma~\ref{e} (\ref{e!}). We obtain that equality holds for at most one $L$.
Now, if $L=\omega_X$, equality does hold, so this is the only case for which $h^0(X,L)=\frac{d}{2}+1=g$.
\end{proof}
We used the following simple   facts, which can be easily proved by induction.
\begin{remark}
\label{ord}
Let $X$ be a reducible, connected  curve.
\begin{enumerate}[(i)]
\item
\label{ordd} Then $X$ admits an irreducible component $C$ such that $C^c$ is connected
(such a $C$ will be called a {\emph {non-disconnecting component}}).
\item
\label{ordo} The  irreducible components $C_1,\ldots, C_\g$ of $X$ can be ordered  so that
for every $i<\g$ there exists $j>i$ such that $C_i\cap C_j\neq \emptyset$.
\end{enumerate}
\end{remark}

\section{Clifford's Theorem in low degree}
\subsection{Line bundles of degree at most $0$}

\begin{nota}
\label{Ysn} Let $X$ be  fixed. For any  $\md=(d_1,\ldots, d_{\g}) \in \Z^{\g}$, we  denote
\begin{equation}
\label{Zd}
\Zd:= \bigcup_{i:d_i<0}C_i\subset X.
\end{equation}

\begin{remark}
\label{cl-}
{\it Let $X$ be a nodal connected curve, and let $\md$ be such that $|\md|<0$
 and  $\md \leq 0$. Then for every $L\in \picX{\md}$ we have $h^0(X,L)=0$.}

Indeed $h^0(\Zd, L_{\Zd})=0$, of course. Now, for any connected component, $Y$, of $\ov{X\smallsetminus \Zd}$,
we have $\md_Y=(0,\ldots,0)$, hence   $h^0(Y,L_Y)\leq 1$ with equality  if and only if $L_Y=\O_Y$, in which case $L_Y$ has no base
points. So the remark follows from Lemma~\ref{d1l}.
\end{remark}

Fix $L\in \picX{\md}$; for every nonzero $s\in H^0(X,L)$ we let $Y_s$ be the subcurve of $X$ where $s$ does not vanish,
and $\Ws$ its complementary curve:
\begin{equation}
\label{Ys}
Y_s:=\bigcup_{i:s_{|C_i}\neq 0} C_i  \subset X \ \   \text{  and } \  \Ws:=\ov{X\smallsetminus \Ys}.
\end{equation}
Note that   $\Zd\subset \Ws$ and  $\md_{\Ys}\geq 0$.
\begin{remark}
\label{S1}
{\it With the above notation, fix
 $\md$   such that $\md \not\geq 0$, and  let $L\in \picX{\md}$. For every nonzero $s\in H^0(X,L)$
(if it exists)
we have  $d_{\Ys}\geq \dd_{\Ys}$.}

Indeed $\Zd$ is nonempty, hence $\Ws$ is nonempty. Since $s$ vanishes on $\Ws\cap \Ys$ the claim follows.
\end{remark}
\end{nota}
\begin{lemma}
\label{cl0}
Let  $X$ be a semistable curve,   $d\leq 0$ and $\md \in \BXd$. 

Then for every $L\in \picX{\md}$, with $L\neq \O_X$, we have $h^0(X,L)=0$.
\end{lemma}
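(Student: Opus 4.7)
The plan is to argue by contradiction using the balanced condition and Remark~\ref{S1}. The case $\md=\mo$ is immediate from Fact~\ref{cl00}: any $L\in\picX{\mo}$ satisfies $h^0(X,L)\leq 1$, with equality if and only if $L\cong\O_X$, so $h^0(X,L)=0$ as soon as $L\neq \O_X$. Assume from now on that $\md\neq\mo$; since $|\md|=d\leq 0$ this forces some $d_i<0$, in particular $\md\not\geq\mo$ and $\Zd\neq\emptyset$. Because $\md$ is balanced, every exceptional component $E$ of $X$ satisfies $d_E=1$, so the component $C_i$ with $d_i<0$ is non-exceptional; hence the line bundle $L_{C_i}$ has negative degree on the irreducible curve $C_i$ and $h^0(C_i,L_{C_i})=0$.

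Now suppose, for contradiction, that some nonzero $s\in H^0(X,L)$ exists, and let $Y_s$ be as in (\ref{Ys}). Since $s|_{C_i}=0$ for the component $C_i$ above, $Y_s$ is a proper subcurve of $X$. Fix any connected component $Y'$ of $Y_s$; distinct connected components of $Y_s$ cannot share a node, so the $\dd_{Y'}$ nodes of $Y'\cap (Y')^c$ all lie in $W_s$, and $s$ vanishes on each of them. The argument of Remark~\ref{S1}, applied to $Y'$ in place of $Y_s$, then yields $d_{Y'}\geq \dd_{Y'}\geq 1$, where the lower bound $\dd_{Y'}\geq 1$ holds because $X$ is connected and $Y'\subsetneq X$.

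On the other hand, $Y'$ is a connected subcurve, so the upper half of the balanced inequality (\ref{BI}) gives
$$d_{Y'}\leq d\frac{w_{Y'}}{w}+\frac{\dd_{Y'}}{2}.$$
By (\ref{DMss}) we have $w_{Y'}\geq 0$, and since $d\leq 0$ the first summand on the right is non-positive, so $d_{Y'}\leq \dd_{Y'}/2$. Combining with $d_{Y'}\geq \dd_{Y'}$ forces $\dd_{Y'}\leq 0$, contradicting $\dd_{Y'}\geq 1$.

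The whole argument is a squeeze between the Remark~\ref{S1} lower bound and the balanced upper bound, which become incompatible as soon as $d\leq 0$. The only non-trivial bookkeeping is (i) isolating the case $\md=\mo$ to be covered by Fact~\ref{cl00}, and (ii) invoking the balanced condition on exceptional components to guarantee that a genuine $C_i$ with $d_i<0$ exists on which $s$ must vanish, forcing $Y_s\subsetneq X$.
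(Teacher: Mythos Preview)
Your proof is correct and follows the same approach as the paper: handle $\md=\mo$ via Fact~\ref{cl00}, then for $\md\not\geq 0$ squeeze the degree on $Y_s$ (you pass to a connected component $Y'$, the paper works with $Y_s$ directly) between the lower bound $\delta$ coming from Remark~\ref{S1} and the upper bound $\delta/2$ coming from the balanced inequality together with $d\leq 0$. The aside on exceptional components is unnecessary but harmless, since any irreducible $C_i$ with $d_i<0$ already satisfies $h^0(C_i,L_{C_i})=0$.
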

\begin{proof}
If $\md=(0,\ldots,0)$ the statement follows from Fact~\ref{cl00}.
We can thus assume $\md \not\geq 0$.
As $\md$ is balanced, for every subcurve $Z\subset X$ we have 
$$
d_Z\leq \frac{dw_Z}{w}+\dfZ\leq \dfZ .
$$
Hence $d_Z<\dd_Z$. Combining this with Remark~\ref{S1},   we  are done.
\end{proof}
\begin{nota}
By  Riemann-Roch and Serre duality, any statement about sections of line  bundles of degree $2g-2$
has a  dual  statement about sections of line  bundles of degree $0$.
The following is the dual of Theorem~\ref{cc}.
\begin{thm}[Clifford for $d=0$]
\label{c0}
Let $X$ be a  curve of genus $g\geq 2$. Let $\md$ be such that $|\md|= 0$.
Assume that one of the following conditions hold.
\begin{enumerate}[(1)]
\item
\label{rc0}
$d_Z\leq \dd_Z-1$ for every  proper subcurve $Z\subsetneq X$.
\item
\label{bc0}
$X$ is semistable and 
 $\md $ is balanced.
\item
\label{ec0} $\dd_i\leq d_i\leq 2g_i-2+\dd_i$, for every $i=1,\ldots, \g$.
\end{enumerate}
Then 
$
h^0(X,L)\leq 1 
$
for every $L\in \picX{\md}$.

Moreover, let $L\in \picX{\md}$ be such that  $h^0(L)= 1$.
If (\ref{rcc}) or (\ref{bcc}) holds, or if (\ref{ecc}) holds with $\sep=\emptyset$,
then
 $L\cong\O_X$.
\end{thm}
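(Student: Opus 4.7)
The plan is to reduce everything to Theorem~\ref{cc} via Serre duality and Riemann--Roch. Since $X$ is a nodal curve it is Gorenstein, hence $h^1(X,L)=h^0(X,\omega_X\otimes L^{-1})$; combined with Riemann--Roch this gives
$$
h^0(X,L)=h^0(X,M)+|\md|+1-g=h^0(X,M)+1-g,
$$
where $M:=\omega_X\otimes L^{-1}$ has total degree $2g-2$. Hence $h^0(X,L)\leq 1$ is equivalent to $h^0(X,M)\leq g$, and the equality case $h^0(X,L)=1$ corresponds to $h^0(X,M)=g$.

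The main step is to check that each of the three hypotheses on $\md$ transforms, under $L\mapsto M$, into the corresponding hypothesis of Theorem~\ref{cc} on $\mdeg M$. Using the formula $\deg_Z M=w_Z-d_Z=2g_Z-2+\dd_Z-d_Z$ for any subcurve $Z\subseteq X$, hypothesis (\ref{rc0}) yields at once $\deg_Z M\geq 2g_Z-1$ for every proper subcurve, which is hypothesis (\ref{rcc}) of Theorem~\ref{cc}. Similarly hypothesis (\ref{ec0}), $\dd_i\leq d_i\leq 2g_i-2+\dd_i$, translates into $0\leq \deg_{C_i}M\leq 2g_i-2\leq 2g_i$, i.e. hypothesis (\ref{ecc}).

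Hypothesis (\ref{bc0}) needs slightly more care, because if $E\subset X$ is an exceptional component then the balancing convention forces $d_E=1$ and hence $\deg_E M=-1$, so that $M$ itself is not balanced in the sense of~\ref{bal}. I would circumvent this by showing that (\ref{bc0}) implies (\ref{rc0}): the balanced inequality (\ref{BI}) with $|\md|=0$ reads $d_Z\leq \dd_Z/2$, and since $d_Z$ is an integer with $\dd_Z\geq 1$ for any proper connected subcurve of the connected curve $X$, one obtains $d_Z\leq \lfloor\dd_Z/2\rfloor\leq \dd_Z-1$. Thus the semistable balanced case (\ref{bc0}) is subsumed by (\ref{rc0}) and reduces to the previously handled case.

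In all three situations Theorem~\ref{cc} gives $h^0(X,M)\leq g$, hence $h^0(X,L)\leq 1$. For the \emph{moreover} part, equality $h^0(X,L)=1$ corresponds to $h^0(X,M)=g$, and the condition $\sep=\emptyset$ depends only on $X$; thus Theorem~\ref{cc} forces $M\cong\omega_X$, i.e.\ $L\cong\O_X$. The argument is otherwise routine; the only delicate point is the observation above that balancedness in degree $0$ does not dualize to balancedness in degree $2g-2$ on exceptional components, which is bypassed via the intermediate condition~(\ref{rc0}).
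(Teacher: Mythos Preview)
Your proof is correct and follows exactly the approach the paper indicates: the paper's own proof is a single sentence stating that the result follows from Theorem~\ref{cc} via Riemann--Roch and Serre duality ``together with some trivial arithmetic'', and you have carried out precisely that arithmetic. Your observation that balancedness in degree $0$ does not dualize to balancedness in degree $2g-2$ on exceptional components, and your workaround via the implication (\ref{bc0}) $\Rightarrow$ (\ref{rc0}), is a genuine point that the paper leaves implicit; note that this mirrors what the paper does in the proof of Theorem~\ref{cc} itself, where (\ref{bcc}) is handled by showing it implies (\ref{rcc}) using only the inequality part of the balanced condition.
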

\begin{proof} This follows from Theorem~\ref{cc}, applying Riemann-Roch and Serre duality,
together with some trivial arithmetic.
\end{proof}
\end{nota}

\subsection{Clifford's theorem in degree at most $4$}

The main result of this section is Theorem~\ref{cl4}, stating   the Clifford inequality   for line bundles of balanced
multidegree on semistable curves free from separating nodes.
The proof is organized as follows.  In Lemma~\ref{lemmino}, Lemma~\ref{lemmino2} and Proposition~\ref{lemmino3} we treat  the case
$\md \geq 0$, 
without assuming that $\md$ is balanced.
The proof of Theorem~\ref{cl4} is thus reduced to assume that $\md$ has some negative entry.

Quite interestingly, if $d\geq 5$ Clifford's theorem fails  even when $X$ has no separating nodes. See Example~\ref{cl5no}.

\begin{nota}
\label{sl}
Let $n\in \sep$ be a separating node of $X$; then there exist two   subcurves $Z_1$ and $Z_2$ of $X$ such that
$X=Z_1\cup Z_2$ and $n=Z_1\cap Z_2$.
Such curves $Z_1$, $Z_2$ are called the  tails  of $X$ generated by $n$. So, a subcurve $Z\subset X$ is
called a {\it tail} if $Z\cdot Z^c=1$. As $X$ is connected,   its tails are  connected.

Let $C\subset X$ be a subcurve.  $C$ is called a {\it separating line} if $C\cong \pr{1}$ and if $C$ meets
its complementary curve $C^c$ only in separating nodes of $X$.
Equivalently:  a separating line $C\subset X$ is a smooth rational component such that $C^c$ has
a number of connected components equal to $C\cdot C^c$.

If $X\cong\pr{1}$, then  $X$ is a separating line of itself.

If $Y$ is a disconnected curve and $C\subset Y$, we say $C$  is a separating line of $Y$ if it is so for the
connected component of $Y$ containing $C$.

Observe that if $C$ is a separating line, we have
\begin{equation}
\label{ZC}
Z\cdot C \leq 1\  \    \text{for every connected }\  Z\subset C^c.
\end{equation}
\end{nota}
\begin{remark}
\label{snr}
Assume $\sep =\emptyset$;   equivalently, assume that $X$ has no tails.
Let $Z$ be a  subcurve of $X$.
  \begin{enumerate}[(A)]
\item
\label{snrm}
 If $m$ is the number of connected components of $Z$, then $m\leq \frac{\dd_Z}{2}$
\item
\label{snrC}
Let $X=D\cup Y$ with $D$ connected. If $C\subset Y$ is a separating line of $Y$, then
 $\ov{X\smallsetminus C}$ is connected.
\end{enumerate}
The only statement that is not  obvious is (\ref{snrC}). 
Let $Y_1,\ldots , Y_m$ be the connected components of $Y$ and suppose $C\subset Y_1$.
We can assume  $C\neq  Y_1$. Thus
every connected component of $\ov{Y_1\smallsetminus C}$ is a tail of $Y_1$; as $X$ has no tails
   $D$ intersects
every connected component of $\ov{Y_1\smallsetminus C}$. On the other hand, $D$ obviously intersects $Y_i$ for all
$i\geq 2$, therefore 
$\ov{X\smallsetminus C}$  is connected.
\end{remark}

\begin{lemma}
\label{lemmino}
Let  $L\in \picX{\md}$. 
Assume $\md=(1,0,\ldots,0)$.
Then either  
$ h^0(X,L)\leq 1,$ 
or $C_1$ is a separating line,    $h^0(X,L)=2$ and  $L_{C_1^c}=\O_{C_1^c}$.
\end{lemma}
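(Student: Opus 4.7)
The plan is to combine Remark~\ref{ur} with an analysis of when its upper bound is attained. Applying Remark~\ref{ur} with $V=C_1$ and $Z=C_1^c$ (the complement has multidegree $\mo$) yields $h^0(X,L) \leq h^0(C_1, L_{C_1})$, with $\deg L_{C_1} = 1$. If $g_1 \geq 1$, Clifford's theorem on the irreducible curve $C_1$ gives $h^0(C_1, L_{C_1}) \leq 1$, so $h^0(X,L) \leq 1$ and we are done. Otherwise $g_1 = 0$, $C_1 \cong \pr{1}$, $L_{C_1} \cong \O_{\pr{1}}(1)$ and the bound is $h^0(X,L) \leq 2$. From here on I assume $h^0(X,L) = 2$ and aim to deduce that $C_1$ is a separating line and $L_{C_1^c} \cong \O_{C_1^c}$.

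Decompose $C_1^c = \coprod_{i=1}^c Z_i$ into connected components; since $X$ is connected, $\delta_{Z_i} := C_1 \cdot Z_i \geq 1$ for each $i$. Any section of $L$ vanishing on $C_1$ restricts to each $Z_i$ as a section of the multidegree-$\mo$ bundle $L_{Z_i}$ that vanishes on the nonempty set $C_1 \cap Z_i$; by Fact~\ref{cl00}, this restriction must be identically zero. Hence the restriction map $H^0(X,L) \to H^0(C_1, L_{C_1})$ is injective, and the assumption makes it an isomorphism. If $L_{Z_i} \not\cong \O_{Z_i}$ for some $i$, then $h^0(Z_i, L_{Z_i}) = 0$, so every global section of $L$ vanishes on $Z_i$, and its restriction to $C_1$ vanishes on the nonempty set $C_1 \cap Z_i$; thus the image lies in a proper subspace of $H^0(C_1, L_{C_1})$, contradicting surjectivity. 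Therefore $L_{Z_i} \cong \O_{Z_i}$ for every $i$, which already gives $L_{C_1^c} \cong \O_{C_1^c}$.

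It remains to show $\delta_{Z_i} = 1$ for every $i$. Suppose $\delta_{Z_i} \geq 2$ for some $i$, and pick branches $(p_1, q_1)$ and $(p_2, q_2)$ of two distinct nodes of $C_1 \cap Z_i$, with $p_j \in C_1$ and $q_j \in Z_i$. Starting from $C_1 \sqcup Z_i$ with $h^0 = 2 + 1 = 3$, I glue the two nodes in sequence. At the first gluing: neither branch of $n_1$ is a base point of $L$ (both $\O_{\pr{1}}(1)$ and $\O_{Z_i}$ are globally generated), so Remark~\ref{npr}(B) gives $p_1 \not\sim q_1$; by Lemma~\ref{d1l}, $h^0$ drops to $2$. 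At the second gluing: if $p_2 \sim_{M_1} q_2$ held (where $M_1$ denotes the bundle on the once-glued curve), Lemma~\ref{bpl} would force $p_2$ to be a base point of $L_{C_1}(-p_1) \cong \O_{\pr{1}}$, which is absurd. Hence $p_2 \not\sim q_2$, and Lemma~\ref{d1l} drops $h^0$ to $1$; subsequent gluings cannot raise it, so $h^0(C_1 \cup Z_i, L|_{C_1 \cup Z_i}) \leq 1$. Applying Remark~\ref{ur} to the subcurve $C_1 \cup Z_i \subset X$ (whose complement has multidegree $\mo$) then yields $h^0(X, L) \leq 1$, contradicting the assumption. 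Thus $\delta_{Z_i} = 1$ for every $i$, so every node of $C_1$ is a separating node of $X$, i.e., $C_1$ is a separating line.

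The main anticipated obstacle is the final step, ruling out $\delta_{Z_i} \geq 2$: it relies on the observation that $\O_{\pr{1}}(1)$ twisted down by one point becomes the base-point-free sheaf $\O_{\pr{1}}$, which via Lemma~\ref{bpl} blocks the neutral-pair condition needed to preserve $h^0$ across a second gluing. This is exactly where the hypothesis $\deg L_{C_1} = 1$ on a rational component is used in an essential way.
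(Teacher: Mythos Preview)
Your proof is correct and follows essentially the same approach as the paper: reduce via Remark~\ref{ur} to $C_1\cong\pr{1}$, then show that a connected component $Z_i$ of $C_1^c$ with $C_1\cdot Z_i\geq 2$ forces $h^0\leq 1$. The only cosmetic differences are that the paper dispatches the $\delta_{Z_i}\geq 2$ case in one line via Lemma~\ref{e} (which already packages your two-step gluing argument), and it establishes the separating-line property before the triviality of $L_{C_1^c}$ rather than after.
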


\begin{proof}
Denote $Y=C_1^c$ and let $Y=\coprod_{i=1}^cY_i $ be the decomposition  into connected components.
Of course $C_1$ must intersect every $Y_i$.

If   $g_1\geq 1$ we have $h^0(C_1, L_{C_1})\leq 1$ hence
 the lemma follows from  Remark~\ref{ur} (with $V=C_1$). So it suffices to assume $C_1\cong \pr{1}$.
If  $C_1$ is not a separating line   there exists at least one connected component of $Y$, $Y_1$ say, such that $ C_1\cdot Y_1\geq 2$.
Set $X_1=C_1\cup Y_1$, then by Remark~\ref{ur} and  Lemma~\ref{e} we conclude as follows
$$
h^0(X,L)\leq h^0(X_1,L_{X_1})\leq h^0(C_1, L_1)+h^0(Y_1,L_{Y_1})-2\leq 2+1-2=1.
$$ 
If $C_1$ is a separating line and for some component of $Y$, $Y_1$ say, we have $L_{Y_1}\neq\O_{Y_1}$, then every section
of $L$ has to vanish on $Y_1$, hence not every section of $\O_{C_1}(1)$ extends to a section of $L$.

Conversely, if $L_{Y_i}= \O_{Y_i}$ for all $i$  it is obvious that $h^0(X,L)=2$.
\end{proof}

\begin{lemma}
\label{lemmino2}
Let  $L\in \picX{\md}$.
Assume that $|\md|=2$ and  $\md \geq 0$.
Then  either
$ h^0(X,L)\leq 2,$ 
or $h^0(X,L)=3$ and one of the following cases occurs
\begin{enumerate}[(i)]
\item
\label{}
$\md =(2,0,\dots, 0)$ with    $C_1$   a separating line.
\item
\label{}
$\md =(1,1,0,\dots, 0)$, with    $C_1$ and $C_2$   separating lines.
\end{enumerate}
\end{lemma}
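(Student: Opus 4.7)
The plan is to split into two cases up to reordering of the components: either $\md=(2,0,\ldots,0)$ (Case A) or $\md=(1,1,0,\ldots,0)$ (Case B). In Case A, I first apply Remark~\ref{ur} to $V=C_1$ and $Z=C_1^c$ (which has multidegree zero) to get $h^0(X,L)\le h^0(C_1,L_{C_1})$. If $g_1\ge 1$ then Clifford on the irreducible nodal curve $C_1$ (Subsection~\ref{index}) yields $h^0(C_1,L_{C_1})\le 2$. If $g_1=0$ then $L_{C_1}\cong\O_{\pr{1}}(2)$ has $h^0=3$, so I must upgrade the bound to $2$ unless $C_1$ is a separating line: if some connected component $Y$ of $C_1^c$ satisfies $\delta:=C_1\cdot Y\ge 2$, then Lemma~\ref{e}(\ref{emin}) applied to $X':=C_1\cup Y$ (with $C=C_1$, so $e_C=2$, and $h^0(Y,L_Y)\le 1$ by Fact~\ref{cl00}) gives $h^0(X',L_{X'})\le 3+1-2=2$, and a further application of Remark~\ref{ur} absorbing the remaining multidegree-zero components propagates this to all of $X$, a contradiction. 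Hence $C_1$ is a separating line, yielding case~(i).

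For Case B, set $W$ to be the connected component of $X\smallsetminus C_2$ containing $C_1$ and $W':=W^c$; since $X$ is connected, $W'$ is itself connected (each other connected component of $X\smallsetminus C_2$ must meet $C_2$). On $W$ the multidegree of $L_W$ places a single $1$ on $C_1$ and zeros elsewhere, so Lemma~\ref{lemmino} gives either $h^0(W,L_W)\le 1$, or $h^0(W,L_W)=2$ with $C_1\cong\pr{1}$ a separating line of $W$ and $L_{W\smallsetminus C_1}\cong\O$ (in particular $L_W$ is base-point-free). The symmetric dichotomy holds for $W'$ and $C_2$. Normalizing the $\dd:=|W\cap W'|$ shared nodes produces $Y=W\sqcup W'$ with $h^0(Y,M)=h^0(W,L_W)+h^0(W',L_{W'})\le 4$, and inequality~(\ref{eleq}) bounds $h^0(X,L)\le h^0(Y,M)$. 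Whenever Lemma~\ref{lemmino} yields $h^0=2$ on at least one side, the corresponding line bundle is base-point-free, and Remark~\ref{npr}(\ref{npb}) combined with Lemma~\ref{d1l} applied to the last normalization step (where the two branches lie in different connected components of $Y$) forces a strict drop: $h^0(X,L)\le h^0(Y,M)-1$. A short inspection over the possible values of $(h^0(W,L_W),h^0(W',L_{W'}))$ yields $h^0(X,L)\le 2$ in every case except $h^0(W,L_W)=h^0(W',L_{W'})=2$; that last possibility forces $g_1=g_2=0$ and gives $h^0(X,L)\le 3$.

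The main obstacle is to show that equality $h^0(X,L)=3$ in this last sub-case propagates the separating-line property from $W$ and $W'$ to the whole $X$. By Lemma~\ref{d2} the equality forces $p_i\sim_{L_W}p_j$ and $q_i\sim_{L_{W'}}q_j$ for all pairs of $W$-branches $p_i$ and $W'$-branches $q_i$. Using the explicit description of $L_W$ (namely $\O(1)$ on $C_1$ and trivial on every connected component of $W\smallsetminus C_1$) together with Lemma~\ref{c}, a direct inspection of the neutral-pair equivalence rules out two branches lying on $C_1$ and rules out branches lying in distinct connected components of $W\smallsetminus C_1$, so either $\dd=1$ or all $\dd$ branches concentrate in a single connected component of $W\smallsetminus C_1$ (and symmetrically on the $W'$-side). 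An explicit count comparing $C_1\cdot C_1^c$ with the number of connected components of $C_1^c$, distinguishing whether the $W\cap W'$ nodes sit on $C_1$ or on a single $Y_\alpha\subset W\smallsetminus C_1$, matches these numbers and shows that $C_1$ is a separating line of $X$; the symmetric argument for $C_2$ completes case~(ii).
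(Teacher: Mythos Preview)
Your argument is correct, but it takes a substantially different route from the paper's proof. The paper argues by reduction: assuming $h^0(X,L)\ge 3$, it picks a nonsingular point $p$ on a positive-degree component, so that $L(-p)$ still has $\mdeg\ge 0$, $|\mdeg|=1$, and $h^0(L(-p))\ge 2$. A single invocation of Lemma~\ref{lemmino} then forces the existence of a separating line carrying the remaining degree~$1$; placing $p$ on either positive-degree component in the $(1,1)$ case shows both $C_1$ and $C_2$ are separating lines, and $h^0(L(-p))=2$ gives $h^0(L)=3$. The whole proof is four lines.

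By contrast you proceed structurally: you cut $X$ into two connected pieces $W,W'$ each carrying multidegree $(1,0,\ldots,0)$, apply Lemma~\ref{lemmino} to each, and then control the gluing via Lemmas~\ref{d1l} and~\ref{d2}. The hardest part of your approach is the last paragraph, where you must promote ``$C_1$ is a separating line of $W$'' to ``$C_1$ is a separating line of $X$'' by analyzing the neutral-pair relation $\sim_{L_W}$ and counting connected components of $C_1^c$. That analysis is correct (one checks directly that for $L_W$ equal to $\O(1)$ on $C_1$ and trivial elsewhere, two smooth points are $\sim_{L_W}$-equivalent exactly when they lie in the same connected component of $\overline{W\smallsetminus C_1}$), and the component count goes through. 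One small point: your citation of Lemma~\ref{c} here is not quite right, since that lemma requires $d\le 2g$ and $L_{C_1}=\O_{\pr{1}}(1)$ has $d=1>0=2g_1$; the fact you need (that $\O_{\pr{1}}(1)$ has no neutral pair) is Remark~\ref{npr}(\ref{np1}), or just the direct inspection you allude to.

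What each approach buys: the paper's subtract-a-point trick is far shorter and avoids any delicate neutral-pair bookkeeping; it also makes the passage from degree~$1$ to degree~$2$ transparent (and indeed the same trick handles the passage from~$3$ to~$4$ in Proposition~\ref{lemmino3}). Your approach, while longer, yields along the way the explicit description of $L$ restricted to $W$ and $W'$ and pinpoints exactly where the $W\cap W'$ nodes must sit, information closer in spirit to what is later extracted in Theorem~\ref{c2=}.
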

\begin{proof}
Assume $h^0(L)\geq 3$. 
For every nonsingular point $p$ of $ X$ we have
\begin{equation}
\label{Lp}
h^0(L(-p))\geq  h^0(L)-1\geq 2.
\end{equation}
Of course, $\deg L(-p)=1$ and,
if   $p$ lies in a component $C_1$ such that $d_1>0$  we have
  $\mdeg L(-p)\geq 0$. 
By Lemma~\ref{lemmino} we get $h^0(L(-p))\leq 1$, 
unless $X$ has a separating line $E$  with   $\deg_EL(-p)=1$. If $X$ does not have such a separating line we got a
contradiction to  (\ref{Lp}). 
Now,  $X$   admits such a separating line $E$ if and only if
either  $d_1=2$ and $E=C_1$,
or $d_1=1$, hence $d_2=1$, and $C_2$ is a separating line. By placing $p\in C_2$ we get that both $C_1$ and $C_2$
are separating lines.
By Lemma~\ref{lemmino}   $h^0(L(-p))=2 $, so $h^0(L)=3$ by (\ref{Lp}) and we are done.
\end{proof}
\begin{prop}
\label{lemmino3}
Let $X$ be a   stable curve free from separating nodes.
Let $\md$ be such that $\md \geq 0$ and $|\md|=3,4$.
Then $h^0(X,L)\leq |\md|/2 +1
$
   for every $L\in \picX{\md}$.
\end{prop}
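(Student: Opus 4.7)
The plan is to treat the case $|\md|=4$ by reduction to $|\md|=3$: if $h^0(X,L)\geq 4$, then since $|\md|>0$ there is a component $C_i$ with $d_i>0$, and for any nonsingular $p\in C_i$ the bundle $L(-p)$ satisfies $\md\geq 0$, $|\md|=3$, and $h^0(L(-p))\geq 3$, contradicting the $|\md|=3$ bound.

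For the crux case $|\md|=3$ one wants $h^0(L)\leq 2$. If $0\leq d_i\leq 2g_i$ for every $i$, Proposition~\ref{ecl} already gives $h^0(L)\leq 5/2$, hence $\leq 2$. Otherwise some component $C_i$ has $d_i>2g_i$; since $\md\geq 0$ and $|\md|=3$ the pair $(g_i,d_i)$ belongs to the finite list $\{(0,1),(0,2),(0,3),(1,3)\}$. The geometry of $X$ then yields the key numerical constraints: stability forbids exceptional components, so $\delta_i:=C_i\cdot C_i^c\geq 3$ whenever $g_i=0$, while the absence of separating nodes forces $\delta_i\geq 2$ when $g_i=1$; moreover for every connected component $Z_j$ of $C_i^c$ the intersection number $k_j:=C_i\cdot Z_j$ satisfies $k_j\geq 2$, since otherwise the unique node $C_i\cap Z_j$ would be separating.

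For the pure multidegree $\md=(d_i,0,\ldots,0)$ I would combine Remark~\ref{ur} (giving $h^0(X,L)\leq h^0(C_i,L_i)$) with an explicit count of the linear conditions imposed on $H^0(C_i,L_i)$ by the gluing across the $\delta_i$ nodes. Each connected component $Z_j$ of $C_i^c$ with $L|_{Z_j}=\O_{Z_j}$ imposes $k_j-1$ equalities on $s_1\in H^0(C_i,L_i)$ (values agreeing at the $k_j$ branch points), while each $Z_j$ with $L|_{Z_j}\neq\O_{Z_j}$ imposes $k_j$ vanishing conditions. In the worst case $(g_i,d_i)=(0,3)$ we have $h^0(C_i,L_i)=4$ and the total number of conditions is at least $\delta_i-c$, where $c$ is the number of components of $C_i^c$; combining $\delta_i\geq 2c$ with $\delta_i\geq 3$ yields $h^0(X,L)\leq 4-(\delta_i-c)\leq 2$. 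The three remaining pairs $(g_i,d_i)$ are handled by the same counting. For mixed multidegrees (such as $(2,1,0,\ldots,0)$ or $(1,1,1,0,\ldots,0)$), I would subtract a nonsingular point from a smaller positive-degree component and apply Lemma~\ref{lemmino2} to $L(-p)$, whose exceptional separating-line case is excluded by hypothesis; to rule out the residual $h^0(L)=3$ one iterates, applying Lemma~\ref{lemmino} to $L(-p-q)$ and Fact~\ref{cl00} to $L(-p-q-r)$, deducing that $L\cong\O_X(p+q+r)$ for many choices of $(p,q,r)$, an equality which one can show is incompatible with the numerical structure of the chosen $C_i$.

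The principal obstacle I foresee is the sharpness of the node-counting in the case $(g_i,d_i)=(0,3)$: the naive bound from Remark~\ref{ur} is twice the target, and closing the gap requires the full strength of both stability and the no-separating-nodes assumption to extract enough independent linear conditions at the nodes.
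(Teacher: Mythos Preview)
Your reduction of $|\md|=4$ to $|\md|=3$ by subtracting a point matches the paper exactly, as does invoking Proposition~\ref{ecl} when $d_i\le 2g_i$ for all $i$.

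For the pure multidegree $\md=(d_1,0,\ldots,0)$ your direct count of linear conditions differs from the paper's repeated use of Lemma~\ref{e}, but it can be made to work. What is missing is a justification that the $\delta_1-c$ gluing conditions are \emph{independent} on $H^0(C_1,L_1)$; this is not automatic, and in fact when $c$ is large your formula $h^0\le 4-(\delta_1-c)$ may over-count. One does get the needed bound $h^0\le 2$ (for $(g_1,d_1)=(0,3)$ any two of the conditions coming from distinct $Z_j$'s are independent, since the corresponding secant lines to the twisted cubic in $\mathbb P(H^0(\O(3))^*)$ are distinct), but this requires an argument you have not supplied. The paper sidesteps the issue by grouping enough $Y_j$'s so that $C_1\cdot(\cup Y_j)\ge d_1+1$ and applying Lemma~\ref{e} once.

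For the mixed multidegrees your approach has a genuine gap. Subtracting a point and applying Lemma~\ref{lemmino2} to $L(-p)$ on $X$ gives only $h^0(L)\le 3$. Your iteration to rule out $h^0(L)=3$ does yield $h^0(L(-p-q))=1$ for all smooth $p,q$ in suitable components, but the conclusion ``$L\cong\O_X(p+q+r)$ for many $(p,q,r)$'' does not directly follow: the unique section vanishing at $p,q$ may vanish along an entire component, or its residual zero may be a node, so one does not get a clean family of Cartier divisors $p+q+r$. Even when one does, extracting a contradiction from $\O_X(p+q+r)\cong\O_X(p'+q'+r')$ needs structural input you have not provided. The paper proceeds differently: it restricts $L$ to the complementary curve $C_1^c$ and applies Lemmas~\ref{lemmino}--\ref{lemmino2} there, then glues via Lemma~\ref{e}. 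The real difficulty, which your proposal misses, is that $C_1^c$ can contain separating lines even though $X$ has none; in Case $\md=(1,1,1,0,\ldots,0)$ this forces a careful choice of which $C_i$ to peel off, governed by the combinatorial Lemma~\ref{1sp} on $\B$-pairs. That lemma (or an equivalent device) is the missing ingredient.
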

\begin{remark}
The hypotheses $X$ stable and $\sep=\emptyset$ are necessary, as shown by Examples \ref{cl3no} and \ref{cl3n}.
\end{remark}
\begin{proof}
We first treat the case $|\md|=3$.
Consider the irreducible component $C_1$ of $X$;
we shall denote $C_1^c=Y_1\coprod\ldots \coprod Y_m$   the connected component decomposition.
Observe that for 
every   $Y_i$  we have
$Y_i\cdot C_1\geq 2$. 
We set $$
X_1:=C_1\cup Y_1\subset X.
$$ 
We shall repeatedly apply Lemma~\ref{e} and Remark~\ref{ur}.
 
\noindent
{\bf Case 1:} $\md =(3,0,\ldots,0)$.
We have $h^0(X,L)\leq h^0(X_1,L_{X_1})$ by
Remark~\ref{ur}.
Hence it suffices to assume that $C_1$ has  genus  $g_1\leq 1$.

If $g_1=1$,  
by the initial observation and Lemma~\ref{e} we have
$h^0(X_1,L_{X_1})\leq 3+1-2=2$ and we are done.

If $C_1\cong \pr{1}$ we have $h^0(C_1,L_1)=4$ and  $C_1\cdot C_1^c\geq 3$.
If $C_1^c$ has a connected component,
$Y_1$,  such that $C_1\cdot Y_1\geq 3$,
then 
$h^0(Y_1, L_{Y_1})\leq 1$.   
By Lemma~\ref{e} we get $h^0(X_1,L_{X_1})\leq 4+1-3=2$, as wanted. 

Let now 
 $C_1\cdot Y_i= 2$ for all $i=1,\ldots ,m$.
Set 
$X_2=Y_1\cup Y_2\cup C_1\subset X$.
Then  $C_1\cdot (Y_1\cup Y_2)\geq 4=d_1+1$, hence
by  Lemma~\ref{e},
$$
h^0(X_2,L_{X_2})\leq h^0(C_1,L_1)+h^0(Y_1,L_{Y_1})+h^0(Y_2,L_{Y_2})-4\leq 4+2-4=2.
$$ 
By 
Remark~\ref{ur} we are done.

\noindent
{\bf Case 2:} $\md =(1,2,0,\ldots,0)$.

Denote 
$l_i=h^0(C_i, L_i)$.
Assume $C_1^c$ connected;  by Lemma~\ref{lemmino2},  $h^0(C_1^c, L_{C_1^c})\leq 3$ and equality holds  if and only if $C_2$ is a separating line
of 
$C_1^c$. 
If this is not the case,  by Lemma~\ref{e} and   $\dd_1\geq 2$, we get 
$h^0(X,L)\leq l_1+2-2\leq 4-2=2$, as wanted.

If $C_2$ is a separating line of 
$C_1^c$,  then $l_2=3$, and $C_2^c$ is connected, by Remark~\ref{snr} (\ref{snrC}); hence $h^0(C_2^c,L_{C_2^c})\leq 2$.
Since $\dd_2\geq 3$ (as $d_2=2$) we obtain
$$
h^0(X,L)\leq l_2+h^0(C_2^c,L_{C_2^c})-3\leq 5-3=2
$$  and we are done. This part  works
regardless of $C_1^c$ being connected.

Now   let $C_1^c$ have  $m\geq 2$ connected components. 
We can assume that $C_2$ is not a separating line of $C_1^c$.
Let $C_2\subset Y_1$; we have $h^0(Y_1,L_{Y_1})\leq 2$. By Lemma~\ref{e} we get
$h^0(X_1,L_{X_1})\leq h^0(C_1,L_1)+h^0(Y_1,L_{Y_1})-2\leq 2$. By Remark~\ref{ur} we are done.

\noindent
{\bf Case 3:} $\md =(1,1,1,0,\ldots,0)$.
By Proposition~\ref{ecl} we may assume that $C_1\cong \pr{1}$.
Moreover, by Lemma~\ref{1sp}, up to permuting the first three components,
we can assume that $C_2$ and $C_3$ are not   separating lines of $C_1^c$.
If $C_1^c$ is connected, by Lemma~\ref{lemmino2} we have $h^0(C_1^c, L_{C_1^c})\leq 2$
(as $C_2$, $C_3$ are not  separating lines of $C_1^c$).
By  Lemma~\ref{e} we have 
$h^0(X,L)\leq h^0(C_1, L_1)+h^0(C_1^c, L_{C_1^c})-2\leq 2+2-2\leq 2$ and we are done. 

Now assume $C_1^c$ has  $m\geq 2$ connected components.  
If $C_2\cup C_3$ lies in one connected component, $Y_1$, then $h^0(Y_1, L_{Y_1})\leq 2$
(just as above). Therefore 
$ 
h^0(X,L)\leq h^0(X_1,L_{X_1})\leq   2+2-2=2
$ ($X_1=C_1\cup Y_1$).
If instead $C_2$ lies in $Y_1$ and $C_3$ lies in $Y_2$, then for $i=1,2$ we have $h^0(Y_i, L_{Y_i})\leq 1$
by  Lemma~\ref{lemmino} (as $C_2$, $C_3$  are  not separating lines of, respectively,  $Y_1$, $Y_2$). We conclude 
$h^0(X_1,L_{X_1})\leq 2+1-2=1$. 
Now, let 
$ 
X_2=X_1\cup Y_2,$ then 
$$h^0(X,L)\leq h^0(X_2,L_{X_2})\leq h^0(X_1,L_{X_1})+h^0(Y_2, L_{Y_2})\leq 2.
$$
The proof for $d=3$ is complete.

\

Now let $|\md|=4$.  By contradiction, suppose that $h^0(X,L)\geq 4$. As $\md \geq 0$, there exists a component, $C_1$ say, such
that $d_1\geq 1$. Let $p\in C_1$ be a nonsingular point of $X$,
then $h^0(L(-p))\geq h^0(L)-1\geq 3$.
Now, 
 $\deg L(-p)=3$ and $\mdeg L(-p)\geq 0$. By the previous part, $h^0(L(-p))\leq 2$; impossible.
\end{proof}

In the proof we used the following combinatorial Lemma.
\begin{lemma}
\label{1sp} Let $X$ be stable, $\sep=\emptyset$,and $C_1, C_2$ two irreducible components of $X$.
Assume  $C_2$ is a separating line of $C_1^c$, and  $C_1$ is a separating line of $C_2^c$
(i.e. $(C_1,C_2)$ is a $\B$-pair, see definition~\ref{bp}).
Then for every other component $D$ of $X$, $C_1$ and $C_2$ are not separating lines of $D^c$.
\end{lemma}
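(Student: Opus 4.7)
The plan is to translate the $\B$-pair hypothesis into a precise description of the dual graph of $X$ around $C_1\cup C_2$ and then exhibit a node of $C_1$ (respectively $C_2$) in $D^c$ that is not a separating node of $D^c$. First, I would set $W:=\overline{X\smallsetminus(C_1\cup C_2)}$ and decompose $W=W_1\cup\cdots\cup W_n$ into connected components; write $k:=C_1\cdot C_2$. Combining the separating-line definition in Notation~\ref{sl} with property~(\ref{ZC}) of that same notation, the assumption that $C_2$ is a separating line of $C_1^c$ forces $C_2\cdot W_i=1$ for every $i$; symmetrically $C_1\cdot W_i=1$ for every $i$. Hence $\delta_{C_1}=\delta_{C_2}=k+n$, and since $C_1\cong\pr{1}$ is a stable component of $X$ we obtain $k+n\geq 3$.

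Now fix a component $D\neq C_1,C_2$; then $D\subseteq W_i$ for some $i$, and after renumbering we may assume $D\subseteq W_1$. To show $C_1$ is not a separating line of $D^c$ it suffices to exhibit a node of $C_1$ lying in $D^c$ that is not a separating node of $D^c$. The argument splits on $k$. If $k\geq 2$, any of the $k$ parallel nodes in $C_1\cap C_2\subset D^c$ is non-separating in $D^c$ because the other $k-1\geq 1$ nodes continue to connect $C_1$ to $C_2$. If $k=1$, then $n\geq 2$; the unique node $n_0\in C_1\cap C_2$ is non-separating in $D^c$, since $W_2$ is a component of $W$ distinct from $W_1$, hence disjoint from $D$, hence contained in $D^c$, and the two nodes in $C_1\cap W_2$ and $C_2\cap W_2$ produce an alternative arc $C_1\to W_2\to C_2$ inside $D^c\smallsetminus\{n_0\}$. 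Finally, if $k=0$, then $n\geq 3$; choose the node $p\in C_1\cap W_2$ and note that $W_2,W_3\subset D^c$, so the nodes $C_1\cap W_3$, $C_2\cap W_3$, $C_2\cap W_2$ yield a path $C_1\to W_3\to C_2\to W_2$ inside $D^c\smallsetminus\{p\}$, showing that $p$ is not a separating node of $D^c$.

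The conclusion for $C_2$ follows by swapping the roles of $C_1$ and $C_2$. I do not anticipate a real obstacle; the only items demanding care are verifying in each case that the chosen ``detour'' component $W_j$ really lies in $D^c$ (which reduces to $j\neq 1$) and checking that the stability bound $k+n\geq 3$ genuinely yields $n\geq 2$ when $k=1$ and $n\geq 3$ when $k=0$, so that the $W_j$ needed in the detour exists.
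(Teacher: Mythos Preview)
Your proof is correct and shares the same skeleton as the paper's: you both decompose $\overline{X\smallsetminus(C_1\cup C_2)}$ into connected pieces, establish that each piece meets $C_1$ and $C_2$ exactly once, and invoke stability to get $\delta_{C_1}\geq 3$. The only real difference is the endgame. The paper avoids your case split on $k=C_1\cdot C_2$ entirely: it sets $Z:=C_2\cup W_2\cup\cdots\cup W_n$ (in your notation), observes that $Z$ is connected, lies in $D^c$, and satisfies $Z\cdot C_1=\delta_{C_1}-1\geq 2$, and then applies property~(\ref{ZC}) directly to conclude that $C_1$ cannot be a separating line of $D^c$. Your explicit cycle-finding in the dual graph is exactly what underlies (\ref{ZC}), so the arguments are equivalent; the paper's version is just more compact. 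One small point you use without stating: the inequality $C_2\cdot W_i\geq 1$ (and symmetrically for $C_1$) relies on $C_1^c$ and $C_2^c$ being connected, which follows from Remark~\ref{snr}(\ref{snrC}).
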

\begin{proof}
Note that by Remark~\ref{snr} (\ref{snrC}), $C_1^c$ and $C_2^c$ are connected.
Call  $T_1,\ldots, T_t$ the tails of $C_1^c$   generated by $C_2$. Thus $C_1^c=C_2\cup T_1\cup\ldots \cup T_t$,
with $T_i\cap T_j=\emptyset$ and $T_i\cdot C_2=1$.  As $C_2^c$ is connected, $C_1$ must intersect every $T_i$.
As $C_1$ is a separating line of $C_2^c$, we have
\begin{equation}
\label{C1T}
C_1\cdot T_i=1,\  \  \forall i .
\end{equation}
Let $D$ be another component of $X$,  assume $D\subset T_1$. Set $Z=C_2\cup T_2\cup\ldots \cup T_t$,
so that $C_1^c=Z\cup T_1$, hence
$ 
\dd_{C_1}=Z\cdot C_1+T_1\cdot C_1 = Z\cdot C_1 +1 \geq 3,
$ 
by (\ref{C1T}) and the stability of $X$.
We conclude $Z\cdot C_1   \geq 2$. This implies that $C_1$ cannot be a separating line of $D^c$, as $Z$ is
connected and 
$Z\subset D^c$ (cf. \ref{sl} (\ref{ZC})). The same argument with $C_1$ and $C_2$ switching roles yields that 
$C_2$ is not a separating line of $D^c$.
\end{proof}

\begin{thm}
\label{cl4} Let $X$ be a stable curve free from separating nodes. Let $\md$ be balanced with $0<|\md |\leq 4$; let
 $L\in \picX{\md}$. Then
\begin{enumerate}[(i)]
\label{}
\item 
$ h^0(X,L)\leq  |\md| / 2 +1.$ 
\item
\label{=+}
If $|\md|=1,2$ and  $h^0(X,L)= |\md|$, then $\md\geq 0$.
\end{enumerate}
\end{thm}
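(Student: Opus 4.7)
Split into the cases $\md\geq 0$ and $\md\not\geq 0$. When $\md\geq 0$, part (i) is immediate from the preceding results: Lemma \ref{lemmino} treats $|\md|=1$, Lemma \ref{lemmino2} treats $|\md|=2$, and Proposition \ref{lemmino3} treats $|\md|\in\{3,4\}$. The hypothesis $\sep=\emptyset$ means $X$ contains no separating lines, ruling out the exceptional configurations of Lemmas \ref{lemmino} and \ref{lemmino2} (where $h^0$ could reach $|\md|+1$), so the bound $h^0(X,L)\leq |\md|/2+1$ holds unconditionally. Part (ii) is then vacuous.

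For the remainder assume $\md\not\geq 0$, and set $\Zd:=\bigcup_{d_i<0}C_i$, $V:=\ov{X\smallsetminus \Zd}$, $\delta:=\delta_{\Zd}=\delta_V$. Since $L_{\Zd}$ has strictly negative degree on every component, $h^0(\Zd,L_{\Zd})=0$ and every section of $L$ vanishes on $\Zd$; gluing identifies
$$
H^0(X,L)\;\cong\;H^0(V, L'),\qquad L':=L_V(-V\cap \Zd).
$$
The lower half of the balanced inequality (\ref{BI}) applied to $\Zd$ gives $d_{\Zd}\geq dw_{\Zd}/w-\delta/2$; combined with $w_{\Zd}>0$ (since $X$ stable has no exceptional components) and $\delta\geq 2$ (each connected component of $\Zd$ is a proper subcurve, hence has boundary of size $\geq 2$ by $\sep=\emptyset$), this yields $|d_{\Zd}|\leq \delta/2-dw_{\Zd}/w<\delta/2$, whence
$$
\deg L'\;=\;d+|d_{\Zd}|-\delta\;<\;d-1,
$$
so $\deg L'\leq d-2$ as an integer.

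It remains to bound $h^0(V,L')$ by $\lfloor(d-2)/2\rfloor+1$. My plan is to iterate the same vanishing argument: let $W\subset V$ be the union of components of $V$ on which $L'$ has strictly negative degree, replace $V$ by $\ov{V\smallsetminus W}$ and $L'$ by its analogous further twist, and repeat. The total degree strictly decreases at each iteration, so the process terminates with a subcurve $\tilde V\subset V$ carrying a line bundle $\tilde L$ of non-negative multidegree and $\deg\tilde L\leq d-2$; Proposition \ref{ecl} applied to $(\tilde V,\tilde L)$, together with Lemma \ref{e} to control the gluing across the chain of subcurves, then yields $h^0(\tilde V,\tilde L)\leq \lfloor(d-2)/2\rfloor+1$. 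The main obstacle is that the intermediate subcurves are neither stable nor free from separating nodes, so the $\md\geq 0$ case of the theorem does not apply to them directly; invoking Proposition \ref{ecl} on $\tilde V$ requires verifying the uniform hypothesis $0\leq \deg_{C_i}\tilde L\leq 2g_i$ on each irreducible component by direct inspection, which is tractable because $d\leq 4$ leaves only a handful of configurations. The resulting estimate
$$
h^0(X,L)\;\leq\;\lfloor(d-2)/2\rfloor+1\;<\;\tfrac{d}{2}+1
$$
proves (i) in the case $\md\not\geq 0$; moreover the right-hand side is strictly less than $|\md|$ when $|\md|\in\{1,2\}$, so $h^0(X,L)=|\md|$ forces $\md\geq 0$, establishing (ii) by contraposition.
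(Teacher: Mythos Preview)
Your overall strategy coincides with the paper's: for $\md\geq 0$ the preceding lemmas dispose of the problem, and for $\md\not\geq 0$ one shows that every section vanishes on a subcurve, so that $H^0(X,L)$ is identified with $H^0(\tilde V,\tilde L)$ for some proper subcurve $\tilde V$ and a twisted restriction $\tilde L=L_{\tilde V}(-\tilde V\cap\tilde V^c)$ of non-negative multidegree and degree $\leq d-2$. (Your iteration does terminate at such a pair: since $\tilde L = L_{\tilde V}(-G_{\tilde V})$, the balanced inequality applied directly to $\tilde V\subsetneq X$ gives $\deg\tilde L = d_{\tilde V}-\dd_{\tilde V}\leq d-2$ in one stroke; there is no need to track the degree through the intermediate steps.)

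The gap is in the endgame. You propose to invoke Proposition~\ref{ecl} on $(\tilde V,\tilde L)$, but two obstructions are left unresolved.

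\emph{First}, Proposition~\ref{ecl} requires the componentwise bound $\deg_{C_i}\tilde L\leq 2g_i$, and this can genuinely fail: a smooth rational component of $\tilde V$ may carry $\tilde L$-degree $1$ or $2$, and there is no ``handful of configurations'' to inspect, since $\tilde V$ is an arbitrary subcurve of an arbitrary stable curve. The paper sidesteps this entirely by never invoking Proposition~\ref{ecl} here; instead it applies the weak bounds of Fact~\ref{cl00}, Lemma~\ref{lemmino}, and Lemma~\ref{lemmino2} (which only need $\mdeg\geq 0$) to each connected component of $\tilde V$.

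\emph{Second}, and more seriously, $\tilde V$ need not be connected, and you do not address this. If $\tilde V$ has $m$ components, then even granting a Clifford bound on each piece one only gets $h^0(\tilde V,\tilde L)\leq \deg\tilde L/2 + m$, which is useless without controlling $m$. The paper handles this via the sharper inequality (\ref{BII}), which for each value of $a:=d_{\tilde V}-\dd_{\tilde V}\in\{0,\ldots,d-2\}$ bounds $\dd_{\tilde V}$ and hence (by Remark~\ref{snr}(\ref{snrm})) the number of connected components; the resulting case-by-case analysis over $(d,a)$ is what makes the argument go through. Your sketch is missing precisely this bookkeeping.
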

If  $|\md|=1,2$  the hypotheses on $X$ can be weakened as follows.
\begin{add}
\label{add}
If $|\md|=1$ the same holds if $X$ is semistable and has no separating lines.
If $|\md|=2$ the same holds   if $X$ is semistable and $\sep=\emptyset$.
\end{add}
\begin{proof}

If $\md\geq 0$ the
statement follows from    Lemmas~\ref{lemmino}, \ref{lemmino2} and Proposition~\ref{lemmino3}. 
So, assume $\md\not\geq 0$; set   $d=|\md|$.
We shall inductively define a useful subcurve $V\subseteq X$.
Let $V_0:=\Zd$ (see (\ref{Zd})).
Now define $V_1\subset X$ 
$$
V_1:= V_0\cup \bigcup_{C_i\cdot V_0>d_i=0}C_i;
$$
so $V_1$ is the union of $V_0$ with all components of degree $0$ which intersect $V_0$.
 Next
$$
V_2:=V_1\cup \bigcup_{\stackrel{C_i\not\subset V_1, d_i\leq 1,}{ C_i\cdot V_1>d_i}}C_i.
$$
Iterating
$$
V_{h+1}:=V_h\cup \bigcup_{\stackrel{C_i\not\subset V_h, d_i\leq h,}{ C_i\cdot V_h>d_i}}C_i \subset X.
$$
 
Of course, $V_0\subseteq V_1\subseteq \ldots \subseteq V_h\subseteq V_{h+1} \subseteq  \ldots \subseteq X$, therefore
there exists an   $m\geq 0$ minimum for which 
$V_n= V_m$ for every   $n \geq m$.
We set $V:=V_m$.

W e claim that  every $s\in H^0(X,L)$   vanishes identically on $V$.
It is clear that $s$ vanishes on $V_0$; let us prove the claim inductively.
Let $h\geq 0$ be such that $V_{h+1}$ is not equal to $V_h$; by   induction 
 $s$   vanishes identically on $V_h$.
Let $C\subset V_{h+1}$ be such that   $C$ is not contained in $V_h$. Then
  $s$ vanishes on $C\cap V_h$.
Now, $V_{h+1}$ is constructed  so that $C\cdot V_h>\deg_CL>0$, therefore $s$ vanishes on $C$.
The claim is proved.

If $V=X$  we have $H^0(X,L)=0$ and we are done.
So assume that   $Y:=V^c$ of $V$ is not empty.
Denote $G_Y\in \Div Y$ the  divisor cut out by $V$, so that 
\begin{equation}
\label{dG}
\deg G_Y=\dd_Y.
\end{equation}
Notice that 
\begin{equation}
\label{hG}
H^0(X,L)\cong H^0(Y, L_Y(-G_Y)).
\end{equation}
By   construction we have
\begin{equation}
\label{mdY}
\md_Y-\mdeg G_Y\geq 0. 
\end{equation}
We  claim that
\begin{equation}
\label{aG}
0\leq d_Y-\dd_Y\leq d-2.
\end{equation}
Set $a=d_Y-\dd_Y$. That $0\leq a$ follows from (\ref{dG}) and (\ref{mdY}). 
Now, notice that $w_Y<w$. Indeed, as $\md_V\not\geq0$ by construction,
$V=Y^c$ is not a union of exceptional components (see the initial observation). Hence 
(cf. \ref{w}) $w_V>0$ and $w_Y=w-w_V<w$.  
As $\md$ is balanced, we obtain
\begin{equation}
\label{BII}
\dd_Y+a=d_Y\leq \frac{\dd_Y}{2}+\frac{dw_Y}{w}<\frac{\dd_Y}{2}+d.
\end{equation}
Therefore
$\dd_Y\leq 2d-2a-1$.  As $\sep =\emptyset$ we have $\dd_Y\geq 2$.
We obtain
$$
2d-2a-1\geq 2
$$
hence $a\leq d-3/2$, so that $a\leq d-2$. (\ref{aG}) is proved.

We continue the proof with a case by case analysis. 
 
Case $d=1$.  The inequality (\ref{aG}) makes no sense, hence   $Y$ is empty,
i.e. $h^0(L)=0$. 
We conclude that   if $h^0(L)\neq 0$, 
then $\md\geq 0$, a case treated in Lemma~\ref{lemmino}.
The assumptions  $X$ stable and $\sep=\emptyset$ 
can clearly be weakened by, respectively, $X$ semistable, and
containing no
separating line (needed for  Lemma~\ref{lemmino}).   If $d=1$   the Theorem and the Addendum are proved.

Case $d=2$. By (\ref{aG})  we have  $d_Y=\dd_Y$, hence 
$\deg L_Y(-G_Y)=0$. Now, using (\ref{BII}) we get
$
\dd_Y=d_Y<\frac{\dd_Y}{2}+2
$, hence $\dd_Y\leq 3$. This yields that $Y$ is connected, by Remark~\ref{snr} (\ref{snrm}).
We can  apply Fact~\ref{cl00} to $L_Y(-G_Y)$, obtaining, with (\ref{hG}),
$$
h^0(X,L)=h^0(Y,L_Y(-G_Y)\leq 1.
$$
This concludes the proof if $d=2$. We also showed that if $h^0(X,L)=2$ then $\md\geq 0$.
Observe that the argument works if $X$ is semistable, so the Theorem and the Addendum are proved.
The remaining cases will be treated similarly.
 
Case $d=3$. By (\ref{aG}) we have two possibilities: either $\dd_Y=d_Y$ or $\dd_Y+1=d_Y$.
If $\dd_Y=d_Y$  we have, using (\ref{BII}), 
$
\dd_Y=d_Y<\frac{\dd_Y}{2}+3
$, hence $\dd_Y\leq 5$. Therefore $Y$ has at most two connected components
(by Remark~\ref{snr} (\ref{snrm})). Let $Y_i$ be a connected component of $Y$, then, by (\ref{mdY}),
 $d_{Y_i}=\dd_{Y_i}$, and   we can apply Fact~\ref{cl00} to $L_{Y_i}(-G_{Y_i})$ (with self-explanatory notation).
Hence $h^0(Y_i,L_{Y_i}(-G_{Y_i})\leq 1$; now $Y$ has at most two connected components, hence by (\ref{hG}) we obtain
$h^0(X,L)\leq 2$.

If $d_Y=\dd_Y+1$,
by (\ref{BII}) 
$
\dd_Y+1=d_Y<\frac{\dd_Y}{2}+3
$, hence  $\dd_Y\leq 3$, so  $Y$ is connected.
By (\ref{mdY}) and (\ref{aG}) we can apply Lemma~\ref{lemmino} to $L_Y(-G_Y)$; we get
$$
h^0(X,L)=h^0(Y,L_Y(-G_Y)\leq 2.
$$ 
This finishes the proof in case $d=3$.

Case $d=4$.
By (\ref{aG}) we have three possibilities:   $d_Y=\dd_Y$, $d_Y=\dd_Y+1$ or  $d_Y=\dd_Y+2$.

If $d_Y=\dd_Y$, we get
$
\dd_Y=d_Y<\frac{\dd_Y}{2}+4
$, hence $\dd_Y\leq 7$. Therefore $Y$ has at most three connected components
(again by Remark~\ref{snr} (\ref{snrm})).
Arguing as in the analogous case when $d=3$ ($d_Y=\dd_Y$) we see that 
$h^0(X,L)\leq 3$ so we are done.

If $d_Y=\dd_Y+1$,
by (\ref{BII}) 
$
\dd_Y+1=d_Y<\frac{\dd_Y}{2}+4
$, hence  $\dd_Y\leq 5$ and  $Y$ has at most two connected components.
If $Y$ is connected arguing as in the analogous case when $d=3$
we conclude 
$h^0(X,L)\leq 2$ and we are done.
If $Y$ has two connected components, $Y_1$ and $Y_2$, then we have $d_{Y_1}=\dd_{Y_1}$ and 
$d_{Y_2}=\dd_{Y_2}+1$. We can therefore apply Fact~\ref{cl00}
to get $h^0(Y_1, L_{Y_1}(-G_{Y_1}))\leq 1$, and 
\ref{lemmino} to get $h^0(Y_2, L_{Y_2}(-G_{Y_2}))\leq 2$. Summing up we obtain
$$
h^0(X,L)= h^0(Y_1, L_{Y_1}(-G_{Y_1}))+h^0(Y_2, L_{Y_2}(-G_{Y_2}))\leq 3
$$
and we are done.
Finally, if $d_Y=\dd_Y+2$, by the usual argument we get $\dd_Y\leq 3$ hence $Y$ is connected.
By  Lemma~\ref{lemmino2} we have $3\geq h^0(Y, L_Y(-G_Y))=h^0(X,L)$ and we are done.
\end{proof}
\subsection{Counterexamples}
\label{cex}
\begin{example}
\label{P1}
{\it Failure of Clifford's theorem:   $d=1$,  $\md\geq 0$ balanced ($X$ contains a separating line).}
Let $X=C_1\cup C_2\cup C_3\cup C_4$ with, for $i,j\geq 2$,
$C_i\cap C_j=\emptyset$ and $C_1\cdot C_i =1$ (the  dual graph of $X$ is in  Figure~\ref{efig1}).
Assume $C_1=\pr{1}$ (hence $C_1$ is a separating line) and $g_i=h\geq 1$ (hence $X$ is   stable).
Thus $g=3h$ and $w=6h-2$. Set
$\md =(1,0,0,0)$,
one checks that $\md \in \BX1$.
Let
$$
L:=(\O_{C_1}(1), \O_{C_2},\O_{C_2},\O_{C_4}).
$$
Then, as all $L_i$ are free from base points, we get
$ 
h^0(X,L)=\sum_1^4h^0(C_i,L_i)-3=2.
$ 
\begin{figure}[!htp]

$$\xymatrix@=1pc{
&&*{\bullet}  \ar@{-}[drr] ^(.00001){C_1}^(.99){C_4}  \ar@{-}[dll] _(.999){C_2} \ar@{-}[d]^(.8){C_3}\\
*{\bullet}  && *{\bullet}&&*{\bullet}
}$$
\caption{Dual graph of the curve in Example~\ref{P1}.}
\label{efig1}
\end{figure}
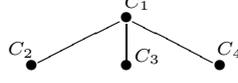
\end{example}

\begin{example}
\label{P2}
{\it $\Cl L=0$  with $\mdeg L \in \BX1$,   $\mdeg L\not\geq 0$
($\sep \neq \emptyset$).}
Let $X=C_1\cup C_2\cup C_3$ with, 
$C_1\cdot C_2=2$,\   $C_2\cdot C_3 =1$ and $ C_1\cap C_3=\emptyset$ 
(see the picture below).
Thus $n=C_2\cap C_3$ is a separating node;
for $i=2,3$, write  $q_i\in C_i$ the point corresponding to this node.
Assume $g_1=g_2=1$ and $g_3=4$, thus $g=7$.
 Set
$\md =(1,-1,1)$;
one checks that $\md \in \BX1$. Call $Z=C_1\cup C_2\subset X$ and
let $L_{1,2}\in \Pic^{(1,-1)}Z$ be arbitrary. Note that $h^0(Z,L_{1,2})=0$. Set
$$
L:=(L_{1,2},\O_{C_3}(q_3)).
$$
Then, as $L_{1,2}$ and $\O_{C_3}(q_3)$ both have a base point in  the respective  branch ($q_2$ and $q_3$) of $n$, we get
$ 
h^0(X,L)=h^0(Z,L_{1,2})+h^0(C_3,\O_{C_3}(q_3))=1
$. 
\end{example}

\begin{figure}[!htp]
$$\xymatrix@=1pc{
*{\bullet}   \ar@{-}@/_/[rr]_(.001){C_1}_(.99){C_2} \ar@{-}@/^/[rr]
&&*{\bullet} \ar@{-}[rr] _(.999){\ C_3}&&*{\bullet}
}$$
\caption{Dual graph of the curve in Example~\ref{P2}.}
\label{fig2}
\end{figure}
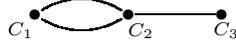

\begin{example}
\label{cl2no}
{\it Failure of Clifford's theorem: $d=2$, $\md $ balanced ($\sep \neq\emptyset$).}
Let $X=C_1\cup C_2\cup C_3\cup C_4$ with, for $i,j\geq 2$,
$C_i\cap C_j=\emptyset$ and $C_1\cdot C_i =1$ (same dual  graph as in Figure~\ref{efig1}). Let $g_1=1$ and $g_2=g_3=g_4=3$ so that  $g=10$.
Let $\md =(-1,1,1,1)$; one checks that $\md$ is the unique balanced multidegree
of degree $2$.
Let $L_1$ be any line bundle of degree $-1$ on $C_1$. For $i=2,3,4$ denote by $q_i\in C_i$
the point corresponding to the node $C_1\cap C_i$. Consider the degree $2$ line bundle on $X$
$$
L=(L_1,\O_{C_2}(q_2),\O_{C_3}(q_3),\O_{C_4}(q_4)).
$$
As every section of $\O_{C_i}(q_i)$ vanishes in $q_i$, we get that $H^0(X,L)=3$.
\end{example}

\begin{example}
\label{cl3no}
{\it Failure of Clifford's theorem: $d\geq 3$, $\md $ balanced, $\sep =\emptyset$ ($X$ strictly semistable).}
For $d\geq 3$ consider the curve
$X=C_1\cup\ldots\cup C_{2d}$ whose dual graph is a $2d$-cycle, i.e. a closed polygon with $2d$ vertices,
$C_1,\ldots, C_{2d}$. We set $C_i\cdot C_{i+1}=C_{2d}\cdot C_1=1$ for all $i\geq 1$ and
 $C_i\cdot C_j=0$ for all other intersections. So $X$ has $2d$ nodes.
Let $C_{2i-1}\cong\pr{1}$ for all $i$, so that the odd indexed components are exceptional; now let  
 all the even indexed components  be  smooth of genus $1$. Therefore
$g=d+1$.
Now  choose
the multidegree $\md=(1,0,1,\ldots, 1,0)$ and set
$L_{C_{2h}}\cong \O_{C_{2h}}$ for all $h$  (of course $L_{C_{2h+1}}\cong \O_{\pr{1}}(1)$). 
One easily checks that $\md$ is balanced.
It is also clear that for any $L\in \Pic X$ whose restrictions to the $C_i$ are as above,
we have
$ 
h^0(X,L)\geq 2d+d-2d=d.
$ 
So Clifford's inequality fails.
\end{example}

\begin{example}
\label{cl3n}
{\it Failure of  Clifford's theorem: $d\geq 3$, $\md \geq 0$, $\sep \neq\emptyset$.}
Let $X=C_1\cup C_2\cup  C_3$ with $C_1$ of genus $1$ and $g_i\geq 1$.
Let $C_1\cdot C_2=C_1\cdot C_3=1$ and $C_2\cdot C_3=0$
(the dual graph of $X$ is obtained from the graph in Figure~\ref{efig1} by removing the vertex $C_4$ and the edge adjacent to it). Let $L=(L_1, \O_{C_2}, \O_{C_3})\in \Pic^d X$
with $\deg L_1=d$. Then $h^0(L)=d$.
\end{example}

\begin{example}
\label{cl5no}
{\it Failure of  Clifford's theorem:     $d=5$, \ $\md $ balanced and   $\sep =\emptyset$.}
Let $X=C_1\cup C_2\cup C_3\cup C_4\cup C_5$ with, for $i,j\geq 2$,
$C_i\cap C_j=\emptyset$ and $C_1\cdot C_i =2$ for all $i\geq 2$. 
So every node of $X$ lies on $C_1$, and $\delta =8$ (the dual graph of $X$ is in Figure \ref{ex5}).
Now let $h$ be any nonnegative integer.
Let $C_1$ be of genus $g_1=h$, and let $C_i$ have  genus $h+3$ for every $i\geq 2$.
Hence $g=5h+16$.
We now pick $d=5$ and 
$ \md =(-3,2,2,2,2).$ 
It is straightforward to check that  $\md$ is balanced.

Now  for $i\geq 2$,  set $\{p_i,q_i\}=C_1\cap C_i\subset C_i$.
Let $L$ be any line bundle whose restrictions $(L_1,\ldots, L_5)$ 
are as follows. $L_1\in \Pic^{-3}C_1$ is arbitrary, while
$L_i=\O_{C_i}(p_i+q_i)$, for   $i=2,3,4,5$.

Now every section $s$ of $L$  vanishes identically on $C_1$, hence $s$ vanishes on $p_i, q_i$.  
Conversely, any quadruple of sections $s_i\in H^0(C_i,L_i(-p_i-q_i))$, for $i=2,\ldots,5$, glues to  a section of $L$.
We
conclude
$ 
h^0(X,L)= \sum_{i=2}^5h^0(C_i,L_i(-p_i-q_i))=4.
$ 
So $L$ violates Clifford inequality.
Similar examples exist for higher degree $d$.

\begin{figure}[!htp]
$$\xymatrix@=1pc{
*{\bullet}&& &&*{\bullet} 
\\
 && *{\bullet}
 \ar@{-}@/_/[ull]_(.2){C_1}_(.9){C_2}    \ar@{-}@/^/[dll]^(.9){C_3}    \ar@{-}@/^/[ull]   \ar@{-}@/_/[dll] 
 \ar@{-}@/_/[urr]    \ar@{-}@/^/[drr]  \ar@{-}@/^/[urr]^(.9){C_4}   \ar@{-}@/_/[drr] _(.9){C_5}  
\\
*{\bullet}&& &&*{\bullet}
}$$
\caption{Dual graph of the curve in Example~\ref{cl5no}.}
\label{ex5}
\end{figure}
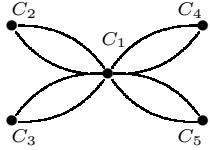
\end{example}

\section{Applications}
If $g\geq 3$ we denote by $\Hgb\subset \Mgb$ the closure of the locus of hyperelliptic curves.
Recall that $\Hgb$ is an irreducible subscheme of dimension $2g-1$.
Following a  common practice (see \cite{HM}), we say that a stable curve $X$ is {\it  hyperelliptic} if $[X]\in \Hgb$.
\begin{defi}
\label{wh} We call a stable curve $X$ {\it weakly hyperelliptic} if there exists a balanced line bundle $L\in \picX{2}$
such that $h^0(X,L)\geq 2$.
\end{defi}

\begin{lemma} 
\label{hwh}
 If $X$ is  hyperelliptic,   $X$ is weakly  hyperelliptic.
\end{lemma}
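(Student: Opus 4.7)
The plan is to prove this by a specialization argument, using the properness of the compactified Picard scheme and the modular description recalled in Section \ref{Riemann}.

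Since $[X]\in \Hgb$ and $\Hgb$ is the closure of the locus of hyperelliptic smooth curves, there exists a one-parameter smoothing $\X\to B=\Spec R$, with $R$ a DVR with residue field $k$ and fraction field $K$, such that the special fiber is $X$ and the generic fiber $\X_K$ is a smooth hyperelliptic curve of genus $g$. On $\X_K$ there exists a $g^1_2$, i.e.\ a line bundle $\L_K\in \Pic^2\X_K$ with $h^0(\X_K,\L_K)=2$.

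Next, I would invoke the relative compactified Picard scheme $\pdbst$ in degree $d=2$ from \cite{caporaso} (and \cite{OS}, \cite{simpson}, \cite{pandha}), whose formation is proper and commutes with the base $B\to\mgbst$. By the modular description recalled in Subsection \ref{w}, the fibers of $\pdbst\to\mgbst$ parametrize equivalence classes of balanced line bundles of degree $2$ on semistable curves whose stabilization is the corresponding stable curve. Applying the valuative criterion of properness to $\L_K$, after a finite base change $B'\to B$, the line bundle $\L_K$ extends to a balanced line bundle $\L$ on a semistable model $\X'\to B'$ whose generic fiber is still smooth hyperelliptic and whose special fiber $X'$ is a (quasistable) semistable curve with stabilization $X$. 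The restriction $L:=\L_{|X'}$ is then a balanced line bundle on $X'$ of degree $2$.

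Finally, semicontinuity of $h^0$ in the flat family $\X'\to B'$ yields
\[
h^0(X',L)\;\geq\; h^0(\X_K,\L_K)\;=\;2,
\]
so $L$ is a balanced line bundle of degree $2$ on a semistable curve $X'$ stably equivalent to $X$ with at least two independent sections; by Definition \ref{wh} (read via the modular description of $\PXb$), this exhibits $X$ as weakly hyperelliptic. The only potential obstacle is the existence of the balanced extension, and this is guaranteed by the properness of $\pdbst$ together with its modular interpretation, both of which are cited from the literature in Section \ref{Riemann}.
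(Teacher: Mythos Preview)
Your approach and the paper's share the same skeleton: take a one-parameter smoothing whose generic fiber is hyperelliptic, extend the $g^1_2$, and use semicontinuity. The paper, however, proceeds more directly. It takes a \emph{regular} total space $\X\to\Spec R$ whose closed fiber is $X$ itself, extends $\L_K$ to some $\L\in\Pic\X$ (always possible on a regular surface), and then twists $\L$ by a divisor supported on the closed fiber so that $L=\L_{|X}$ becomes balanced. This produces a balanced $L\in\Pic^2X$ with $h^0(X,L)\geq 2$, which is precisely what Definition~\ref{wh} demands.

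Your route through properness of $\pdbst$ has a genuine gap at the last step. The valuative-criterion limit yields a balanced line bundle $L$ on a quasistable curve $X'$ whose stabilization is $X$, and in general $X'$ may be strictly semistable. Definition~\ref{wh} asks for a balanced $L\in\Pic^2X$, not for a balanced line bundle on some $X'$ stably equivalent to $X$; your parenthetical ``read via the modular description of $\PXb$'' is a reinterpretation of the definition, not an argument. Boundary points of $\PXb$ need not correspond to line bundles on $X$, and you have not explained how to pass from a balanced $L'\in\Pic^2X'$ with $h^0\geq 2$ to a balanced $L\in\Pic^2X$ with $h^0\geq 2$. The cleanest repair is to abandon the compactified Picard scheme here and argue as the paper does: on a regular smoothing with closed fiber $X$, every multidegree is twist-equivalent to a balanced one, so one stays on $X$ throughout.
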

\begin{remark}
The converse  is  false, see Remark~\ref{whnh}.\end{remark}
\begin{proof}
As $[X]\in \Hgb$ there exists a one parameter smoothing of $X$,
 $f:\X\to \Spec R$, whose generic fiber is 
a smooth hyperelliptic curve. We can also assume that $\X$
is regular, and that there exists $\L\in \Pic \X$ such that
the restriction of $\L$ to the generic fiber is the hyperelliptic bundle.
Set $L=\L_{|X}$.
Up to  tensoring $\L$ with a divisor supported entirely on the closed fiber $X$ we can assume that $L$ is
balanced. By uppersemicontinuity of $h^0$ we have $h^0(X, L)\geq 2$,
so we are done.
\end{proof}

\subsection{Clifford index of   two-components curves} 
Recall that   smooth hyperelliptic  curves can be characterized 
using Clifford's inequality; 
the same holds for irreducible curves (see  \cite[Sec. 5]{Ctheta}).
We shall generalize this to stable curves having two components.
So, let $X=C_1\cup C_2$ have genus $g\geq 2$; we proved in Theorem~\ref{clvine} that the Clifford's inequality holds.

The Clifford index of a line bundle has been introduced in  \ref{index}.  
Now, if $X$ is   irreducible, its Clifford index    is 
 defined as $\Cl X=\min \{\Cl L\}$
where $L $ varies in the set of line bundles on $X$ such that $h^0(X,L)\geq 2$ and $h^1(X,L)\geq 2$. By   Clifford's theorem,
$\Cl X\geq 0$; moreover, $\Cl X=0$ if and only if  $X$ is hyperelliptic.
We extend the definition
of the Clifford index to a semistable curve $X$ as follows.
\begin{equation}
\label{ClX}
\Cl  X =\min\{\Cl L \  |\  \mdeg L\in \BXd, \  h^0(X,L)\geq 2,\  h^1(X,L)\geq 2 \}.
\end{equation}

By  Theorem~ \ref{clvine}, $\Cl X\geq 0$ if $X=C_1\cup C_2$. We now ask: when is $\Cl X=0$?
To answer this question we use the following terminology.
A  curve $X$ (reduced, nodal, of genus $g$) is called  a {\it binary curve} if it is the
 union of two copies of $\pr{1}$ meeting transversally in $g+1$ points (cf. \cite{Cbin}). 
 
\begin{prop}
\label{clwh} Let $X=C_1\cup C_2$ be semistable.
\begin{enumerate}
\item
$\Cl X=0$ if and only if $X$ is weakly hyperelliptic.
\item
If $X$ is weakly hyperelliptic, then $C_1\cdot C_2\leq 2$ unless $X$ is a hyperelliptic binary curve.
\end{enumerate}

\end{prop}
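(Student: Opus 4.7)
\noindent\emph{Proof plan.} The forward direction of~(1) is immediate. If $X$ is weakly hyperelliptic, pick a balanced $L\in\picX{2}$ with $h^0(X,L)\geq 2$. Theorem~\ref{clvine} forces $h^0(X,L)=2$, hence $\Cl L=0$, and Riemann--Roch gives $h^1(X,L)=g-1\geq 2$ once $g\geq 3$. Thus $L$ lies in the set defining $\Cl X$, so $\Cl X\leq 0$; the reverse inequality is again Theorem~\ref{clvine}.

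For the converse in~(1), I would pick a balanced $L$ realising $\Cl X=0$, so that $d:=\deg L=2(h^0(L)-1)\leq 2g-4$, and aim to distill from $L$ a balanced degree-$2$ bundle with $h^0\geq 2$. The first step is to reduce to the hypotheses of Proposition~\ref{ic}: the strict Clifford inequality from Case~0 in the proof of Theorem~\ref{clvine} rules out $d_i<0$ (since $d\leq 2g-2$), and an analogous argument based on~(\ref{nb}) rules out $d_i>2g_i$. With $0\leq d_i\leq 2g_i$ in hand, Proposition~\ref{ic}(\ref{ic0}) gives $\dd:=C_1\cdot C_2=2$ and $\Cl L_1=\Cl L_2=0$, and its proof, through Lemma~\ref{d2}, extracts the neutral-pair relations $p_1\sim_{L_1}p_2$ and $q_1\sim_{L_2}q_2$ at the two pairs of branches. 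I would then set $L'_{C_1}:=\O_{C_1}(p_1+p_2)$ and $L'_{C_2}:=\O_{C_2}$ and glue at both nodes so that a distinguished section of $L'_{C_1}$ matches the constant section on $C_2$; Lemma~\ref{d2} applied to $L'$ then yields $h^0(X,L')\geq 2$. The main obstacle here is two-fold: (a) verifying that the gluing can be chosen to produce two independent sections (which uses that $L_1$ is a multiple of the $g^1_2$ determined by the neutral pair, and needs a symmetric construction on $C_2$ in the degenerate case $d_1=0$), and (b) the bookkeeping check that $L'$ of multidegree $(2,0)$ satisfies the balanced inequality~(\ref{BI}) on every subcurve of $X$.

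For part~(2), suppose $X$ is weakly hyperelliptic with $\dd\geq 3$ and let $L\in\picX{2}$ be balanced with $h^0(L)\geq 2$, so $\Cl L=0$. If both $0\leq d_i\leq 2g_i$ held, Proposition~\ref{ic}(\ref{ic0}) would force $\dd=2$, contradicting $\dd\geq 3$. Hence one of those bounds fails; since $|\md|=2$, the case $d_i<0$ is ruled out by the Case~0 argument of Theorem~\ref{clvine}, while $d_i>2g_i$ forces $g_i=0$ and thus $C_i\cong\pr{1}$. A direct section-count, using the $\dd\geq 3$ gluing constraints between $H^0(C_1,L_1)$ and $H^0(C_2,L_2)$, rules out the asymmetric situation in which only one of $g_1,g_2$ vanishes; consequently $g_1=g_2=0$ and $X=\pr{1}\cup\pr{1}$ meeting in $\dd=g+1$ nodes, i.e.\ $X$ is a binary curve. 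The last step, upgrading this binary $X$ to a \emph{hyperelliptic} binary curve (that is, $[X]\in\Hgb$), is the main obstacle of the proposition; I would handle it by translating the balanced $g^1_2$ on $X$ into the symmetric pairing of nodes that characterises hyperelliptic binary curves in~\cite{Cbin}.
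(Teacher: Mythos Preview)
Your reduction in part~(1) to the range $0\leq d_i\leq 2g_i$ is the main gap. You claim that ``an analogous argument based on~(\ref{nb}) rules out $d_i>2g_i$'', but this is false: looking at Case~2 in the proof of Theorem~\ref{clvine}, equality in Clifford's inequality \emph{can} occur when $d_1=2g_1+e_1$ and $d_2=2g_2+e_2$ with $e_1=e_2\geq 1$. In particular, (\ref{nb}) does not even apply, since it relies on $\Cl L_i\geq 0$, which needs $d_i\leq 2g_i$. The paper treats this case by a separate, nontrivial argument: it shows (via Lemma~\ref{bpl} and a base-point count on $L_2(-\sum_{j\leq\beta}q_j)$ of degree $2g_2-1$) that $g_1=g_2=0$, so $X$ is a binary curve, and then invokes~\cite{Cbin} to identify weakly hyperelliptic with hyperelliptic in that case. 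This is the most substantive step in the proof, and it is absent from your plan.

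There is a second gap in part~(1): Proposition~\ref{ic} has $\dd\geq 2$ as a standing hypothesis, so even after restricting to $0\leq d_i\leq 2g_i$ your argument says nothing when $\dd=1$. The paper uses Corollary~\ref{ecor} (which gives $\dd\leq 2$ and the parity dichotomy for the $d_{Z_i}$) and then handles $\dd=1$ separately using the explicit description of $B_2(X)$ from Example~\ref{ct2}. A smaller point: your candidate $L'$ has multidegree $(2,0)$, but there is no reason this is balanced; the paper instead chooses $(0,2)$ with $g_1\leq g_2$ and checks balancedness from the hypothesis that $(d_1,d_2)$ (or $(0,d)$) was balanced to begin with.

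Your approach to part~(2) is different from the paper's (which extracts~(2) as a byproduct of the full proof of~(1), working with the degree-$d$ bundle rather than a degree-$2$ one) and is essentially sound in outline, though the ``direct section-count'' for the asymmetric case and the final appeal to~\cite{Cbin} are exactly where the work lies.
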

\begin{proof}
As we said,
 Theorem~\ref{clvine} yields   $\Cl  X\geq 0$. Therefore  if $X$ is weakly hyperelliptic, then $\Cl X=0$.

Conversely, suppose $\Cl X=0$; let $L\in \Pic^{\md}(X)$ with $\md \in \BXd$,
such that $h^0(L)=d/2+1$.
If $d=2$ there is nothing to prove, so   assume $d>2$.
As usual, set $\dd =C_1\cdot C_2$.
We  must prove that there exists a   $J\in \picX{2}$ such that $h^0(J)=2$ and $\mdeg J\in \BXt$.

$\bullet$
Assume first $d_i\leq 2g_i$ for $i=1,2$. By Corollary~\ref{ecor} we have $\dd \leq 2$.

{\it Suppose $\dd=2$};  again by Corollary~\ref{ecor}  we have $\Cl L_1=\Cl L_2=0$ 
and, if $d_i\geq 2$, then $\Cl L_i(-C_1\cap C_2)=0 $.

If  $d_1=0$ then  $L_1=\O_{C_1}$ and $L_2=H_2^{d/2}$
for some $H_2\in W^1_2(C_2)$
(see \cite[subsec. 5.2]{Ctheta}). By hypothesis $(0,d)\in \BXd$, which easily implies that $g_2>g_1$,
and hence that  multidegree $(0,2)$ is balanced.
Consider the line bundle $M:=(\O_{C_1},H_2)$ on the normalization $\Xn$ of $X$;
as $\Cl H_2^{d/2}(-C_1\cap C_2)=0 $ we have 
$h^0(C_2,H_2(-C_1\cap C_2))=1$, hence by Lemma~\ref{d1l}
 there exists $J\in \FM$ such that
$h^0(X,J)=h^0(\Xn,M)-1=2$. Since  $\mdeg J=(0,2)$ is balanced we are done.

If   $d_i>0$ for $i=1,2$ then there exists $H_i\in W^1_2(C_i)$ such that  $L_i=H_i^{d_i/2}$, for both $i$.
Suppose $g_1\leq g_2$;  arguing as above we see that $(0,2)$ is balanced and that
there exists $J\in W^1_{(0,2)}(X)$ such that the pull-back of $J$ to the normalization of $X$
is $(\O_{C_1},H_2)$. Up to switching $C_1$ and $C_2$ we are done.

{\it Suppose $\dd=1$.}
If $(1,1)$ is balanced, then $X$ is (trivially) weakly hyperelliptic (see Lemma~\ref{ctwh}).
So assume $(1,1)$ not balanced. By Example~\ref{ct2} we may assume  $g_1<g_2$ and  $B_2(X)=\{(0,2)\}$.
By Corollary~\ref{ecor}, $\Cl L_2=0$, therefore $C_2$ is hyperelliptic. Let $H_{C_2}$ be its hyperelliptic bundle, and set 
$J=(\O_{C_1}, H_2)$; it is clear that $h^0(X,J)=2$.

$\bullet$ Now assume that $d_1= 2g_1+e$ with $e\geq 1$.
We will prove that $X$ is  a binary curve. In this case  the result  is known:   a binary curve is hyperelliptic if and only if it
is weakly hyperelliptic  (\cite[Sec. 3]{Cbin}).

We are in the situation treated in the proof of \ref{clvine},
from which we now use the notation. We saw there that
   the Clifford inequality can be an equality
only in Case 2, at the very end. More precisely,   in order for
$\Cl L=0$ we must have $d_2=2g_2+e$ 
(so that 
$ 
d=2g_1+2g_2+2e$) and 
\begin{equation}
\label{lll}
l=l_1+l_2-e-1.
\end{equation}
Now, as $d<2g-2$ and $g=g_1+g_2+\dd-1$ we have $2(g_1+g_2+e)<2(g_1+g_2+\dd-2)$, hence
\begin{equation}
\label{edd}
e\leq \dd -3.
\end{equation}
Now let $\beta: =e+1$, so  that $\beta \leq \dd -2$. Set 
$$
Y=(C_1\coprod C_2)/_{\{p_i=q_i,\  \  i=1,\ldots,  \beta\}}  \stackrel{\nu}{\la} X,
$$
i.e. $\nu$ is the normalization of $X$ at $\dd-\beta$ nodes.
Let $M=\nu^*L$;
we have, by Lemma~\ref{e} (\ref{e=}), 
$$
h^0(Y,M)=l_1+l_2-e-1=l=h^0(X,L)
$$
using (\ref{lll}). Therefore for all $
i=\beta +1,\ldots, \dd$, we have $p_i\sim_Mq_i$, by Lemma~\ref{d1l}.
This  implies that,  for all $i\geq \beta +1$, 
$p_i$ is a base point of $L_1(-\sum_{j=1}^{\beta}p_j)$ and
$q_i$ is a base point of $L_2(-\sum_{j=1}^{\beta}q_j)$ (by Lemma~\ref{bpl}).
Now 
$$
\deg L_1(-\sum_{j=1}^{\beta}p_j)=2g_1+e-\beta=2g_1-1,\       \  \deg
L_2(-\sum_{j=1}^{\beta}q_j)=2g_2+e-\beta=2g_2-1.
$$
If $X$ is not a binary curve, we may assume
   $g_2\geq 1$. 
Then,  $L_2(-\sum_{j=1}^{\beta}q_j)$, having degree $2g_2-1$, can have at most one
base point. Therefore $\dd -\beta \leq 1$, i.e. $\dd -e \leq 2$, which is in contradiction with (\ref{edd}).
We conclude  that $X$ is a binary curve.
\end{proof}
\begin{nota}{\it Curves  of compact type}
For any integer $h$ with $1\leq h\leq g/2$, let
$\Delta_h$ be the divisor in $\Mgb$ whose general point represents a curve $X=C_1\cup
C_2$ with
$C_i$ smooth,
$C_1\cdot C_2=1$ and $g_1=h$. 
Fix such an $X$; for $i=1,2$ we shall denote by $q_i\in C_i$   the branches of the node of $X$.
We computed $B_2(X)$ in Example~\ref{ct2}.
\begin{lemma}
\label{ctwh} 
Let $X=C_1\cup C_2$ with $C_1\cdot C_2=1$ and $1\leq g_1 \leq g/2$.

Let $g_1\geq (g+1)/4$. Then $X$ is weakly hyperelliptic; more precisely,
$(1,1)$ is balanced  and $W^1_{(1,1)}(X)=\{ (\O_{C_1}(q_1),\O_{C_2}(q_2)\}$.

Let $ g_1<(g+1)/4$.  Then 
 $X$ is weakly hyperelliptic if and only if $C_2$ is hyperelliptic,
if and only if $W^1_{(0,2)}(X)=\{(\O_{C_1},  H_{C_2} )\}$.
\end{lemma}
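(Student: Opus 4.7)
The plan is to analyze $W^1_{\md}(X)$ for each balanced multidegree $\md\in B_2(X)$ computed in Example~\ref{ct2}, working through the normalization $\nu:Y=C_1\coprod C_2\to X$ of $X$ at its unique node $n$, whose branches are denoted $q_1\in C_1$ and $q_2\in C_2$. For any $L=(L_1,L_2)\in \Pic X$ and $M:=\nu^*L$, equation~(\ref{eleq}) gives $h^0(Y,M)-1\le h^0(X,L)\le h^0(Y,M)$; by Lemma~\ref{d1l} the upper bound is attained if and only if $q_1\sim_M q_2$, and since $q_1,q_2$ lie on different connected components of $Y$, Remark~\ref{npr}(\ref{npb}) translates this into the condition that $q_i$ be a base point of $L_i$ for $i=1,2$. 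This base-point reduction is the central tool of the proof.

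For the first statement ($g_1\ge (g+1)/4$), Example~\ref{ct2} yields $(1,1)\in B_2(X)$. Given $L\in W^1_{(1,1)}(X)$, Clifford's inequality on each irreducible component forces $h^0(C_i,L_i)\le 1$, and then $2\le h^0(X,L)\le h^0(C_1,L_1)+h^0(C_2,L_2)$ forces $h^0(C_i,L_i)=1$ for both $i$. So each $L_i\cong\O_{C_i}(p_i)$ for a unique $p_i$, which is the unique base point of $L_i$. To achieve $h^0(X,L)=2=h^0(Y,M)$, the base-point reduction forces $p_i=q_i$, hence $L_i=\O_{C_i}(q_i)$. Conversely, this choice satisfies the base-point condition, so Lemma~\ref{d1l} returns $h^0(X,L)=2$.

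For the second statement ($g_1<(g+1)/4$), Example~\ref{ct2} yields $B_2(X)=\{(0,2)\}$; the hypotheses $g_1\ge 1$ and $g_1<(g+1)/4$ together force $g\ge 4$ and $g_2\ge 2$, so that ``$C_2$ hyperelliptic'' is meaningful. Let $L\in W^1_{(0,2)}(X)$. Clifford yields $h^0(C_1,L_1)\le 1$ and $h^0(C_2,L_2)\le 2$. First I would rule out $h^0(C_1,L_1)=0$: in that case every section of $L$ must vanish identically on $C_1$ and hence at $q_2$ on the $C_2$-side, giving $h^0(X,L)\le h^0(C_2,L_2(-q_2))\le 1$ by Clifford, contradicting $h^0(X,L)\ge 2$. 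So $L_1=\O_{C_1}$, which has no base point at $q_1$; the base-point reduction now gives $h^0(X,L)\le h^0(Y,M)-1$, and combining with $h^0(X,L)\ge 2$ forces $h^0(C_2,L_2)\ge 2$, whence equality in Clifford on $C_2$ at degree~$2$ makes $C_2$ hyperelliptic with $L_2=H_{C_2}$. Conversely, if $C_2$ is hyperelliptic, the lower bound in~(\ref{eleq}) applied to $L=(\O_{C_1},H_{C_2})$ yields $h^0(X,L)\ge h^0(Y,M)-1=2$, and the equality $W^1_{(0,2)}(X)=\{(\O_{C_1},H_{C_2})\}$ follows from the analysis above.

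The main obstacle is keeping the base-point conditions at $q_1$ and $q_2$ straight across the case analysis, together with correctly invoking Clifford's rigidity on $C_2$ (degree~$2$ with $h^0=2$ on an irreducible curve of genus $\ge 2$ forces the line bundle to be the hyperelliptic bundle) in the second statement.
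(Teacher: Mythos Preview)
Your proof is correct and follows essentially the same route as the paper, with one cosmetic difference: the paper packages the uniqueness arguments by invoking Corollary~\ref{ecor} (the odd--odd case for $\md=(1,1)$ giving $\Cl L_i(-q_i)=0$, the even--even case for $\md=(0,2)$ giving $\Cl L_2=0$), whereas you unpack the same gluing analysis directly via Lemma~\ref{d1l} and Remark~\ref{npr}(\ref{npb}). Both arguments rest on the same base-point reduction at the separating node, so this is a difference of presentation rather than of method.
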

\begin{proof}
Set $L=(\O_{C_1}(q_1),\O_{C_2}(q_2))\in \Pic X$.
It is clear that $h^0(X,L)=2$.
If $g_1\geq (g+1)/4$,  then $L$ is balanced. Conversely,
let $L'\in W^1_{(1,1)}(X)$;  
by Corollary~\ref{ecor} we have $L'=(\O_{C_1}(q_1),\O_{C_2}(q_2))$, so the first part is proved.

Now suppose $g_1< (g+1)/4$, then $(0,2)$ is the unique balanced multidegree.
If $C_2$ is hyperelliptic,
  the balanced line bundle
$L=(\O_{C_1},  H_{C_2} )\in \Pic X$ has, of course, $h^0(X,L)=2$. So, $X$ is weakly hyperelliptic.
Conversely, if  there exists   $L\in \Pic^{(0,2)}X$ such that $h^0(L)=2$,
we can apply Corollary~\ref{ecor} (we necessarily have $g_2\geq 3$ by hypothesis)
and conclude that $h^0(C_2,L_2)=2$, so we are done.
 \end{proof}
\begin{remark}
\label{whnh}
The previous result shows that there exist (plenty of) weakly hyperelliptic curves that are  not hyperelliptic.
Indeed,  it is well known that  a curve of compact type $X=C_1\cup C_2$ is hyperelliptic if and only if
both $C_1$ and $C_2$ are hyperelliptic, and the two branches, $q_1$  and $q_2$, are Weierstrass points (cf. \cite{CH}  for example).
Also, 
there exist  globally generated balanced line bundles $L\in W^1_2(X)$ which are not limits
of hyperelliptic bundles of smooth curves (indeed $(\O_{C_1},  H_{C_2} )$ is always globally generated).
\end{remark}
\end{nota}

\subsection{Hyperelliptic and weakly hyperelliptic curves.}

The next definition will be used only when $\sep =\emptyset$.
\begin{defi}
\label{bp} A pair $(C,D)$ of (smooth, rational) components of $X$  is
 called    a {\it binary-pair} (or a {\it $\B$-pair} for short) of $X$ 
if $C$ is a separating line of $D^c$ and  $D$ is a separating line of $C^c$.
The subcurve $C\cup D$ will be called a $\B$-{\it subcurve}.
\end{defi}

 \begin{example}
Let $X$ be a binary curve
(defined before Proposition~\ref{clwh}); then its irreducible components form a $\B$-pair.
Also, if $X'=C\cup D\cup  E_1\cup\ldots \cup E_s$ is a semistable curve whose stabilization is a binary curve $X=C\cup
D$, then 
$(C,D)$ is a 
$\B$-pair of $X'$ .
\end{example}
Let  $(C,D)$ be a binary pair of $X$.  Denote $C\cap D=\{n_1,\ldots, n_l\}$, with $l\geq 0$,
and $q_C^i\in C$, $q_D^i\in D$ the two branches of $n_i$. If $C\cup D\neq X$, there is a decomposition
$X=(C\cup D) \cup (Z_1 \coprod\ldots \coprod Z_m)$ where $Z_j$ are connected and $Z_j\cdot C=Z_j\cdot D=1$ for all
$j$. Denote  $p_C^j=C\cap Z_j$ and $p_D^j=D\cap Z_j$. Let $n=l+m$ ($m\geq 0$);
now the ordered $n$-tuples 
\begin{equation}
\label{mark}
G_C:=(q_C^1,\ldots ,q_C^l,p_C^1 \ldots ,p_C^m)\subset C,\  \ G_D:=(q_D^1,\ldots ,q_D^l,p_D^1 \ldots ,p_D^m)\subset D
\end{equation}
give a structure of $n$-marked curve on   $C$ and $D$.
We say that  $(C,D)$ is a {\it   special $\B$-pair} if $(C;G_C)$ and $(D;G_D)$ are isomorphic as $n$-marked
curves.

\begin{thm}
\label{c2=}
Let $X$ be   semistable with $\sep =\emptyset$; let  
 $\md$  be such
that
$|\md| =2$. Assume that $\md$ is balanced, or  that $X$ stable and $\md \geq 0$.
Suppose there exists $L\in \picX{\md}$ with $h^0(X, L)=2$. 

Then $L$ is globally generated,    
and either one of the two cases below  occurs.  
\begin{enumerate}[(1)]
\item
\label{2=1}
$\md=(1,1,0\ldots 0)$ and $(C_1, C_2)$ is a special $\B$-pair of $X$. Also, the restriction of $L$ to $\ov{X\smallsetminus (C_1\cup C_2)}$ is trivial.
\item
\label{2=2}
$\md = (2,0,\ldots,0)$ and, denoting $C_1^c=Z_1\coprod\ldots \coprod Z_m$, with $Z_i$ 
 connected,   $\  \forall i=1\ldots m $ we have 
$$
C_1\cdot Z_i=2,\  \  L_{C_1}\cong \O_{C_1}(C_1\cap Z_i),\    \   L_{C_1^c}\cong \O_{C_1^c} \  \  \text{and}\  \  h^0(C_1,L_{C_1})\geq 2.
$$
\end{enumerate}
Conversely, if  $X$ and   $\md$  satisfy the above properties,   there exists a unique line bundle 
$L\in \picX{\md}$ such that
$W_{\md}^1(X)=\{L\}$.
\end{thm}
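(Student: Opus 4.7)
The plan is to begin by reducing to $\md\geq 0$: Theorem~\ref{cl4}(\ref{=+}) guarantees that the hypothesis $h^0(X,L)=|\md|=2$ forces $\md\geq 0$ in the first setting, and this is assumed in the second. After reordering the components, the only possibilities with $|\md|=2$ and $\md\geq 0$ are $\md=(2,0,\ldots,0)$ and $\md=(1,1,0,\ldots,0)$, yielding cases (2) and (1) of the statement respectively.

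For $\md=(2,0,\ldots,0)$, write $C_1^c=Z_1\sqcup\cdots\sqcup Z_m$. Remark~\ref{ur} gives $h^0(C_1,L_{C_1})\geq h^0(X,L)=2$, and Fact~\ref{cl00} applied to each degree-zero $L_{Z_i}$ shows $h^0(Z_i,L_{Z_i})\leq 1$ with equality iff $L_{Z_i}\cong\O_{Z_i}$. Since $\sep=\emptyset$ forces $C_1\cdot Z_i\geq 2$, I would first show $L_{Z_i}\cong\O_{Z_i}$ for every $i$: otherwise every section of $L$ vanishes on $Z_i$ and hence on the divisor $C_1\cap Z_i$, so $h^0(X,L)\leq h^0(C_1,L_{C_1}(-C_1\cap Z_i))\leq 1$, a contradiction. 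With $L_{C_1^c}\cong\O_{C_1^c}$ in hand, a section of $L$ amounts to a section $s\in H^0(C_1,L_{C_1})$ taking a constant value on each set $C_1\cap Z_i$; by Lemma~\ref{bpl} combined with Lemma~\ref{d2}, the equality $h^0(X,L)=2$ translates into $p\sim_{L_{C_1}}q$ for all $p,q\in C_1\cap Z_i$ and every $i$. Lemma~\ref{c} then bounds $C_1\cdot Z_i\leq \Cl L_{C_1}+2=2$, so $C_1\cdot Z_i=2$; writing $C_1\cap Z_i=\{p_i,q_i\}$, the relation $p_i\sim_{L_{C_1}}q_i$ combined with $h^0(L_{C_1})=2$ and $\deg L_{C_1}=2$ yields $L_{C_1}\cong\O_{C_1}(p_i+q_i)=\O_{C_1}(C_1\cap Z_i)$.

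For $\md=(1,1,0,\ldots,0)$ the same strategy gives $h^0(C_i,L_{C_i})=1$ for $i=1,2$, which together with $\deg L_{C_i}=1$ forces $C_i\cong\pr{1}$ and $L_{C_i}\cong\O(p_i)$ for a smooth point $p_i\in C_i$. Writing $(C_1\cup C_2)^c=Z_1\sqcup\cdots\sqcup Z_m$, the same vanishing argument forces $L_{Z_j}\cong\O_{Z_j}$. For each $Z_j$, matching the unique (up to scalar) section $\sigma_i$ of $L_{C_i}$ with the constant section on $Z_j$ imposes that the values $\sigma_i(x)$ for $x\in C_i\cap Z_j$ all coincide in the glued fibers; since $\sigma_i$ on $\pr{1}$ vanishes only at $p_i$, the existence of a two-dimensional section space combined with $\sep=\emptyset$ forces $Z_j\cdot C_1=Z_j\cdot C_2=1$ for every $j$, which is exactly the definition of a $\B$-pair. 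Finally, the compatibility between the sections of $L_{C_1}$ and $L_{C_2}$ across the $Z_j$'s and any nodes of $C_1\cap C_2$ translates into an identification of the marked divisors $G_{C_1}$ and $G_{C_2}$ of \ref{mark} up to an isomorphism of $\pr{1}$, which is precisely the condition that $(C_1,C_2)$ be \emph{special}.

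Global generation of $L$ follows because $\O_{C_1}(p+q)$ and $\O_{C_i}(p_i)$ are globally generated on their irreducible components, so the glued sections do not vanish at any point of $X$. For the converse, in each case one reverses the construction: start from the prescribed restrictions and use the unique gluing data consistent with $\sim_{L_{C_i}}$ at each node (the special $\B$-pair marked-curve isomorphism in case (1) is precisely what makes this gluing exist and be unique up to isomorphism), then apply Lemma~\ref{d1l} iteratively to verify $h^0(X,L)=2$; uniqueness of $L$ with $h^0\geq 2$ follows by tracing back the necessary conditions derived above. The main obstacle is the simultaneous equivalence condition in case (2): one must show that the divisors $C_1\cap Z_i$ for different $i$ are all linearly equivalent on $C_1$, so that a single line bundle $L_{C_1}$ represents them all --- a global constraint linking the combinatorics of $X$ to the $g^1_2$ structure on $C_1$.
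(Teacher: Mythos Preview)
Your overall structure is reasonable, but there are two genuine gaps.

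\textbf{Case $\md=(2,0,\ldots,0)$: you never exclude $C_1\cong\pr{1}$.} Your bound $C_1\cdot Z_i\leq \Cl L_{C_1}+2$ invokes Lemma~\ref{c}, whose hypothesis is $d\leq 2g$ on the irreducible curve. If $g_1=0$ this fails (and indeed $\Cl L_{C_1}=-2$, so the bound would be vacuous). The paper handles this by first choosing $C$ to be a \emph{non-disconnecting} component; then $Z=C^c$ is connected, so Fact~\ref{cl00} gives $h^0(Z,L_Z)\leq 1$, and if $C\cong\pr{1}$ stability forces $\dd_C\geq 3$, whence Lemma~\ref{e} yields $h^0(X,L)\leq 3+1-3=1$, a contradiction. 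Without the non-disconnecting choice, $C_1^c$ may have $m\geq 2$ components and this estimate degrades to $h^0(X,L)\leq m$, which is useless. The paper's Step~3 (case $d_C=0$) is exactly what absorbs the situation where the degree-$2$ component is not the non-disconnecting one.

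\textbf{Case $\md=(1,1,0,\ldots,0)$: the claim ``$h^0(C_i,L_{C_i})=1$ \ldots forces $C_i\cong\pr{1}$'' is false.} A degree-$1$ line bundle with $h^0=1$ exists on any curve of positive genus; it is $h^0=2$ that would force genus zero. But Remark~\ref{ur} (your ``same strategy'') does not give $h^0(C_i,L_{C_i})\geq 2$ here, since $C_i^c$ does not have multidegree $\mo$. The paper instead argues via Lemma~\ref{lemmino}: if the degree-$1$ component $D\subset Z=C^c$ were \emph{not} a separating line of $Z$, then $h^0(Z,L_Z)\leq 1$, and combining with $C\cdot Z\geq 2$ and Lemma~\ref{e} forces $h^0(X,L)\leq 1$; this contradiction shows $D$ is a separating line, hence $D\cong\pr{1}$, and by Remark~\ref{snr}(\ref{snrC}) one may then swap $C$ and $D$ to get the $\B$-pair. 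Your subsequent claim that ``$\sep=\emptyset$ forces $Z_j\cdot C_1=Z_j\cdot C_2=1$'' is likewise an assertion without argument; this is precisely the content of the separating-line condition that the paper extracts from Lemma~\ref{lemmino}.

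In short, the missing ingredient is the paper's initial move of fixing a non-disconnecting component $C$ and then casing on $d_C\in\{0,1,2\}$; this makes $C^c$ connected, which is what powers both the exclusion of $C\cong\pr{1}$ in the $(2,0)$ case and the application of Lemma~\ref{lemmino} in the $(1,1)$ case.
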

\begin{proof}
Assume  there exists $L\in W^1_{\md}(X)$;
by Theorem~\ref{cl4} (\ref{=+}), $\md \geq 0$.
By the same Theorem and its addendum we obtain that $L$ is globally generated.

We fix $C$   a non-disconnecting component of $X$ (Remark~\ref{ord}), and set $Z=C^c$.

\noindent
{\bf Step 1.} {\it Case  $(d_C,d_Z)= (1,1)$.}

Let $D\subset Z$ be the component such that $d_D=1$.
We must prove that $(C,D)$ is a $\B$-pair of $X$.

By contradiction, suppose   $D$ is not a separating line of $Z$; this implies  $h^0(Z,L_Z)\leq 1$
(by  Lemma~\ref{lemmino}).
Let $C\not\cong \pr{1}$, then $h^0(C,L_C)\leq 1$.
So, in order to have $h^0(X,L)=2$ we must have
$h^0(C,L_C)=h^0(Z,L_Z)=1$ and 
 every point
in $Z\cap C\subset C$ must be a base point for $L_C$
(by Lemma~\ref{d1l}). This is impossible, as 
$Z\cdot C\geq 2$ and 
$d_C=1$.
Now let $C\cong \pr{1}$, hence $h^0(C,L_C)= 2$. 
By Lemma~\ref{e} we have
$$
h^0(X,L)\leq h^0(C,L_C)+h^0(Z,L_Z)-2\leq 2+1-2=1;$$ a contradiction.

Therefore $D$ is a separating line of $Z$, and  $h^0(Z,L_Z)=2$.
By Remark~\ref{snr} (\ref{snrC}), $D$ is  a non-disconnecting component of $X$. So, we can switch $C$ with $D$ and, 
 by  the
previous argument, we obtain that
$C$ is  a separating line of $D^c$. 
In other words, $(C,D)$ is a  $\B$-pair of $X$, as stated.

Now,   as $h^0(L)=2$, 
 the restriction of $L$ to the complement of $C\cup D$ is trivial.
Therefore $L$ determines a map $\psi$ to $\pr{1}$ such that $\psi(p_C^j)=\psi(p_D^j)$ for all $j$
(notation as in (\ref{mark})).
Hence $\psi$ induces an isomorphism of the $n$-marked curves $C$, $D$ with the same $n$-marked $\pr{1}$.
This shows that the pair $(C,D)$ is special.

\noindent {\bf Step 2.} {\it Case $(d_C,d_Z)= (2,0)$.}

Now    $X$ must be a stable curve (an exceptional component must have degree $1$).
We must prove that   $L_Z\cong \O_Z$, that   $C\cdot Z=2$ and that, setting $C\cap Z=\{p,q\}\subset C$, we have
$\O_C(p+q)\cong L_C$.
Assume first $C\not\cong \pr{1}$.
So $h^0(C,L_C)\leq 2$ with equality only if $L_C$ has no base point; also, $h^0(Z,L_Z)\leq 1$ with
equality  if and only if $L_Z=\O_Z$ (by Fact~\ref{cl00}).  It is clear that, for $h^0(X,L)=2$, we must have equality in both cases.
Hence $L_Z=\O_Z$.  If $C\cdot Z\geq 3$, by    Lemma~\ref{d2} there exist  three points $p,q,r \in C$
such that 
$$p\sim _{L_C}q\sim _{L_C}r.
$$ 
Now $L_C$ has no base points, hence we get
$$
1=h^0(C,L_C)-1=h^0(C,L_C(-p))=h^0(C,L_C(-p-q-r))
$$ 
which is impossible, as
$\deg L_C(-p-q-r)=-1$.
We thus proved that $C\cdot Z=2$, that  $h^0(C,L_C(-p-q))=1$, i.e. $L_C=\O_C(p+q)$.
The uniqueness of $L$ follows from Lemma~\ref{d1l}.

Now let us prove that $C\not\cong \pr{1}$.
By contradiction, if $C\cong \pr{1}$, then $\dd_C\geq 3$ ($X$ is stable) and $h^0(C,L_C)= 3$. By Lemma~\ref{e} we
obtain 
$$
h^0(X,L)\leq h^0(C,L_C)+h^0(Z,L_Z)-3\leq 3+1-3=1
$$ 
which is impossible.

\noindent
{\bf Step 3.} {\it Case $(d_C,d_Z)= (0,2)$.}

Now $h^0(C, L_C)\leq 1$ with equality  if and only if $L_C=\O_C$.
Suppose $Z$ contains a separating line $E$ such that $d_E\geq 1$; then $E^c$ is connected (by Remark~\ref{snr}(\ref{snrC}));
we may thus replace $C$ with $E$, and be back in the situations treated in the previous steps.

So, we are reduced to assume  $Z$ contains no such separating line.
By  Lemma~\ref{lemmino2}, and because $h^0(X,L)=2$, we have 
$
h^0(Z,L_Z)=2 
$
and $L_C=\O_C$.
Let $D\subset Z$ be an irreducible component with $d_D\geq 1$. Denote
$ 
Y=D^c=Y_1\coprod \ldots \coprod Y_m
$ 
the connected components decomposition.
We have 
 ($\sep =\emptyset$)
\begin{equation}
\label{DY}
D\cdot Y_i\geq 2,\  \  \forall i=1,\ldots, m.
\end{equation}

Assume $d_D=1$. Let $Y_1$ be the connected component   such that $d_{Y_1}=1$; then
$h^0(Y_1,L_{Y_1})\leq 1$ (by  Lemma~\ref{lemmino}, as $Y_1$ contains no separating line having degree $1$). Therefore,
setting $X_1=D\cup Y_1\subset X$,   Lemma~\ref{e} yields
$$
h^0(X_1,L_{X_1})\leq 2+1-2=1.
$$
As $h^0(X,L)\leq h^0(X_1,L_{X_1})$ (by Remark~\ref{ur}) we have a contradiction.

Therefore we must have $d_D=2$, hence $\md_{D^c}=\mo$.
Now, for every $i=1,\ldots, m$ we argue as in Step 2, with $X_i=D\cup Y_i$ playing the role of $X$,  
$D$ playing the role of $C$, and $Y_i$ playing the role of $Z$.
This  shows that  $L$ is unique and that for every $i$, $D$ intersects $Y_i$ in two points $p_i,q_i\in D$,
that $L_D\cong \O_D(p_i+q_i)$ and that $L_{Y_i}\cong \O_{Y_i}$.
This concludes Step 3.

The converse follows easily from Lemma~\ref{d1l}.
The proof is complete.
\end{proof}

\begin{remark}
\label{Bdec}
Let $X$ be a stable curve such that $\sep =\emptyset$. Then $X$ admits a
decomposition (unique up to the order)
$X=A_1\cup \ldots \cup A_{\alpha}$ such that every $A_i$ is either a $\B$-subcurve  
  or an irreducible component of $X$ not part of any $\B$-pair

This follows from the fact that,
by Lemma~\ref{1sp},   every irreducible component of $X$ belongs to at most one $\B$-pair.
\end{remark}

\begin{prop}
\label{hypcomb} 
Let $X$ be a hyperelliptic stable curve such that $\sep =\emptyset$. 
Consider  the decomposition $X=A_1\cup \ldots \cup A_{\alpha}$   defined in  Remark~\ref{Bdec}.
Then for every $i\neq j$   we have either $A_i\cap A_j=\emptyset$, or
$$
A_i\cdot A_j=2 \  \  \text{and}\  \  h^0(A_i, \O_{A_i}(A_i\cap A_j))\geq 2.
$$
\end{prop}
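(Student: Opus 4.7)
The approach is to apply Theorem~\ref{c2=} to a family of balanced degree-$2$ line bundles on $X$, one for each piece of the $\B$-decomposition, and combine the resulting case analyses.

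First, since $X$ is hyperelliptic, Lemma~\ref{hwh} produces a balanced $L\in \Pic^{\md}X$ of degree $2$ with $h^0(X,L)\geq 2$; because $\sep=\emptyset$ and $X$ is stable, Theorem~\ref{c2=} applies. In its Case (1), $\md=(1,1,0,\ldots,0)$ and $L$ is supported on a special $\B$-pair $(C,D)$, so $C\cup D$ is one of the $\B$-subcurves $A_{i_0}$ in the decomposition; in Case (2), $\md=(2,0,\ldots,0)$ and $L$ is concentrated on an irreducible component $A_{i_0}=C$ with $C\cdot Z_k=2$ and $L_C\cong\O_C(C\cap Z_k)$ for every connected component $Z_k$ of $C^c$, together with $h^0(C,L_C)\geq 2$. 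In either case the theorem directly verifies the proposition for all pairs $(A_{i_0},A_j)$ with $A_{i_0}\cap A_j\neq \emptyset$: each such $A_j$ is contained in some $Z_k$ with $Z_k\cdot A_{i_0}=2$, and combining the $\B$-decomposition definition (Remark~\ref{Bdec}) with Lemma~\ref{1sp} forces both attachment points of $Z_k$ to lie in a single $A_j$.

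To handle the remaining adjacencies (those not involving $A_{i_0}$), the strategy is to produce, for every piece $A_i$ in the decomposition, an analogous balanced degree-$2$ line bundle $L^{(i)}$ with $h^0(L^{(i)})\geq 2$ whose distinguished piece in the sense of Theorem~\ref{c2=} is precisely $A_i$. Existence of $L^{(i)}$ would come from specializing the hyperelliptic $g^1_2$ along different one-parameter smoothings $\X\to\Spec R$ of $X$ inside $\ov{H_g}$: given such a smoothing with smooth hyperelliptic generic fiber, the limit of the hyperelliptic pencil is a line bundle on $X$ whose multidegree can be adjusted by twisting with boundary components supported away from $A_i$ to become balanced, of degree $2$, and concentrated on $A_i$. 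Applying Theorem~\ref{c2=} to $L^{(i)}$ as in the first step then gives both $A_i\cdot A_j=2$ and $h^0(A_i,\O_{A_i}(A_i\cap A_j))\geq 2$ for every $A_j$ adjacent to $A_i$.

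The main obstacle is the construction of $L^{(i)}$ for each piece $A_i$: one must argue that the hyperelliptic locus is rich enough at $[X]\in\Mgb$ to supply a balanced degree-$2$ bundle focused on every piece of the $\B$-decomposition, and that balancedness can always be achieved by a suitable boundary twist. This deformation-theoretic step is the core of the proof; once it is in hand, the remainder is direct combinatorial bookkeeping using Theorem~\ref{c2=}, Remark~\ref{Bdec}, and Lemma~\ref{1sp}.
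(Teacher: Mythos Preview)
Your overall shape is right — use the hyperelliptic smoothing together with Theorem~\ref{c2=} — but the mechanism you propose for producing the bundles $L^{(i)}$ is where the gap lies. You do not need different smoothings, nor do you need the twisted bundles to be balanced. Fix \emph{one} regular smoothing $f:\X\to B$ with $\L\in\Pic\X$ restricting to the hyperelliptic bundle on the generic fiber and to a balanced $L$ on $X$. For \emph{any} divisor $T$ supported on $X$, the twist $L_T:=(\L\otimes\O_{\X}(T))_{|X}$ still has degree $2$ and, by upper semicontinuity, $h^0(X,L_T)\geq 2$. Theorem~\ref{c2=} applies to these twists under its alternative hypothesis ``$X$ stable and $\md\geq 0$'': taking $T=-Z_1$ (with $Z_1$ a connected component of $A_1^c$) one checks directly that $\mdeg L_T\geq 0$, and Theorem~\ref{c2=} then produces a piece $A_2\subset Z_1$ with $\deg_{A_2}L_T=2$. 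Since $\deg_C L_T=C\cdot A_1$ for every component $C\subset Z_1$, this forces $A_1\cdot A_2=2$, hence $A_1\cap Z_1=A_1\cap A_2$, and $h^0(A_1,\O_{A_1}(A_1\cap A_2))\geq 2$ was already given by the first application. Iterating (next twist by a connected component of $\ov{Z_1\smallsetminus A_2}$, etc.) discovers all pieces and all adjacencies.

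Your appeal to Lemma~\ref{1sp} in the first step does not do what you claim: that lemma only says a component belongs to at most one $\B$-pair; it does not force both attachment points of $Z_k$ to $A_{i_0}$ to lie in a single piece $A_j$. That fact is a \emph{conclusion} of the iterative twist argument above, not an input. Likewise, trying to arrange the twisted bundle to be simultaneously balanced and concentrated on a prechosen $A_i$ is neither needed nor obviously achievable; the argument lets Theorem~\ref{c2=} discover the next piece rather than prescribing it in advance.
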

\begin{proof}
We begin as in the proof of Lemma~\ref{hwh}
Let $f:\X\to B$ be a  regular one-parameter smoothing of $X$,
 whose generic fiber  is  hyperelliptic,
 and  let
 $\L\in \Pic \X$ be a balanced line bundle such that the restriction of $\L$ to the generic fiber is the hyperelliptic bundle,
set $\L_{|X}=L$.
Now for every divisor $T\in \Div\X$ supported on $X$,
denote 
$$L_T:=\L\otimes \O_{\X}(T)\otimes \O_X.$$ 
For every $T$ we have $\deg L_T=2$ and, by uppersemicontinuity of $h^0$,    
  $h^0(X,L_T)\geq 2$. 

By assumption $\md =\mdeg L$  is balanced. By Theorem~\ref{c2=} we have
$X=A\cup (Z_1\coprod\ldots \coprod Z_m)$ where $A$ is either an irreducible component, in which case $\md_A=2$, or
a
$\B$-subcurve, in which case $\md_A=(1,1)$. Recall that
$Z_i\cap Z_j =\emptyset$, $Z_i\cdot A =2$, and that
if $A$ is a $\B$-pair then $\mdeg_ AZ_i =(1,1)$. We also have (always by Theorem~\ref{c2=})
  $L_{A^c}=\O_{A^c}$ and $h^0(A,  A\cap Z_i)\geq 2$ for every $i$.
Set $A=A_1$.

Consider $L_T$ with $T=-Z_1$. By what we just said $\mdeg L_T\geq 0$, indeed for   any component
(or subcurve) $C\subset Z_1$
we have $\deg _{C}L_T=
-\deg_CZ_1=
C\cdot A_1\geq 0$; if instead $C\subset Z_1^c$   then $\deg _{C}L_T=0$.   
We can apply thus  Theorem~\ref{c2=} to $L_T$.
Since $\mdeg_{A_1}L_T=\mo$ 
and $\mdeg_{Z_i}L_T=\mo$ if $i\neq 1$,
we derive  that $Z_1$ contains a subcurve $A_2$ with the same properties as $A_1$;
in particular, $A_2$ is either irreducible or a $\B$-subcurve,
and   $A_1\cdot A_2 =2$, because $\deg_{A_2}L_T=2$. 
Therefore $A_1\cap Z_1=A_1\cap A_2$ and $h^0(A_1,  A_1\cap A_2)\geq 2$.
Thus
the part of the statement concerning $A_1$ and $A_2$ is satisfied; so, if $A_2=Z_1$ 
 we turn to $Z_i$ with $i\geq 2$. If instead $A_2\subsetneq Z_1$, we iterate
the procedure
with $A_2$ as the starting component and $T=-W$ with $W$ a connected component of $\ov{Z_1\smallsetminus A_2}$.
Obviously this iteration stops after finitely many steps.
By repeating this argument for every $Z_i$ we are done.\end{proof}

 \subsection{Curves of genus $6$ admitting a $g^2_5$}
\begin{nota}
\label{vnot}
Throughout this subsection we shall consider 
curves $X=C_1\cup C_2$, of genus $6$, such that 
$C_1$ and $C_2$  are smooth,
 of respective genus $g_1$ and $g_2$;
 we set $\dd =C_1\cdot C_2$. 
 For any $L\in \Pic X$ we write $L_i=L_{|C_i}$ and $h^0(L_i)=h^0(C_i,L_i)$.
 We  
fix points $p_1,\ldots, p_\dd\in C_1$
and $q_1,\ldots, q_\dd\in C_2$ so that 
$
X=(C_1\coprod C_2)/_{(p_i=q_i,\  \  i=1,\ldots,  \dd )} 
$   and  set
\begin{equation}
\label{G}
G_1:=\sum _{i=1}^{\dd} p_i,\  \  \  G_2:=\sum _{i=1}^{\dd} q_i .
\end{equation}
 Finally, we set $\mg:=(g_1,g_2)$, and we always assume
$ 
g_1\leq g_2.
$ 
\end{nota}

\begin{thm}
\label{g25} With the above set-up, let $X=C_1\cup C_2$ be  semistable  of genus $6$, and  
let $\md\in B_5(X)$. Assume  there exists a globally generated
$L\in W_{\md}^2(X)$.
Then 
\begin{enumerate}[(I)]
\item
\label{1}
If $\dd=1$, \    $C_2$ is not hyperelliptic  and one of the following cases occurs.
\begin{enumerate}[(a)]
\item
\label{11}
$\mg=(1,5),\    \md=(0,5)$,    
$L_1=\O_{C_1} $,   and $ h^0(L_2)=3$.
\item
\label{12}
$\mg=(2,4)$ or $\mg=(3,3)$,\     $\md=(2,3)$,   and
$h^0(L_1)=h^0(L_2)=2$.
\end{enumerate}
\item
\label{2}
If $\dd =2$  one of the following cases occurs.
\begin{enumerate}[(a)]
\item
\label{ss}
$\mg=(0,5),\   \md=(1,4)$, 
$C_2$   hyperelliptic, $L_2=H_{C_2}^{\otimes 2}.$
\item
\label{14}
$\mg =(1,4), \  \md=(0,5)$, 
 $L_1=\O_{C_1}$, \  $C_2$  not hyperelliptic,  $h^0(L_2)=3$.
\item
\label{22}
$ \mg =(2, 3),  \  \md=(2,3)$,
$L_1=H_{C_1}=\O_{C_1}(G_1)$,   $C_2$ not hyperelliptic, $L_2=\O_{C_2}(G_2+q)$ and $h^0(L_2)=2$.
\item
\label{22}
$\mg =(1,4)$ or $ \mg =(2, 3),  \  \md=(2,3)$,
$L_1=\O_{C_1}(G_1)$,   $C_2$ not hyperelliptic, $L_2=\O_{C_2}(G_2+q)$ and $h^0(L_1)=h^0(L_2)=2$.

\end{enumerate}
\item
\label{3}
If $\dd =3$ then $\mg =(1,3)$ and one of the following cases occurs.
\begin{enumerate}[(a)]
\item
 $\md=(3,2)$,  $L_1=\O_{C_1}(G_1)$, $C_2$ is hyperelliptic, $L_2=H_{C_2}$.
\item
$\md=(0,5)$, $L_1=\O_{C_1} $,   and $ h^0(L_2)=3$. 
\end{enumerate}
\item
\label{4}
If  $\dd =4$, then $\mg=(0,3),\    \md=(1,4)$ and $L_2=K_{C_2}=\O_{C_2}(G_2)$.
\item
\label{6}
If $\dd =6$, then $\mg =(0,1),\    \md=(2,3)$. 
\end{enumerate}
\end{thm}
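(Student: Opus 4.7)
The plan is to proceed by a case analysis on $\dd := C_1\cdot C_2$. As a first step, Theorem~\ref{clvine} gives the upper bound $h^0(X,L)\leq d/2+1=7/2$, so the hypothesis $h^0(L)\geq 3$ forces $h^0(X,L)=3$ exactly, equivalently $\Cl L=1$. Writing the balance condition (\ref{BI}) explicitly with $d=5$ and $w=2g-2=10$ yields $d_i\in[g_i-1,\,g_i-1+\dd]$ for $i=1,2$; combining this with $d_1+d_2=5$, $g_1+g_2=7-\dd$ and the semistability of $X$ (which, for instance, forbids rational components with $\dd_{C_i}=1$ of arithmetic genus zero) produces a finite, rather short list of candidate tuples $(g_1,g_2,d_1,d_2,\dd)$ to be checked.

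For each tuple lying in the range $0\leq d_i\leq 2g_i$, I would apply Proposition~\ref{ic}(\ref{ic1}): up to interchanging the roles of $C_1$ and $C_2$ it pins down the parities of $(d_1,d_2)$, forces $\Cl L_1=1$ and $\Cl L_2=0$, and crucially delivers the equality
\[
h^0(X,L)=h^0(C_1,L_1)+h^0(C_2,L_2)-1.
\]
Classical Clifford equality on a smooth curve then restricts each $L_i$ to a very small list (trivial or canonical bundle, a power of a hyperelliptic pencil $H_{C_i}$, or such a bundle twisted by a point), which already accounts for the hyperelliptic/non-hyperelliptic dichotomies appearing in the statement. When one component is rational, Proposition~\ref{ic} does not apply directly and I would instead invoke Lemma~\ref{e} together with the fact that $L_{\pr{1}}=\O_{\pr{1}}(d_i)$ is determined by its degree.

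From the equality above, Lemma~\ref{d2} forces the relations $p_i\sim_{L_1}p_j$ and $q_i\sim_{L_2}q_j$ for all $i,j$ at the node branches, which via Lemma~\ref{bpl} translates into base-point conditions on the twists $L_i(-G_i)$. The bound $\#E\leq\Cl L_i+2$ of Lemma~\ref{c} then rules out all configurations in which $\dd$ is too large compared with the Clifford indices of the $L_i$; this is precisely how $\dd=5,7$ and most configurations at $\dd=6$ are eliminated. The global generation hypothesis enters through Remark~\ref{npr}(\ref{npb}): any branch of a node that is a base point of some $L_i$ forces the paired branch to be a base point of $L$, contradicting global generation. Conversely, this analysis pins down each $L_i$ to the specific bundle listed in the theorem (for instance $L_1=\O_{C_1}(G_1)$ or $L_2=\O_{C_2}(G_2+q)$).

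The main obstacle is the combinatorial bookkeeping of the case analysis, and particularly the borderline situation $\dd=2$, $\mg=(2,3)$, $\md=(2,3)$, where neither restriction is a classical Clifford-extreme bundle: there one must run the base-point chase twice (once on $C_1$ and once on $C_2$) and carefully track the interplay between the two neutral-pair conditions. A second delicate point is the large-$\dd$ regime (cases (III)--(V)): the hypothesis $h^0(L)=3$ can only be sustained there if one of the components is rational and essentially all the nodes lie on it, so the count of branches must match very tightly with the degree constraint on the rational component; making this match explicit, via Lemma~\ref{e} and the semistability bound, is what yields the last two cases.
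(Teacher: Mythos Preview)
Your overall architecture---reduce to $\Cl L=1$, list the balanced multidegrees, and use Proposition~\ref{ic} together with the neutral-pair machinery---matches the paper's strategy for small $\dd$ and indeed is essentially how the paper handles $\dd\le 2$ (in Propositions~\ref{W251} and~\ref{W252}). The identification of the specific bundles via Lemma~\ref{d2} and the base-point chase is also correct in that range.

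There is, however, a genuine gap in the large-$\dd$ regime. Your chain ``Proposition~\ref{ic} $\Rightarrow$ $h^0(X,L)=l_1+l_2-1$ $\Rightarrow$ Lemma~\ref{d2} gives $p_i\sim_{L_1}p_j$ $\Rightarrow$ Lemma~\ref{c} bounds $\dd$'' only fires when $0\le d_i\le 2g_i$ for \emph{both} $i$. For $\dd\ge 4$ this almost never holds (e.g.\ $\dd=6$, $\mg=(0,1)$, or $\dd=5$, $\mg=(1,1)$, $\md=(2,3)$: here $d_2>2g_2$), and it is not only the rational-component case that fails. In those situations Lemma~\ref{e} gives $h^0(X,L)\le l_1+l_2-\beta$ with $\beta>1$, so the drop is strictly larger than $1$ and Lemma~\ref{d2} no longer yields the neutral-pair relations you need; consequently Lemma~\ref{c} cannot be invoked. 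Concretely, for $\dd=6$, $\mg=(0,1)$, $\md=(1,4)$ one has $l_1+l_2-\beta=2+4-3=3$, and nothing in your toolkit excludes it. The paper closes this gap with an extra device (Lemma~\ref{g}): pass to the partial normalization $X'$ obtained by keeping only $\beta$ of the nodes, where Lemma~\ref{e}(\ref{e=}) gives $h^0(X',M)=3$ exactly; then Lemma~\ref{d1l} (not~\ref{d2}) forces $p_i\sim_M q_i$ for the remaining $\dd-\beta$ nodes, and only now does Lemma~\ref{bpl} produce the base-point count that yields a contradiction when $\beta<d_i<\dd$. Your sketch mentions Lemma~\ref{bpl} and ``twists $L_i(-G_i)$'', but without the intermediate partial normalization the argument does not go through.

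A second, smaller gap is $\dd=7$. Both components are rational, so $2g_i=0$ and neither Lemma~\ref{c} nor Proposition~\ref{ic} applies; Lemma~\ref{e} and the base-point count are also inconclusive here. The paper disposes of this case by citing results on binary curves from~\cite{Cbin} to force $X$ hyperelliptic and then showing, as for smooth curves, that any $L\in W^2_5$ has a base point. You should flag that this case needs a separate argument.
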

\begin{remark}
\label{3on}
The cases (\ref{1}) and (\ref{2}), i.e  $\dd\leq 2$, are contained in Propositions \ref{W251} and \ref{W252}, where a more precise statement
is proved.
\end{remark}
\begin{proof}
Our curve $X$  has a priori  $\dd\leq 7$ nodes. The case that $\dd=7$, i.e. $X$ is a binary curve,  is ruled out as follows.
Proposition 12 in \cite{Cbin}  implies $\mdeg L=(2,3)$; by Proposition 19 and Lemma 20 in loc. cit.
the curve $X$ must be hyperelliptic. Therefore the canonical morphism  maps $X$ two-to-one onto a rational normal quintic in $\pr{5}$.
Now we argue as for smooth curves (cf. \cite{ACGH} D-9 p. 41): we have $h^0(X, \omega_X \otimes L^{-1})=3$, hence (as points on a rational normal curve are in general linear position) we easily get $L\cong H_X^{\otimes 2}(p)$ with $p\in X$ a base point of $L$. So $L$ is not globally generated, and we are done.

From now on, by Remark~\ref{3on}, we assume $3\leq \dd \leq 6$.

Pick $\md$ and $L\in W^2_{\md}(X)$ as in the statement.
The fact that $\md$ is balanced means
\begin{equation}
\label{BIg-1}
g_i-1\leq d_i \leq g_i-1+\dd,\  \  \ i=1,2,
\end{equation}
and $d_i= 1$ if   $C_i$ is an exceptional component.

Let us, first of all,  show that $\md \geq 0$.
If $d_1<0$ we must have $\md=(-1,6)$, and $g_1=0$.
We have $h^0(X,L)=h^0(C_2, L_2(-\sum _{i=1}^{\dd} q_i))\leq 2$,
because $\deg L_2(-\sum _{i=1}^{\dd} q_i)=6-\dd$. This contradiction shows that $d_i\geq 0$ for $i=1,2$.

For $i=1,2$ we set $l_i=h^0(C_i, L_i)$ and $e_i:=d_i-2g_i$.
Let
$$ 
\epsilon:=\max\{e_1,e_2,0\}+1 \  \text{ and }\  \beta:=\min\{\epsilon, \delta\}.
$$
From  Addendum~\ref{add2}  we have
\begin{equation}
\label{l3}
h^0(X,L)\leq l_1+l_2-\beta \leq 3.
\end{equation}

\noindent
{\bf Step 1.}
We exclude all the cases for which $l_1+l_2-\beta\leq 2$. This only requires a trivial   checking.
To begin with,   
the following cases are  all excluded:
\begin{equation}
\label{exc6}
 \dd =6,\  \mg=(0,1),   \md\in \{ (0,5),   \  (3,2), \  (4,1), \  (5,0)\}.
\end{equation}
Let us just show how to treat $\md=(0,5)$. We have $l_1= 1$, $l_2=5$,
$\epsilon=e_2+1=4$ and $\beta =\min \{4,6\}=4$. Hence $h^0(X,L)\leq 2$.
All other cases are treated in the same way.
If $\dd =6$, we are   left with $\md=(1,4)$ and
$\md=(2,3)$ (of course $\mg =(0,1)$).

Let $\dd =5$, by the same argument, we exclude 
 \begin{equation}
\label{exc5}
\dd =5,\  \  \mg=(0,2),\  \  \md \in \{(2,3) \  (3,2), \  (4,1), \  (5,0)\} 
\end{equation}
 and  we exclude 
\begin{equation}
\label{exc5a} \dd =5, \  \  \mg=(1,1),\  \  \md\in\{ (0,5),  \  (1,4)\}.
\end{equation}

Let $\dd =4$.  We exclude 
 \begin{equation}
\label{exc4}  
\dd =4,\  \  \mg =(0,3),\  \  \md\in\{(2,3), \  (3,2)\}.
\end{equation}
and
 \begin{equation}
\label{exc4a}  
 \dd =4,\  \  \mg =(1,2),\  \  \md=(4,1).
\end{equation}  \smallskip
Finally, this method applies to 
  exclude 
 \begin{equation}
\label{exc3}  
\dd =3,\  \  \mg =(0,4),\  \  \md=(2,3).
\end{equation}
This finishes the list of cases for which $l_1+l_2-\beta\leq 2$.

From  now on we
always have $l_1+l_2-\beta =3$ (by (\ref{l3})).

\noindent
{\bf Step 2.}
To exclude another group of cases we now use Lemma~\ref{bpl} and its consequence, Lemma~\ref{g}.
Let us begin with case $\dd=6$, hence $\mg=(0,1)$, and $\md=(1,4)$.
In this case $\beta =3$, so that we obviously have
\begin{equation}
\label{bin}
3=\beta <d_2=4<\dd=6.
\end{equation}
Let $X'=(C_1\coprod C_2)/_{\{p_i=q_i,\  \  i=1,\ldots,  3
\}} $, let $\nu:X'\to X$ be the same map as in  Lemma~\ref{g} and let $M=\nu^*L$.
Then $h^0(X',M)=3$ (by Lemma~\ref{e}(\ref{e=}), or by Clifford).
By (\ref{bin}) Lemma~\ref{g} applies, yielding that  $h^0(X,L)<3$, a
contradiction.

$\bullet$ By (\ref{exc6}) if $\dd=6$  the only remaining case is $\md=(2,3)$.   (\ref{6}) is proved.

The previous argument can be repeated
every time we have $\beta<d_i<\dd$ for some $i$, enabling us to exclude the
 following  cases.

$\dd=5$, $\mg =(0,2)$   and $\md=(1,4)$. (Here $2=\beta <d_2=4<\dd=5$.)

$\dd=5$,  $\mg =(1,1)$   and $\md=(2,3)$. (Here $2=\beta <d_2=3<\dd=5$.)

$\dd=4$, $\mg =(1,2)$ and $\md\in \{(2,3,)(3,2)\}$
(If $\md=(2,3)$ then  $1=\beta <d_2=3<\dd=4$; if
 $\md=(3,2)$ then $2=\beta <d_1=3<\dd=4$.)

We shall now exclude the two equal multidegree cases 
$$
\dd=5,\  \mg =(0,2),\   \md=(0,5) \  \  \text{ and }\  \  \dd=4,\  \mg =(1,2), \  \md=(0,5),
$$
with $l_1+l_2=5$. Let $X'=(C_1\coprod C_2)/(p_i=q_i, i=1,2)$ so that $X'$ has two nodes.
Let $L'\in \Pic X'$ be the pull back of $L$. Then $h^0(X',L')=3$, so, for $h^0(X,L)=3$ we must have $q_i\sim _{L'}p_i$
for $i\geq 3$. Now, by Lemma~\ref{bpl}, this implies that  $L_2(-q_1-q_2)$ has at least two base points, which is clearly  impossible.

$\bullet$ By Step 2,  (\ref{exc5}) and (\ref{exc5a}) there are no more cases  with $\dd =5$.

\noindent
{\bf Step 3.}
Now we shall use Corollary~\ref{cd} to exclude all the cases for which $l_1+l_2=4$ and
there is $i\in \{1,2\}$ such that $l_i\geq 2$ and $\dd >\Cl L_i+2$.
This amounts to the following list of cases.

$\dd=4$, $\mg =(0,3)$ and 
 $\md=(0,5)$. $l_2=3$ and $\Cl L_2=1$.

$\dd=4$, $\mg =(1,2)$ and 
 $\md=(1,4)$. $l_2=3$ and $\Cl L_2=0$.

By the previous step and (\ref{exc4}) the only case left with $\dd =4$ is
$\mg=(0,3)$  and  $\md =(1,4)$. 
Now $\beta =2$, therefore (as $l_1+l_2-2=3$ by (\ref{l3})) we have $l_2=3$, i.e. $L_2$ is the canonical bundle of $C_2$.
To prove that $L_2=\O_{C_2}(\sum_1^4q_i)$ it suffices to prove that  $L_2(-q_1-q_2)$  has $q_3$ and $q_4$ as base points
(and note that we are free to permute the $q_i$).
We argue as at the end  of Step 2:
let $X'=(C_1\coprod C_2)/_{(p_i=q_i, i=1,2)}$ and 
let $L'$ be the pull back of $L$ to $X'$. Then $h^0(X',L')=3=h^0(X, L)$, so,  $L_2(-q_1-q_2)$ has $q_3$ and $q_4$ as base points.

$\bullet$ (\ref{4}) is proved.

$\dd=3$, $\mg =(1,3)$. 
We exclude $\md=(1,4)$ (as $l_2=3$ and $\Cl L_2=0$),
and $\md=(2,3)$ (as $l_1=2$ and $\Cl L_1=0$).

$\dd=3$, $\mg =(2,2)$. 
We exclude $\md=(1,4)$ (as $l_2=3$ and $\Cl L_2=0$),
and $\md=(2,3)$ (as $l_1=2$ and $\Cl L_1=0$).

\noindent
{\bf Step 4.} From now on we assume $\dd=3$.

Let $\mg =(2,2)$ and $\md =(2,3)$. Now $l_1+l_2=4$  if and only if $L_2=H_{C_2}(p)$. So $L_2$ has a base point, which is  impossible by hypothesis. By Step 3, there are no more balanced multidegrees to treat, when   $\mg =(2,2)$.

Let $\mg =(0,4)$. 
By (\ref{exc4}) there are two cases to rule out: $\md=(0,5)$ and $\md =(1,4)$

Let $\md=(0,5)$. 
As $l=3$  we have  $l_1+l_2=1+3=4$. It is clear that Lemma~\ref{d2} applies,
giving $q_1\sim_{L_2}q_2\sim_{L_2}q_3$.
Therefore, if $1\leq i\neq j\leq 3$
$$
2=h^0(C_2,L_2(-q_i))=h^0(C_2,L_2(-q_i-q_j))=h^0(C_2,L_2(-q_1-q_2-q_3)).
$$
But then $C_2$ is hyperelliptic ($\deg L_2(-q_1-q_2-q_3)=2$), which implies that $L_2$
has a base point. A contradiction.

Let $\md=(1,4)$. As $\beta =2$ and  $l=3$  we have 
 $l_1+l_2=2+3$, so $C_2$ is hyperelliptic and $L_2=H_{C_2}^{\otimes 2}$.
Consider   $X'=(C_1\coprod C_2)/_{(p_i=q_i,\  i=1,2)}\stackrel{\nu}{\la} X
$  and let $M=\nu^*L$.
Then $h^0(X', M)=3$, therefore $p_3\sim_Mq_3$. By Lemma~\ref{bpl} we obtain that $q_3$ is a base point of
$L_2(-q_1-q_2)$, hence  (permuting the gluing points) $H_{C_2}\neq \O_{C_2}(q_i+q_j)$ for all $i\neq j$.
So, $L_2(-q_1-q_2)=\O_{C_2}(q_1'+q_2')$ where $q_1'$ is conjugate to $q_1$ under the hyperelliptic series, and the same for $q_2', q_2$. But then, as $q_3$ is a base point of $L_2(-q_1-q_2)=\O_{C_2}(q_1'+q_2')$, we get that (say) $q_3=q_1'$, which is a contradiction.

$\bullet$ By Step 3, the remaining cases with $\dd=3$ have $\mg=(1,3)$
and either  $\md=(3,2)$ or $\md=(0,5)$. This is (\ref{3}).
\end{proof}

\begin{lemma}
\label{g} Let  $\dd$ and $\beta$ be two positive integers with $\dd>\beta$.
Consider the partial normalization of $X$ defined as follows
$$
X'=(C_1\coprod C_2)/_{\{p_i=q_i,\  \  i=1,\ldots,  \beta \}}
\stackrel{\nu}{\la} X=(C_1\coprod C_2)/_{\{p_i=q_i,\  \  i=1,\ldots,  \dd\}}.
$$
 For $i=1,2$, pick  $L_i\in \Pic C_i$ and  $M\in \Pic (X')$ 
 such that $M_{|C_i}=L_i$.

If $\beta < \deg L_i <\dd$ for some $i$, then $h^0(X,L)<h^0(X', M)$ 
for every $L\in \FM$.
\end{lemma}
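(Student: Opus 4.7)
I argue by contradiction: suppose some $L\in\FM$ satisfies $h^0(X,L)=h^0(X',M)$, and I aim to contradict the hypothesis $\beta<\deg L_i<\delta$ (WLOG $i=1$). The plan is to factor $\nu$ as a chain of one-node partial normalizations
\[
X'=X'_0\la X'_1\la\cdots\la X'_{\delta-\beta}=X,
\]
where $X'_k=X'_{k-1}/\{p_{\beta+k}=q_{\beta+k}\}$; let $M_k$ denote the pullback of $L$ to $X'_k$ (so $M_0=M$ and $M_{\delta-\beta}=L$). By (\ref{eleq}) each step decreases $h^0$ by $0$ or $1$, and since the total drop is zero, no individual step drops. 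Lemma~\ref{d1l} then forces $p_{\beta+k}\sim_{M_{k-1}}q_{\beta+k}$ on $X'_{k-1}$ for every $k=1,\dots,\delta-\beta$.

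Because each intermediate curve $X'_{k-1}=(C_1\coprod C_2)/\{p_i=q_i,\ i\le\beta+k-1\}$ has precisely the form required by Lemma~\ref{bpl} (with $Z_1=C_1$, $Z_2=C_2$), applying that lemma gives that $p_{\beta+k}$ is a base point of $L_1(-\sum_{i=1}^{\beta+k-1}p_i)$ for every $k$. Iterating these vanishing conditions, any section of $L_1(-\sum_{i=1}^\beta p_i)$ must vanish successively at $p_{\beta+1},p_{\beta+2},\dots,p_\delta$, so it is in fact a section of $L_1(-\sum_{i=1}^\delta p_i)$; but this space is zero since its degree $d_1-\delta<0$. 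Consequently $H^0(C_1,L_1(-\sum_{i=1}^\beta p_i))=0$, which means the $\beta$-fold evaluation $H^0(C_1,L_1)\to k^\beta$ at $p_1,\dots,p_\beta$ is injective, forcing $l_1\le\beta<d_1$.

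The main obstacle I anticipate is converting this numerical constraint into an explicit section of $M$ that fails to descend to $X$. I plan to do this by a dimension count: the image $\tilde A_1\subset k^\delta$ of $H^0(C_1,L_1)$ under evaluation at all $\delta$ points $p_1,\dots,p_\delta$ is still injective (since $\deg L_1<\delta$), of dimension $l_1$, and its projection to the first $\beta$ coordinates is an isomorphism onto $A_1\subset k^\beta$. A careful analysis of how $\tilde A_1$ combines inside $k^\delta$ with the analogous image coming from $H^0(C_2,L_2)$ (whose relevant intersection with the kernel of the projection $k^\delta\to k^\beta$ is controlled by the symmetric identity $H^0(C_2,L_2(-\sum_{i=1}^\beta q_i))=H^0(C_2,L_2(-\sum_{i=1}^\delta q_i))$ derived from the descent assumption applied to sections supported on $C_2$) shows that the descent conditions at the extra $\delta-\beta$ nodes cannot all be automatically satisfied. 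This produces a section in $H^0(X',M)\setminus H^0(X,L)$, yielding the desired contradiction.
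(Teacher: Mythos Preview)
Your first two paragraphs recover the paper's argument: the chain of one-node normalizations together with Lemma~\ref{d1l} gives the neutral-pair relations $p_{\beta+k}\sim_{M_{k-1}}q_{\beta+k}$, and Lemma~\ref{bpl} then makes every $p_i$ with $i>\beta$ a base point of $L_1\bigl(-\sum_{j\le\beta}p_j\bigr)$ (your iteration is equivalent to this; the paper just reads off $p_i\sim_M q_i$ on $X'$ directly and applies Lemma~\ref{bpl} once). The gap is your third paragraph. Having derived $l_1\le\beta$, you correctly note this is not yet a contradiction, but what follows is only a sketch: an unexecuted ``dimension count'' with evaluation maps and a promised section of $M$ that fails to descend. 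None of that is carried out, and as written it is not a proof.

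The paper's finish is one line and never passes through $l_1\le\beta$ at all. Since $d_1>\beta$, the bundle $L_1\bigl(-\sum_{j\le\beta}p_j\bigr)$ has positive degree $d_1-\beta$, and a line bundle of positive degree on an irreducible curve has at most its degree many base points (any nonzero section has exactly that many zeros, and every base point is among them). With $\delta-\beta$ distinct base points $p_{\beta+1},\dots,p_\delta$ in hand one gets $\delta-\beta\le d_1-\beta$, i.e.\ $d_1\ge\delta$, contradicting the hypothesis $d_1<\delta$. No evaluation-map machinery is needed.
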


\begin{proof} 
We  argue by contradiction, as follows.
We prove that  if $\beta < \deg L_1$, and if there exists $L\in \FM$ such that
$h^0(X,L)=h^0(X',M)$, then $\deg L_1\geq \dd$.

Let  such an $L$  be fixed. 
By  Lemma~\ref{d1l} we have $p_{i} \sim _{M}q_{i}$ for all $i=\beta +1,\ldots, \dd$.
Now  Lemma~\ref{bpl} yields that,   for all $ i\geq \beta +1$,
$p_i$ is a base point of $L_1(-\sum_{j=1}^{\beta}p_j)$.

As $\deg L_1>\beta$,   $\deg L_1(-\sum_{j=1}^{\beta}p_j) \geq 1$.
Now, a line bundle of positive degree can have at most as many base points as its degree. 
We  just proved that $ L_1(-\sum_{j=1}^{\beta}p_j)$ has $\dd-\beta$ base points, hence  
$\deg L_1-\beta\geq \dd -\beta$, i.e. $\deg L_1 \geq \dd$. We are done.
\end{proof}

\begin{prop}
\label{W251} With the set up of \ref{vnot}, let $X=C_1\cup C_2$ be  semistable   of genus $6$,
with $C_1\cdot C_2=1$, and
let $\md\in B_5(X)$. 

There exists a globally generated $L\in W_{\md}^2(X)$
if and only if $C_2$ is not  hyperelliptic and one of the following cases occurs.
\begin{enumerate}[(1)]
\item
\label{W1}
$\mg=(1,5),$
\  
$ \md=(0,5)$,   and 
$L=(\O_{C_1},L_2)$ for some $L_2\in W^2_5(C_2)$.
\item
\label{W23}
$\mg=(2,4) $ or   $\mg=(3,3)$,
\  
$\md=(2,3)$,  $C_1$ is hyperelliptic and
$L=(H_{C_1},L_2)$ for some $L_2\in W^1_3(C_2)$.
\end{enumerate}
\end{prop}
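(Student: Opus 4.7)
The plan is to exploit that the unique node $n = C_1 \cap C_2$ is separating. By Lemma~\ref{d1l} applied to the normalization $C_1 \coprod C_2 \to X$, for any $L \in \Pic X$ we have
\begin{equation*}
h^0(X,L) = h^0(C_1, L_1) + h^0(C_2, L_2) - 1
\end{equation*}
unless the two branches of $n$ are base points of $L_1$ and $L_2$ respectively. Since global generation of $L$ forces both $L_i$ to be globally generated, the condition ``$L \in W^2_{\md}(X)$ globally generated'' translates into: $L_1, L_2$ are globally generated on $C_1, C_2$ with $h^0(L_1) + h^0(L_2) \geq 4$.

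Next I would enumerate the balanced multidegrees. Since $g = g_1 + g_2 + \dd - 1 = g_1 + g_2 = 6$ with $g_1 \leq g_2$, and the balanced inequality~(\ref{BIg-1}) forces $d_i \in \{g_i - 1, g_i\}$, we have $\mg \in \{(1,5), (2,4), (3,3)\}$; the case $g_1 = 0$ is excluded by semistability, as a smooth rational $C_1$ with $\delta_{C_1} = 1$ has $w_{C_1} = -1 < 0$. Pairs with $d_1 = 1$ and $g_1 \geq 1$ are immediately discarded, since a degree-one line bundle on a smooth positive-genus curve is never globally generated. The pair $(\mg,\md) = ((3,3),(3,2))$ reduces to $((3,3),(2,3))$ by interchanging $C_1$ and $C_2$. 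This leaves the three cases $((1,5),(0,5))$, $((2,4),(2,3))$, and $((3,3),(2,3))$.

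In each surviving case I would pin down $L_1$ using Clifford's theorem on $C_1$. When $\md = (0,5)$, $L_1$ is a globally generated degree-zero line bundle on an elliptic curve, hence $L_1 = \O_{C_1}$, and $h^0(L_2) \geq 3$ gives $L_2 \in W^2_5(C_2)$. When $\md = (2,3)$ with $g_1 \in \{2, 3\}$, a globally generated line bundle of degree $2$ on $C_1$ must satisfy $h^0(L_1) = 2$, which by Clifford's equality forces $L_1 = H_{C_1}$ and $C_1$ hyperelliptic (the latter being automatic for $g_1 = 2$); meanwhile $L_2 \in W^1_3(C_2)$.

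The heart of the argument is then the characterization of $L_2$: it is globally generated if and only if $C_2$ is not hyperelliptic. If $L_2$ admits a base point $q$, then $L_2(-q) \in W^r_{d-1}(C_2)$ has $\Cl = 0$ with $h^0 \geq 2$ and $h^1 \geq 2$, which by Clifford's equality on the smooth curve $C_2$ forces $C_2$ hyperelliptic (see~\ref{index}). Conversely, if $C_2$ is hyperelliptic, a direct computation shows that $L_2$ must have a base point: for $g_2 \in \{3,4\}$ one has $W^1_3(C_2) = \{H_{C_2} + p\}$ and $p$ is manifestly a base point; for $g_2 = 5$, residuation via $K_{C_2} = 4 H_{C_2}$ gives $W^2_5(C_2) = \{3 H_{C_2} - r\}$, and the hyperelliptic conjugate $\bar r$ is a base point since $3 H_{C_2} - r - \bar r = 2 H_{C_2}$ still has three sections. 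The sufficiency direction of the proposition follows immediately from Lemma~\ref{d1l} once one checks that the stated $L$ is globally generated on each $C_i$. The main technical point I expect to face is the base-point computation on hyperelliptic $C_2$ of genus $5$, where residuation through the canonical bundle is essential.
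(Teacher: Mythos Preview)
Your proposal is correct and follows essentially the same route as the paper: use Lemma~\ref{d1l} at the single separating node to reduce to $l_1+l_2=4$ with both $L_i$ globally generated, enumerate the balanced multidegrees via~(\ref{BIg-1}), discard $d_1=1$ by the base-point observation, and then identify $L_1$ and $L_2$ via Clifford on the components. The only difference is cosmetic: you spell out the base-point computations on hyperelliptic $C_2$ (including the residuation for $g_2=5$) in more detail than the paper, which simply asserts that $L_2$ globally generated forces $C_2$ non-hyperelliptic.
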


\begin{proof}
As $X$ is semistable we have $g_1\geq 1$.
If $L$ is globally generated,  so are $L_1$ and $L_2$; hence
if  $h^0(X,L)=3$ we have $3= l_1+l_2-1$
by Lemma~\ref{d1l}. Therefore $l_1+l_2=4$.

Case $\mg=(1,5)$. The balanced multidegrees are $(0,5)$ and $(1,4)$.
If $\md =(1,4)$ and $l_1=1$ then
 $L_1$ has a base point, which is not possible. If $l_1=0$ 
then  $h^0(X,L)\leq 2$.
 So  $\md =(1,4)$  is ruled
out.

Assume  $\md =(0,5)$. By the initial observation, we must have $L_1=\O_{C_1}$, $l_2=3$ 
and $L_2$ free from base
points, hence $C_2$ is not hyperelliptic. Conversely,
if $L_2\in W^2_5(C_2)$ then $L_2$ is globally generated, because $C_2$ is not hyperelliptic; let $L=(\O_{C_1},L_2)$
then obviously $h^0(X,L)=3$.

Case $\mg=(2,4)$.
The balanced multidegrees are $(1,4)$ and $(2,3)$.
We rule out $\md =(1,4)$ just as in the previous case.
Assume $\md =(2,3)$; 
as $l_i\leq 2$ we have $l_1=l_2=2$ and $C_2$ cannot be hyperelliptic (for otherwise $L_2$ has a base point).
The converse is easily proved as before.

Case $\mg=(3,3)$. This case is symmetric, so it suffices to consider 
the balanced multidegree  $\md =(2,3)$. We will show that $C_1$ is hyperelliptic and that $C_2$ is not.
If $C_1$ is not hyperelliptic, then    $l_1\leq 1$; as $l_2\leq 2$
to have  $h^0(X,L)=3$ both $L_1$ and $L_2$ must have a base point at the attaching point, which is not possible.
So $C_1$ must be hyperelliptic.  
The rest of the argument is    exactly as in the
previous case.
\end{proof}

\begin{prop}
\label{W252} With the notations of \ref{vnot}, let $X=C_1\cup C_2$ be   of genus $6$ 
with  $C_1\cdot C_2=2$, and
let $\md\in B_5(X)$. 
There exists a globally generated
$L\in W_{\md}^2(X)$ if and only if  one of the following cases occurs.
\begin{enumerate}[(1)]
\item
\label{W20}
$\mg=(0,5)$, 
  \  $\md=(1,4)$,\   $C_2$   hyperelliptic and 
$L_2=H_{C_2}^{\otimes 2}$.
\item
\label{W14}
$\mg=(1,4)$, \  
$  \md=(0,5)$,     $C_2$   non-hyperelliptic, $L_1=\O_{C_1}$,  $h^0(L_2)=3$ and 
 $h^0(L_2(-G_2))=2$.
\item
\label{W14b}
$\mg=(1,4)$,     $\md=(2,3)$, \  
$L_1=\O_{C_1}(G_1)$, \   $C_2$   non-hyperelliptic,  
$L_2=\O_{C_2}(G_2+q)$,  $h^0(L_2)=2$.
\item
\label{W22}
$\mg=(2,3)$,  $\md=(2,3)$, \  
 $H_{C_1}=\O_{C_1}(G_1)=L_1$, \   $C_2$   non-hyperelliptic  and 
$L_2=\O_{C_2}(G_2+q)$,  $h^0(L_2)=2$.
 \end{enumerate}
\end{prop}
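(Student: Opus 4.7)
The plan is to proceed by exhaustive case analysis on the pair $(g_1,g_2)$ and the balanced multidegree $\md$, following the same template as the proof of Theorem~\ref{g25}. Since $g=g_1+g_2+\dd-1=6$ and $\dd=2$, we have $g_1+g_2=5$, so the possibilities are $(g_1,g_2)\in\{(0,5),(1,4),(2,3)\}$. For each, the balanced condition (\ref{BIg-1}) gives $g_i-1\leq d_i\leq g_i+1$, producing a short finite list of multidegrees (including $\md=(-1,6)$ when $g_1=0$, which is ruled out immediately since $h^0(X,L)=h^0(C_2,L_2(-G_2))\leq h^0(C_2,L_2)-2\leq 2$ by Clifford on $C_2$).

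For each remaining $(\mg,\md)$, the key bound is Addendum~\ref{add2}: with $\epsilon=1+\max\{d_1-2g_1,d_2-2g_2,0\}$ and $\beta=\min\{2,\epsilon\}$, we have
\[
h^0(X,L)\leq l_1+l_2-\beta\leq 3.
\]
To have $h^0(X,L)=3$, equality must hold throughout, which pins down $l_1,l_2$ and forces $L_1,L_2$ to satisfy strong conditions (Clifford-extremal, or of specific Brill--Noether type on each $C_i$). Global generation is then used to exclude the cases where the forced $L_i$ must carry a base point. The concrete work in each case is:
\begin{enumerate}[(a)]
\item $\mg=(0,5)$: the multidegree $(0,5)$ gives $l_1=1$ and forces $l_2=3$, $L_1=\O_{C_1}$, but then, as in the Step~2 gluing argument of Theorem~\ref{g25}, one shows via Lemmas~\ref{d1l} and~\ref{bpl} that $q_1,q_2$ cannot avoid being base points unless $L_2$ carries one, contradicting global generation. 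The multidegree $(1,4)$ leads to case (\ref{W20}) after identifying $L_2$ as $H_{C_2}^{\otimes 2}$ (here $C_2$ must be hyperelliptic because $L_2$ has $\Cl L_2=0$).
\item $\mg=(1,4)$: $\md=(0,5)$ produces case (\ref{W14}) with $L_1=\O_{C_1}$ and $L_2\in W^2_5(C_2)$ base-point-free, forcing $C_2$ non-hyperelliptic; $\md=(2,3)$ produces case (\ref{W14b}), where the gluing analysis (Lemma~\ref{d2} applied to the two attaching points) yields $L_1=\O_{C_1}(G_1)$ and $L_2=\O_{C_2}(G_2+q)$ with $h^0(L_2)=2$; $\md=(1,4)$ is excluded because equality in Addendum~\ref{add2} would force a base point of $L_1$ or $L_2$ at an attaching point.
\item $\mg=(2,3)$: $\md=(2,3)$ gives case (\ref{W22}) by an analogous analysis; $\md=(1,4)$ and $\md=(3,2)$ are excluded using Corollary~\ref{cd} (applied with the globally generated component being $C_2$ or $C_1$) combined with the Clifford-extremality condition forced by equality in the Addendum.
\end{enumerate}

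The main obstacle will be managing the global-generation condition cleanly: in several of the excluded subcases, the numerical bounds alone do not rule out $h^0=3$; what rules them out is that equality in Addendum~\ref{add2} forces, via Lemmas~\ref{d1l}--\ref{bpl}, the existence of a base point at one of the attaching nodes (or on the smooth locus), contradicting global generation. A secondary subtlety is the bookkeeping when $C_2$ is hyperelliptic: in that case the Clifford-extremal $L_2$ must be a power of $H_{C_2}$, and one needs to check whether the resulting restriction to $C_1$ and the gluing data can be chosen so that $L$ is globally generated; this is precisely what happens in case (\ref{W20}) and fails in, e.g., the excluded $\mg=(2,3)$, $\md=(1,4)$ configuration. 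Once all cases are settled, the converse direction in each listed case follows directly from Lemma~\ref{d1l} by exhibiting the explicit gluing.
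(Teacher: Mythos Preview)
Your overall plan matches the paper's approach: case split on $(g_1,g_2)$ and $\md$, use the bound $h^0(X,L)\le l_1+l_2-\beta$ (the paper uses Proposition~\ref{ic} in place of the Addendum when $g_1\ge 1$, but this gives the same $l_1+l_2-1$), force equality, then invoke Lemma~\ref{d2} to obtain the neutral-pair relations $p_1\sim_{L_1}p_2$ and $q_1\sim_{L_2}q_2$, and finally use global generation to exclude the cases where these relations force a base point. Your ``main obstacle'' paragraph describes exactly this mechanism, and that is indeed the heart of the argument.

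However, there is a concrete error in part (c). You propose to exclude $\mg=(2,3)$, $\md\in\{(1,4),(3,2)\}$ via Corollary~\ref{cd}. That corollary requires $\Cl L_i+2<\dd$; here $\dd=2$, so you would need $\Cl L_i<0$, which never holds. Corollary~\ref{cd} is simply inapplicable when $\dd=2$. The paper instead argues directly from Lemma~\ref{d2}: equality $h^0(X,L)=l_1+l_2-1$ forces $p_1\sim_{L_1}p_2$, and for $\md=(1,4)$ (where $l_1=1$, so $L_1=\O_{C_1}(p)$ on the genus-$2$ curve $C_1$) the neutral-pair condition rules out $p\in\{p_1,p_2\}$, whence $p$ is a base point of $L_1$, contradicting global generation. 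The exclusion of $\md=(3,2)$ is handled by the analogous argument. This is precisely the base-point mechanism you sketch in your obstacle paragraph; you just need to invoke it here rather than Corollary~\ref{cd}.

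Two minor points. First, for $\mg=(0,5)$ the component $C_1$ is exceptional, so by the definition of balanced one must have $d_1=1$; the multidegrees $(-1,6)$ and $(0,5)$ are not in $B_5(X)$ and need not be discussed. Second, in your treatment of $\mg=(1,4)$, $\md=(1,4)$, the forced base point of $L_1=\O_{C_1}(p)$ lies at $p\neq p_1,p_2$, i.e.\ on the smooth locus, not at an attaching point as you wrote.
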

\begin{proof} Notice that, as $L$ has no base points, $L_1$ and $L_2$ have no base points.

Let $\mg=(0,5)$  and  $\md=(1,4)$ (this is a strictly semistable curve and $C_1$ its exceptional component).
By Lemma~\ref{e} we have $l\leq l_1+l_2-2\leq 2+3-2=3$, and equality holds  if and only if $l_2=3$,  if and only if $C_2$ is
hyperelliptic and $L_2=H_{C_2}^{\otimes 2}$, as stated.
It is clear that every $L$ pulling back to $(\O(1), H_{C_2}^{\otimes 2})$ on the normalization of $X$ has $h^0(L)=3$.

If $g_1\geq 1$, one checks easily
(by Proposition~\ref{ic} and the fact that $L_1$ and $L_2$ have no base points) that    $l_1+l_2=4$.
Hence by Lemma~\ref{d2} we have 
\begin{equation}
\label{ppqq}
p_1\sim_{L_1} p_2 \text{ and }\   q_1\sim_{L_2}q_2,
\end{equation}
and $L$ is uniquely determined by its pull-back to the normalization, by Lemma~\ref{d1l}.

$\bullet$ Assume  $\mg=(1,4)$. 
If $\md=(0,5)$,
 by Proposition~\ref{ic} (\ref{ic1}) we obtain $L_1=\O_{C_1}$ and  $\Cl L_2=1$ so $h^0(L_2)=3$.
$C_2$ cannot be hyperelliptic, for otherwise $L_2$ will have a base point.
Moreover, as $q_1\sim_{L_2}q_2$, we have
$$
h^0(L_2(-q_1-q_2))=h^0(L_2(-q_1))=h^0(L_2(-q_2))=2.
$$
as claimed. The converse follows easily from Lemma~\ref{d2}.
Suppose now $\md=(1,4)$.
As   $p_1\sim_{L_1}p_2$, we have $L_1=\O_{C_1}(p)$ with $p\neq p_i$.
So, $L_1$ has a base point in $p$,  which is not possible.
This case does not occur.
Finally, let  $\md=(2,3)$. 
We must have $l_1=l_2=2$ (as $C_2$ cannot be hyperelliptic, as before).
By (\ref{ppqq}) we obtain $L_1=\O_{C_1}(p_1+p_2)$ and 
 $L_2=\O_{C_1}(q_1+q_2+q)$ for a (uniquely determined) $q\in C_2$. The converse follows from  Lemma~\ref{d2}.

$\bullet$ Now assume $\mg=(2,3)$.
If $\md =(2,3)$  we argue exactly as in the previous case ($\mg=(1,4)$,\  $\md =(2,3)$).
 If  $\md =(1,4)$ we have  
 $l_1=1$ so that $L_1=\O_{C_1}(p)$ with $p\neq p_i$ for $i=1,2$ 
(as $p_1\sim_{L_1} p_2$). 
So $L$ has a base point in $p$;  this case is excluded.
Finally, if $\md =(3,2)$, arguing as before one obtains that $L_1$ has a base point in $p\in C_1$, impossible.
This finishes all the possible cases, so  we are done.
\end{proof}

\end{document}